\numberwithin{equation}{section}
\newtheorem{thm}{Theorem}[section]
\newtheorem{lemma}[thm]{Lemma}
\newtheorem{assumption}[thm]{Assumption}
\newtheorem{cor}[thm]{Corollary}
\newtheorem{prop}[thm]{Proposition}
\theoremstyle{definition}
\newtheorem{definition}[thm]{Definition}
\theoremstyle{remark}
\newtheorem{remark}[thm]{Remark}
\newcommand{\comment}[1]{}
\newcommand{\f}{{\bm f}}
\newcommand{\A}{\bA}
\newcommand{\T}{\bT}
\newcommand{\VV}{\mathscr{V}}
\newcommand{\QQ}{\mathscr{Q}}
\newcommand{\LL}{\mathfrak{L}}
\newcommand{\NN}{\mathfrak{N}}
\newcommand{\WW}{\mathfrak{W}}
\newcommand{\p}{\partial}
\newcommand{\Div}{{\rm div}\,}
\newcommand{\bcurl}{{\mathbf{curl}}\,}
\newcommand{\bL}{\bm L}
\newcommand{\bu}{\bm u}
\newcommand{\bw}{\bm w}
\newcommand{\bv}{\bm v}
\newcommand{\bn}{\bm n}
\newcommand{\bt}{\bm t}
\newcommand{\bH}{\bm H}
\newcommand{\bp}{\bm p}
\newcommand{\btau}{\bm \tau}
\newcommand{\bA}{\bm A}
\newcommand{\bT}{\bm T}
\newcommand{\bq}{\bm q}
\newcommand{\Ome}{\Omega}
\newcommand{\bkappa}{\bm \kappa}
\newcommand{\bsigma}{\bm \sigma}
\newcommand{\calT}{\mathcal{T}}
\newcommand{\bI}{\boldsymbol{I}}
\newcommand{\bPi}{\bm \Pi}
\newcommand{\bbP}{\mathbb{P}}
\newcommand{\pol}{\mathcal{P}}
\newcommand{\bpol}{\boldsymbol{\pol}}
\newcommand{\bpsi}{\bm \psi}
\newcommand{\bbR}{\mathbb{R}}
\newcommand{\bs}{\bm s}
\newcommand{\calS}{\mathcal{S}}
\newcommand{\bN}{\bm N}
\newcommand{\calV}{\mathcal{V}}
\newcommand{\bbeta}{\bm \beta}
\newcommand{\MJN}[1]{\textcolor{black}{#1}}
\newcommand{\MN}[1]{\textcolor{black}{#1}}
\newcommand{\SG}[1]{\textcolor{black}{#1}}
\title[Convergence of Lagrange FEMs for 3D Maxwell Eigenvalue Problem]{Convergence of Lagrange \MN{Finite Element Methods} for Maxwell Eigenvalue Problem in 3D}
\author[D. Boffi]{Daniele Boffi}
\address{University of Science and Technology, Saudi Arabia and
University of Pavia, Italy}
\email{daniele.boffi@kaust.edu.sa}
\author[S. Gong]{Sining Gong}
\address{Division of Applied Mathematics, Brown University, Providence, RI 02912 }
\email{sining\_gong@brown.edu}
\author[J. Guzm\'an]{Johnny Guzm\'an}
\address{Division of Applied Mathematics, Brown University, Providence, RI 02912 }
\email{johnny\_guzman@brown.edu}           %  \\
\thanks{The first author is partially supported by IMATI/CNR and by PRIN/MIUR. 
The third author was supported in part by NSF grant DMS--1913083.  The fourth
author was supported in part by NSF grant DMS--2011733.}
\author[M. Neilan]{Michael Neilan}   
\address{Department of Mathematics, University of Pittsburgh, Pittsburgh, PA 15260}
\email{neilan@pitt.edu}
\begin{document}

\maketitle

\begin{abstract}
    We  prove convergence of the Maxwell eigenvalue problem using quadratic or higher Lagrange finite elements on Worsey-Farin splits in three dimensions. To do this, we construct two Fortin-like operators to prove uniform convergence of the corresponding source problem. We present numerical experiments to illustrate the  theoretical results.  
\end{abstract}

\thispagestyle{empty}

\section{Introduction} 
It is well known that, in contrast to  N\'ed\'elec edge elements \cite{MFE1980}, the direct use of Lagrange finite elements \MN{fail} to approximate Maxwell's eigenvalue problem, 
as they lead to erroneous solutions on generic triangulations (see for example \cite{FE2006, FE2010}). 
%In fact, this is one of the reasons N\'ed\'elec constructed his widely known edge elements that give reliable approximations to this problem. 
However, Wong and Cendes \cite{wong1988combined} numerically show Lagrange \MN{finite element methods (FEMs)} give the correct approximations on certain meshes. In particular, in two dimensions they show that the use of linear Lagrange finite element spaces defined on Powell-Sabin \cite{powell1977piecewise} meshes 
lead to accurate approximations. Likewise, in \cite[Example 3]{wong1988combined} they demonstrate that the use of 
quadratic Lagrange finite element spaces on ``consistent  tetrahedral meshes'' in three dimensions lead to correct approximations of Maxwell's eigenvalue problem. Although Wong and Cendes do not explicitly define ``consistent tetrahedral meshes'', it is reasonable
to assume they are referring to the three-dimensional analogue of Powell-Sabin triangulations, in particular, 
Worsey-Farin  meshes \cite{worsey1987ann} which we recall below. 

Recently in \cite{CLM2021} we theoretically justified the numerical experiments of Wong and Cendes \cite{wong1988combined} in two dimensions. 
We  proved that indeed linear (and higher) Lagrange elements on Powell-Sabin triangulations yield
discrete eigenvalues that converge to the true eigenvalues as the mesh parameter tends to zero.
The theory also shows the convergence of discrete eigenvalues
using quadratic (and higher) Lagrange elements on Clough-Tocher splits \cite{clough1965finite},
as well as quartic (and higher) Lagrange elements 
 on general meshes without nearly singular vertices.  Similar to 
 the two families of N\'ed\'elec edge elements, the spaces mentioned above fit into a discrete de Rham sequence (see for example \cite{guzman2020exact}). To prove convergence of the eigenvalue problem is suffices to prove uniform estimates of the solution operator of the corresponding source problem. One main tool in two dimensions  was the construction of a Fortin-like operator \cite{CLM2021}.

The present paper can be considered a continuation of paper \cite{CLM2021},
where we consider Lagrange elements in three dimensions on Worsey-Farin triangulations 
on a contractible, polyhedral domain. 
We, again, mathematically justify the numerical experiments of Wong and Cendes \cite{wong1988combined} and prove that the use of 
quadratic (or higher) Lagrange elements on Worsey-Farin
splits lead to accurate approximations to Maxwell's eigenvalue 
problem. The present analysis in three dimension is more involved than the two dimensional analysis given in \cite{CLM2021}. In particular, we need to develop two Fortin-like operators whereas in \cite{CLM2021} only one was needed. To do this,  we exploit that Lagrange elements on Worsey-Farin splits 
also fit into an a discrete de Rham sequence 
\cite{Ex2020}. Then, using the degrees of freedom given in 
\cite{Ex2020} we  construct a Fortin-like  operators for both curl and divergence operators.  In order to prove that these operators are bounded, we 
use certain embeddings that hold on Lipshitz polyhedral domains; 
see Section \ref{embeddings}. 

To the best of our knowledge, this seems to be the first paper theoretically justifying convergence of Lagrange elements on simplicial meshes in three dimensions 
without modifying the bilinear form. In contrast, several papers prove convergence of Lagrange elements, where they add penalization or regularization terms to the bilinear form; see \cite{bonito2011approximation,badia2012nodal, buffa2009solving, du2020mixed, duan2019new, duan2019family}. Parallel work by Hu et al. \cite{christiansen2018generalized, hu2021spurious, hu2022partially} also develop finite elements on different splits with Lagrange or partially discontinuous elements while leaving the bilinear forms unchanged. In particular, in  \cite{hu2022partially}  partially discontinuous elements where applied to the Maxwell eigenvalue problem in three dimensions on Worsey-Farin splits and they show convergence numerically.

We provide numerical results confirming our theoretical findings. In particular, we show that one must use at least quadratic Lagrange elements for Worsey Farin splits to get convergence. That is, the use of linear Lagrange elements on Worsey-Farin refinements do not yield convergent
approximations. 

The paper is organized as follows. In the next section, 
we state the Maxwell eigenvalue problem and its mixed formulation.
We also introduce general primal and mixed finite element methods
for the eigenvalue problem and present a convergence framework.
In Section \ref{sec-prelims},
we give several preliminary results including trace inequalities and Sobolev embeddings.
Section \ref{sec-WF} gives the definition of Worsey-Farin triangulations,
summarizes some exactness properties of finite element spaces
on such meshes, and constructs a Scott-Zhang-type interpolant.
In Section \ref{sec-Fortin}, we construct two Fortin-like operators and show stability estimates for those two operators. 
As a byproduct, in Section \ref{apply-Max}, we show convergence of Lagrange finite element methods
for the Maxwell eigenvalue problem on Worsey-Farin triangulations provided
the polynomial degree is at least two.
Finally, in Section \ref{sec-numerics}, we present some numerical experiments illustrating that continuous piecewise polynomials can be applied to three dimensional Maxwell eigenvalue problem after Worsey-Farin refinement.

\section{The Maxwell's eigenvalue problem, its Discretization, and Convergence Framework}\label{sec-framework}

Let $\Omega \subset \mathbb{R}^3$ be a contractible, Lipschitz polyhedral domain and consider the eigenvalue problem: Find $\bu \in \bH_0(\bcurl, \Omega)$ such that
 \begin{equation} \label{equ:Ceig}
 (\bcurl \bu, \bcurl \bv) = \eta^2 (\bv,\bv), ~~~ \forall \bv \in \bH_0(\mathbf{curl}, \Omega),
 \end{equation}
 where $(\cdot,\cdot)$ is the $L^2(\Omega)$ inner product \MJN{over $\Omega$}, and
 \begin{equation*}
 \bH_0(\mathbf{curl}, \Omega):=\{\bv \in \bL^2(\Omega): \bcurl \bv \in \bL^2(\Omega) \text{~and~} \bv \times \bn=0 \text{~on~} \partial \Omega\}.
 \end{equation*}
 Here, $\bn$ is an exterior unit normal vector on $\partial \Omega$. 

 Accordingly, a \MJN{canonical} finite element method with 
 \MJN{respect to} a given finite-dimensional space $\VV_h \subset \bH_0(\mathbf{curl},\Omega)$ is to find 
 $\bu_h \in \VV_h \backslash \{0\}$ and $\eta_h \in \mathbb{R}$ such that
 \begin{equation} \label{equ:Deig}
 (\bcurl \bu_h, \bcurl \bv_h) = \eta_h^2 (\bu_h,\bv_h), ~~~ \forall \bv_h \in \VV_h.
 \end{equation}

\SG{When $\eta \neq 0$}, a mixed formulation given by Boffi et al. \cite{CME1999}  is equivalent to \eqref{equ:Ceig} : find $\lambda \in \mathbb{R} \backslash \{0\}$ and $0 \neq \bp \in \bH_0({\rm div}^0, \Omega)$, $\bsigma \in \bH_0(\mathbf{curl}, \Omega)$ such that:
\begin{equation} \label{equ:CMod}
    \begin{aligned}
        (\bsigma,\btau)+(\bp,\bcurl\btau)&=0~~~~~~~~~& \forall\btau \in \bH_0(\mathbf{curl}, \Omega), \\
        (\bcurl \bsigma,\bq)&=-\lambda(\bp,\bq)~~~~~~ & \forall \bq \in \bH_0({\rm div}^0, \Omega),
    \end{aligned}
\end{equation}
where 
\begin{alignat*}{1}
\bH_0({\rm div}, \Omega):=&\{\bv \in \bL^2(\Omega): {\rm div} \bv \in L^2(\Omega),  \bv \cdot \bn=0 \text{~on~} \partial \Omega\}, \\    
\bH_0({\rm div}^0, \Omega):=&  \{\bv \in \bH_0({\rm div}, \Omega): \Div \bv=0\}.
\end{alignat*}
We note that $\bH_0({\rm div}^0, \Omega)=\bcurl \bH_0(\bcurl, \Omega)$, and also $\lambda= \eta^2$, $\bsigma= \bu$ and $\bp= -\frac{\bcurl \bu}{\lambda}$. 

An equivalent mixed formulation of \MJN{the discrete problem} \eqref{equ:Deig} (\SG{when $\eta_h \neq 0$}) is: find $\lambda_h \in \mathbb{R}\backslash \{0\}$ and $0 \neq \bp_h \in \QQ_h, \bsigma_h \in \VV_h$ such that:
\begin{equation} \label{equ:DMod}
    \begin{aligned}
        (\bsigma_h,\btau_h)+(\bp_h,\bcurl\btau_h)&=0~~~~~~~~~&\forall\btau_h \in \VV_h, \\
        (\bcurl \bsigma_h,\bq_h)&=-\lambda_h(\bp_h,\bq_h)~~~~~~ &\forall \bq_h \in \QQ_h,
    \end{aligned}
\end{equation}
where $\QQ_h:= \bcurl \VV_h$. Analogous to the continuous setting, 
we have $\lambda_h= \eta_h^2$, $\bsigma_h= \bu_h$ and $\bp_h= -\frac{\bcurl \bu_h}{\lambda_h}$.

We follow the classical theory (e.g., \cite[Section 14]{FE2010}) and analyze the \MJN{mixed finite element} method 
\MJN{\eqref{equ:DMod}}
by considering the corresponding source problem. Define the solution operators $\bA: \bL^2(\Omega) \to \bH_0(\mathbf{curl}, \Omega)$ and $\bT:\bL^2(\Omega) \to \bH_0({\rm div}^0, \Omega)$ such that for given ${\bm f} \in \bL^2(\Omega)$, there holds
\begin{equation}\label{CSour}
    \begin{aligned}
        (\boldsymbol{Af},\btau)+(\boldsymbol{Tf},\bcurl\btau)&=0~~~~~~~~~& \forall\btau \in \bH_0(\mathbf{curl}, \Omega), \\
        (\bcurl \boldsymbol{Af},\bq)&=({\bm f},\bq)~~~~~~ & \forall \bq \in \bH_0({\rm div}^0, \Omega).
    \end{aligned}
\end{equation}
Likewise, the discrete solution operators $\bA_h:\bL^2(\Omega)\to \VV_h$
and $\bT_h:\bL^2(\Omega)\to \QQ_h$
are defined as
\begin{equation}\label{DSour}
    \begin{split}
        (\bA_h{\bm f},\btau_h)+(\bT_h {\bm f},\bcurl\btau_h)=0~~~~~~~~~& \forall\btau_h \in \VV_h, \\
        (\bcurl \bA_h{\bm f},\bq_h)=({\bm f},\bq_h)~~~~~~ & \forall \bq_h \in \QQ_h.
    \end{split}
\end{equation}

%%%%%%%%%%%%%%%%%%%%%%%%%%%%%%%%%%%%%%%%%%%
\subsection{Convergence theory}\label{subsec-conv}
It is well known that the convergence of the eigenvalues
to the discrete problem \eqref{equ:DMod} converge
to the exact eigenvalues (given in problem \eqref{equ:CMod})
provided the source problem converges uniformly
%It turns out  that in order to prove convergence of the eigenvalue problem 
%we only need to prove uniform convergence of the source problem 
(see for example \cite[Section 7]{FE2010}).  In the next proposition, the operator norm is defined as
\begin{equation}
    \|\bT\| := \sup\limits_{\f \in \SG{\bL^2}(\Omega) \backslash \{0\}} \frac{\|\bT \f\|_{L^2(\Omega)}}{\|\f\|_{L^2(\Omega)}}.
\end{equation}
%%%%%%%%%%%%%%%%%%%%%%%%
%%%%%%%%%%%%%%%%%%%%%%%%
%%%%%%%%%%%%%%%%%%%%%%%%
\begin{prop}\label{prop1}
Let $\T$ and $\T_h$ be defined from \eqref{CSour} and \eqref{DSour}, respectively, and suppose that $\|\T-\T_h\| \rightarrow 0$ as $h \rightarrow 0$. 
Consider the problem \eqref{equ:CMod} with the nonzero eigenvalues  $0<\lambda^{(1)} \leq \lambda^{(2)} \leq \cdots$ and the problem (\ref{equ:DMod}) 
with the nonzero eigenvalues $0<\lambda_h^{(1)} \leq \lambda_h^{(2)} \leq \cdots$. Then, for any fixed $i$, $\lim_{h \rightarrow 0} \lambda_h^{(i)} = \lambda^{(i)}$.
\end{prop}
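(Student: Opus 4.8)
The plan is to recast the statement as a spectral-convergence result for a family of compact, self-adjoint operators and then invoke the classical min-max machinery. First I would record the one-to-one correspondence between the Maxwell eigenpairs and the spectra of the solution operators. Choosing $\f = -\lambda\bp$ in the continuous source problem \eqref{CSour}, one checks directly that the eigenpair $(\bsigma,\bp)$ of \eqref{equ:CMod} solves \eqref{CSour} with this datum; hence, by uniqueness of the solution to \eqref{CSour}, $\A\f=\bsigma$ and $\T\f=\bp$, so that $\T\bp = -\tfrac{1}{\lambda}\bp$. The identical argument applied to \eqref{DSour} and \eqref{equ:DMod} gives $\T_h\bp_h = -\tfrac{1}{\lambda_h}\bp_h$. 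Thus the nonzero eigenvalues of $\T$ (resp. $\T_h$) are exactly $-1/\lambda^{(i)}$ (resp. $-1/\lambda_h^{(i)}$), with the same eigenvectors.

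Next I would verify that $\T$ and $\T_h$ are self-adjoint and negative semi-definite on $\bL^2(\Omega)$. Self-adjointness follows by choosing $\btau=\A\bm{g}$ in the first equation of \eqref{CSour} and $\bq=\T\f$ in the second equation of \eqref{CSour} written for the datum $\bm{g}$, which yields $(\T\f,\bm{g}) = -(\A\f,\A\bm{g}) = (\f,\T\bm{g})$; in particular $(\T\f,\f) = -\|\A\f\|_{L^2(\Omega)}^2 \le 0$, and the same computation works verbatim at the discrete level with discrete test functions. Compactness of $\T$ is inherited from the range inclusion $\T\f \in \bH_0(\mathrm{div}^0,\Omega)$ together with the compact embedding recalled in Section \ref{embeddings}, while $\T_h$ has finite rank since it maps into $\QQ_h$. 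Hence $S:=-\T$ and $S_h:=-\T_h$ are positive semi-definite, compact, self-adjoint operators whose positive eigenvalues, listed in decreasing order, are precisely $1/\lambda^{(1)} \ge 1/\lambda^{(2)} \ge \cdots$ and $1/\lambda_h^{(1)} \ge 1/\lambda_h^{(2)} \ge \cdots$.

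With this reformulation the Courant-Fischer characterization gives, for each fixed $i$,
\[
\frac{1}{\lambda^{(i)}} = \max_{\substack{V \subset \bL^2(\Omega)\\ \dim V = i}}\ \min_{0 \neq \bv \in V} \frac{(S\bv,\bv)}{(\bv,\bv)},
\]
and likewise for $1/\lambda_h^{(i)}$ with $S$ replaced by $S_h$, valid once $h$ is small enough that $\dim \QQ_h \ge i$. The elementary Weyl-type bound for self-adjoint operators then yields $\big|1/\lambda^{(i)} - 1/\lambda_h^{(i)}\big| \le \|S - S_h\| = \|\T - \T_h\|$, which tends to zero by hypothesis. Since $t \mapsto 1/t$ is continuous away from the origin and the limit $1/\lambda^{(i)}$ is strictly positive, I conclude that $\lambda_h^{(i)} \to \lambda^{(i)}$.

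The delicate point, and the step I would treat most carefully, is the bookkeeping of the eigenvalues: $\T_h$ carries a large kernel (it annihilates $\QQ_h^\perp$ and contributes only spurious zero eigenvalues), so the raw spectrum of $\T_h$ need not match that of $\T$ entry by entry. Passing to the positive operators $S$, $S_h$ and counting only their positive eigenvalues circumvents this, because for each fixed $i$ the $i$-th largest positive eigenvalue is well defined for all sufficiently small $h$ and is bounded away from $0$ uniformly in $h$. This ordering argument, which is exactly where the compactness of $\T$ and the norm convergence $\|\T-\T_h\|\to 0$ are simultaneously used, is the heart of the proof; the remaining estimates are standard (cf. \cite[Section 14]{FE2010}).
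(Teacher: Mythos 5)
Your argument is correct, but it is worth noting that the paper does not prove Proposition \ref{prop1} at all: it is stated as a known consequence of uniform convergence of the solution operators and deferred to the classical theory in \cite[Section 7]{FE2010}, which is usually presented via spectral projections and the Babu\v{s}ka--Osborn framework. Your route is a more elementary, self-contained alternative that exploits self-adjointness: the identification $\T\bp=-\lambda^{-1}\bp$, the symmetry identity $(\T\f,\bm g)=-(\A\f,\A\bm g)$, and the Courant--Fischer/Weyl bound $|\mu_i(S)-\mu_i(S_h)|\le\|S-S_h\|$ are all verified correctly, and passing to the positive eigenvalues of $S=-\T$, $S_h=-\T_h$ does handle the mismatch of kernels properly. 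What this buys is a short proof with an explicit error bound $|1/\lambda^{(i)}-1/\lambda^{(i)}_h|\le\|\T-\T_h\|$; what it gives up is generality, since the min-max machinery is unavailable for non-self-adjoint problems, which is why the standard references work with spectral projections. One point you should tighten: compactness of $\T$ does not follow from the range inclusion $\T\f\in\bH_0({\rm div}^0,\Omega)$ alone (that space is not compactly embedded in $\bL^2(\Omega)$, and Section \ref{embeddings} states only continuous embeddings). You need in addition that $\bcurl\,\T\f=-\A\f$ with $\|\A\f\|_{L^2(\Omega)}\le C\|\f\|_{L^2(\Omega)}$ (which follows from testing \eqref{CSour} with $\btau=\A\f$, $\bq=\T\f$ and Lemma \ref{lemmaemb}), so that $\T\f\in\VV^n$ is bounded in $\bH^{1/2+\delta}(\Omega)$ by Proposition \ref{emb}; compactness then comes from Rellich--Kondrachov. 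With that repair the proof is complete.
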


It will suffice to verify one assumption of our discrete spaces to guarantee $\|\T-\T_h\| \rightarrow 0$. To describe this assumption,  we introduce the space
\begin{equation} \label{vqh}
    \VV^t(\QQ_h)=\{\btau \in \VV^t: \bcurl \tau \in \QQ_h\}.
\end{equation}
% In other words, it is a subset of $\VV^t$, but the $\bcurl$ of an element in the space $\VV^t(\QQ_h)$ belongs to the discrete space $\QQ_h$. 

%%%%%%%%%%%%%%%%
%%%%%%%%%%%%%%%%
%%%%%%%%%%%%%%%%
\begin{assumption}\label{VQ}
We assume the existence of a projection
$\bPi_{\VV}:\VV^t(\QQ_h) \to \VV_h $ such that 
\begin{alignat*}{2}
    \bcurl\bPi_{\VV} ~\btau &= \bcurl\btau ~~~~~~~ &&\forall\btau \in \VV^t(\QQ_h),\\
    \|\bPi_{\VV}\btau-\btau\| &\leq \omega_0(h) (\|\btau\|_{H^{1/2+\delta}(\Omega)} + \|\bcurl\btau\|_{L^2(\Omega)} )~~~~~~~ &&\forall\btau \in \VV^t(\QQ_h).
\end{alignat*}
Furthermore, we assume that the $L^2$-orthogonal projection $\bbP_Q:\bL^2(\Omega) \to \QQ_h$ satisfies
\begin{equation}
    \|\bbP_Q \bp-\bp\|_{L^2(\Omega)} \leq \omega_1(h)\|\bcurl \bp\|_{L^2(\Omega)}~~~~~\forall \bp \in \bH(\mathbf{curl}, \Omega) \cap \bH_0({\rm div}^0, \Omega).
\end{equation}
Here, $\omega_i > 0$ satisfies $\lim_{h\rightarrow0^+} \omega_i(h)=0$ for $i=0,1$.
\end{assumption}

With this assumption we have the following result. We give the proof of this result in the appendix; it is  similar to the argument in
\cite{CLM2021}.
\begin{thm} \label{thm:Th}
Suppose that $(\VV_h,\QQ_h)$ satisfy Assumption \ref{VQ} and let $\bT$ and $\bT_h$ defined as in the equations (\ref{CSour}) and (\ref{DSour}), respectively. Then we have
\begin{equation}
    \|\bT-\bT_h\| \leq C(\omega_0(h)+\omega_1(h)).
\end{equation}
\end{thm}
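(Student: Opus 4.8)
The plan is to fix an arbitrary $\f\in\bL^2(\Omega)$, bound $\|\bT\f-\bT_h\f\|$ by $C(\omega_0(h)+\omega_1(h))\|\f\|$, and take the supremum over $\f$. First I would subtract \eqref{CSour} from \eqref{DSour} to get, writing $e_A:=\bA\f-\bA_h\f$,
\begin{align*}
 (e_A,\btau_h)+(\bT\f-\bT_h\f,\bcurl\btau_h)&=0 &&\forall\btau_h\in\VV_h,\\
 (\bcurl e_A,\bq_h)&=0 &&\forall\bq_h\in\QQ_h.
\end{align*}
Testing the continuous and discrete second equations over $\QQ_h\subset\bH_0({\rm div}^0,\Omega)$ identifies $\bcurl\bA_h\f=\bbP_Q\f$ and $\bcurl\bA\f$ as the $\bL^2$-projection of $\f$ onto $\bH_0({\rm div}^0,\Omega)$, while the first continuous equation gives $\bcurl\bT\f=-\bA\f$, so $\bT\f\in\bH(\mathbf{curl},\Omega)\cap\bH_0({\rm div}^0,\Omega)$. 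I would then split $\bT\f-\bT_h\f=(\bT\f-\bbP_Q\bT\f)+\bw_h$ with $\bw_h:=\bbP_Q\bT\f-\bT_h\f\in\QQ_h$; the second bound of Assumption \ref{VQ} controls the first piece by $\omega_1(h)\|\bcurl\bT\f\|=\omega_1(h)\|\bA\f\|\le C\omega_1(h)\|\f\|$.

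To treat $\bw_h$ I would take the solenoidal lift $\bz\in\bH_0(\mathbf{curl},\Omega)\cap\bH({\rm div}^0,\Omega)$ (divergence-free, no boundary condition) with $\bcurl\bz=\bw_h$, which by the embeddings of Section \ref{sec-prelims} satisfies $\|\bz\|_{H^{1/2+\delta}(\Omega)}\le C\|\bw_h\|$. Since $\bcurl\bz=\bw_h\in\QQ_h$ we have $\bz\in\VV^t(\QQ_h)$, so $\bPi_\VV\bz\in\VV_h$, $\bcurl\bPi_\VV\bz=\bw_h$, and $\|\bPi_\VV\bz\|\le C\|\bw_h\|$ by the first bound of Assumption \ref{VQ}. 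Using $\bcurl\bPi_\VV\bz\in\QQ_h$, the definition of $\bbP_Q$, and the two first equations above,
\begin{equation*}
 \|\bw_h\|^2=(\bw_h,\bcurl\bPi_\VV\bz)=(\bT\f,\bcurl\bPi_\VV\bz)-(\bT_h\f,\bcurl\bPi_\VV\bz)=-(e_A,\bPi_\VV\bz)\le C\|e_A\|\,\|\bw_h\|,
\end{equation*}
so $\|\bw_h\|\le C\|e_A\|$ and everything reduces to proving $\|e_A\|\le C(\omega_0(h)+\omega_1(h))\|\f\|$.

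For this I would introduce the solenoidal lift $\tilde{\bA}\in\bH_0(\mathbf{curl},\Omega)\cap\bH({\rm div}^0,\Omega)$ of $\bbP_Q\f$, i.e. $\bcurl\tilde{\bA}=\bbP_Q\f$ with $\|\tilde{\bA}\|_{H^{1/2+\delta}(\Omega)}\le C\|\f\|$, and write $e_A=(\bA\f-\tilde{\bA})+(\tilde{\bA}-\bA_h\f)$. Because $\bcurl\tilde{\bA}=\bbP_Q\f\in\QQ_h$ the Fortin operator applies and $\bcurl\bPi_\VV\tilde{\bA}=\bbP_Q\f$, so $\bm{r}_h:=\bPi_\VV\tilde{\bA}-\bA_h\f$ lies in the discrete kernel $\ker(\bcurl|_{\VV_h})\subset\nabla H_0^1(\Omega)$. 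Taking $\btau_h\in\ker(\bcurl|_{\VV_h})$ in the first discrete equation shows $\bA_h\f$ is $\bL^2$-orthogonal to that kernel, while $\tilde{\bA}\in\bH({\rm div}^0,\Omega)$ is orthogonal to $\nabla H_0^1(\Omega)$; hence $\|\bm{r}_h\|^2=(\bPi_\VV\tilde{\bA}-\tilde{\bA},\bm{r}_h)$ and $\|\tilde{\bA}-\bA_h\f\|\le 2\|\tilde{\bA}-\bPi_\VV\tilde{\bA}\|\le C\omega_0(h)\|\f\|$ by the first bound of Assumption \ref{VQ}.

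The main obstacle is $\|\bA\f-\tilde{\bA}\|$: both fields lie in $\bH_0(\mathbf{curl},\Omega)\cap\bH({\rm div}^0,\Omega)$ and $\bcurl(\bA\f-\tilde{\bA})=(I-\bbP_Q)\bcurl\bA\f=:\bm{g}\perp\QQ_h$, so the Friedrichs inequality alone only gives $\|\bA\f-\tilde{\bA}\|\le C\|\bm{g}\|$, which need not be small. To recover the factor $\omega_1(h)$ I would dualize: since $\bA\f-\tilde{\bA}$ is solenoidal it suffices to test against $\bphi\in\bH({\rm div}^0,\Omega)$, and by standard vector potential theory on the contractible domain one may pick $\bw\in\bH(\mathbf{curl},\Omega)\cap\bH_0({\rm div}^0,\Omega)$ with $\bcurl\bw=\bphi$ and $\|\bw\|_{\bH(\mathbf{curl},\Omega)}\le C\|\bphi\|$. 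Integrating by parts (the boundary term vanishes because $\bA\f-\tilde{\bA}$ has zero tangential trace) and using $\bm{g}\perp\QQ_h$ gives
\begin{equation*}
 (\bA\f-\tilde{\bA},\bphi)=(\bm{g},\bw)=(\bm{g},\bw-\bbP_Q\bw)\le\|\bm{g}\|\,\|\bw-\bbP_Q\bw\|\le\omega_1(h)\,\|\bm{g}\|\,\|\bphi\|,
\end{equation*}
where the last inequality is the second bound of Assumption \ref{VQ} applied to $\bw$. Since $\|\bm{g}\|\le\|\bcurl\bA\f\|\le\|\f\|$, taking the supremum over $\bphi$ yields $\|\bA\f-\tilde{\bA}\|\le C\omega_1(h)\|\f\|$. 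Combining these estimates gives $\|e_A\|\le C(\omega_0(h)+\omega_1(h))\|\f\|$, hence the same bound for $\|\bw_h\|$ and for $\|\bT\f-\bT_h\f\|$, and dividing by $\|\f\|$ proves the theorem. The delicate point throughout is the choice of two different lifts—tangential-trace-free for $\bA\f-\tilde{\bA}$ versus normal-trace-free for the dual field $\bw$—so that the boundary term disappears and Assumption \ref{VQ} becomes applicable.
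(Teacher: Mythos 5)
Your argument is correct, and its skeleton is the paper's: everything is reduced to the two bounds of Assumption \ref{VQ} via a Fortin projection of a solenoidal lift, a duality argument producing the $\omega_1(h)$ factor, and a discrete curl-lifting (your $\bz$, the paper's Lemma \ref{IS}) to convert the $\bA$-error into the $\bT$-error. The differences are organizational but worth noting. The paper routes both errors through the continuous solution with projected data, $\bpsi=\bA\bbP_Q\f$ and $\bw=\bT\bbP_Q\f$: Lemma \ref{AT} gets the $\omega_1(h)$ contribution essentially for free because the dual field $\bT\bbP_Q\f-\bT\f$ already lies in $\bH(\mathbf{curl},\Omega)\cap\bH_0({\rm div}^0,\Omega)$ and its curl is exactly the quantity being estimated; the remaining piece is handled by testing the error equations with $\bPi_{\VV}\bpsi-\bsigma_h$. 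Your $\tilde{\bA}$ is in fact identical to $\bA\bbP_Q\f$ (uniqueness in $\bH_0(\mathbf{curl},\Omega)\cap\bH({\rm div}^0,\Omega)$ follows from Lemma \ref{lemmaemb}), and your kernel-orthogonality computation for $\bm{r}_h$ is the same mechanism that makes the $(\bbP_Q\bw-\bu_h,\bcurl\btau_h)$ term drop in the paper's \eqref{eq712a}. What is genuinely different is (i) your duality for $\|\bA\f-\tilde{\bA}\|$, which tests against arbitrary $\bphi\in\bH({\rm div}^0,\Omega)$ and therefore needs surjectivity of $\bcurl:\bH(\mathbf{curl},\Omega)\cap\bH_0({\rm div}^0,\Omega)\to\bH({\rm div}^0,\Omega)$ — true and elementary on a contractible domain (take any curl potential and subtract the gradient of the Neumann harmonic corrector), but not supplied by Lemma \ref{ss}, which only lifts fields with vanishing normal trace, so you would need to supply this one extra ingredient; and (ii) your splitting $\bT\f-\bT_h\f=(I-\bbP_Q)\bT\f+(\bbP_Q\bT\f-\bT_h\f)$, which avoids introducing $\bT\bbP_Q\f$ altogether. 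Both routes are sound; you should also record the a priori bound $\|\bA\f\|_{L^2(\Omega)}\le C\|\f\|_{L^2(\Omega)}$ (take $\btau=\bA\f$ in \eqref{CSour} and apply Lemma \ref{lemmaemb} to $\bT\f$), which you use implicitly when bounding $\|(I-\bbP_Q)\bT\f\|_{L^2(\Omega)}$ and $\|\bm g\|_{L^2(\Omega)}$.
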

 The following corollary is a consequence of the above theorem and Proposition \ref{prop1}.
\begin{cor}
Consider the problem \eqref{equ:CMod} with the nonzero eigenvalues  $0<\lambda^{(1)} \leq \lambda^{(2)} \leq \cdots$ and the problem (\ref{equ:DMod}) 
with the nonzero eigenvalues $0<\lambda_h^{(1)} \leq \lambda_h^{(2)} \leq \cdots$. Suppose that $(\VV_h,\QQ_h)$ satisfy Assumption \ref{VQ}. Then, for any fixed $i$, $\lim_{h \rightarrow 0} \lambda_h^{(i)} = \lambda^{(i)}$.
\end{cor}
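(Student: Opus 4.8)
The plan is to chain the two results already established, since the corollary asserts nothing beyond combining them. First I would invoke Theorem \ref{thm:Th}: under Assumption \ref{VQ}, the operator-norm error of the solution operator obeys $\|\bT - \bT_h\| \leq C(\omega_0(h) + \omega_1(h))$. Since Assumption \ref{VQ} stipulates $\lim_{h \to 0^+} \omega_i(h) = 0$ for $i = 0,1$, the right-hand side tends to zero, and hence $\|\bT - \bT_h\| \to 0$ as $h \to 0$.

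With uniform convergence of the solution operator in hand, I would then apply Proposition \ref{prop1} verbatim. That proposition takes precisely the hypothesis $\|\bT - \bT_h\| \to 0$ and delivers the conclusion $\lim_{h \to 0} \lambda_h^{(i)} = \lambda^{(i)}$ for each fixed $i$, where the eigenvalues are exactly those enumerated from \eqref{equ:CMod} and \eqref{equ:DMod}. This completes the argument.

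There is no genuine obstacle at the level of the corollary itself: all of the analytic content is absorbed into Theorem \ref{thm:Th} (the bound on $\|\bT - \bT_h\|$) and Proposition \ref{prop1} (the spectral-convergence machinery of the source-operator framework, in the spirit of \cite{FE2010}). The only point worth a moment's care is that the eigenvalue enumerations in the corollary match those in Proposition \ref{prop1}, which they do by construction. The real difficulty lies upstream — namely verifying that the Lagrange pair $(\VV_h, \QQ_h)$ on Worsey-Farin splits actually satisfies Assumption \ref{VQ}, via the construction of the two Fortin-like operators — but that is taken as a hypothesis here and is the subject of the later sections.
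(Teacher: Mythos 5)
Your proposal is correct and matches the paper's own argument exactly: the paper states the corollary "is a consequence of the above theorem and Proposition \ref{prop1}," which is precisely the chain you describe (Theorem \ref{thm:Th} gives $\|\bT-\bT_h\|\to 0$ since $\omega_0(h),\omega_1(h)\to 0$, and Proposition \ref{prop1} then yields eigenvalue convergence). Nothing further is needed.
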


\section{Preliminaries}\label{sec-prelims}

\subsection{Trace Theorems and Inverse inequalities}%\label{embeddings}
Here we state inequalities that allow us to estimate the Fortin-type projections. We start by recalling the definition of fractional-order Sobolev spaces
and their accompanying (semi-) norm; see for example \cite{Elliptic2011}.
\begin{definition}
Let $U \subset \bbR^d$ be an open set. For $0<s<1$ and $1<p<\infty$
define
\[ W^{s,p}(U):= \{ u \in L^p(U): \int_{U} \int_{U} \frac{|u(x)-u(y)|^p}{|x-y|^{d+sp}}\,dx\,dy < \infty \}.
\]
The accompanying semi-norm and norm on $W^{s,p}(U)$ are given respectively by
\[ |u|_{W^{s,p}(U)} := (\int_{U} \int_{U} \frac{|u(x)-u(y)|^p}{|x-y|^{d+sp}}\,dx\,dy)^{1/p};
\]
\[ \|u\|_{W^{s,p}(U)} :=(\|u\|^p_{L^p(U)} + |u|_{W^{s,p}(U)}^p)^{1/p}.
\]
Finally, we use the notation $H^s(U):= W^{s,2}(U)$.
\end{definition}
We also extend the definition of fractional-order Sobolev norms in the case $U= \partial S$, where $S$ is a simplex. We take the same definition as above, where we view $U$ as a $d$-dimensional manifold; see \cite{Elliptic2011}.

We will use the following basic Sobolev embedding result; see \cite[Theorem 1.4.4.1]{Elliptic2011}. We restrict ourselves to a simplex for simplicity. 
\begin{lemma}
Let $K$ be a $d$-dimensional simplex. Then, 
\begin{equation}\label{sobolevembedding}
\|w\|_{W^{t,q}(K)} \le C \| w\|_{W^{s,p}(K)}, \quad 0\le t \le s \le 1,\  1\le p \le q<\infty,  
\end{equation}
with $s-d/p=t-d/q$. The constant $C>0$ depends on $K$. 
\end{lemma}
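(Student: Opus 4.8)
The plan is to read this as the standard continuous embedding $W^{s,p}(K)\hookrightarrow W^{t,q}(K)$ of fractional Sobolev spaces under the subcritical scaling relation $s-d/p=t-d/q$; the hypotheses $t\le s$ and $p\le q$ are exactly what make this scaling admissible. The degenerate case $s=t$ forces $p=q$ and the inequality is trivial, so I would assume $t<s$, hence $p<q$ and (implicitly) $(s-t)p<d$. First I would reduce to the whole space. Since a simplex $K$ is a bounded Lipschitz domain, there is a bounded linear Stein-type extension operator $E\colon W^{s,p}(K)\to W^{s,p}(\mathbb{R}^d)$ with $\|Ew\|_{W^{s,p}(\mathbb{R}^d)}\le C\|w\|_{W^{s,p}(K)}$ and $Ew|_K=w$. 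Restriction is trivially bounded, $\|v\|_{W^{t,q}(K)}\le\|v\|_{W^{t,q}(\mathbb{R}^d)}$, because both the Gagliardo double integral and the $L^q$ integral are taken over a smaller set. Hence it suffices to prove $\|v\|_{W^{t,q}(\mathbb{R}^d)}\le C\|v\|_{W^{s,p}(\mathbb{R}^d)}$ for $v=Ew$.

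For the whole-space estimate I would pass to a Littlewood--Paley description. For $0<\sigma<1$ the Gagliardo norm is equivalent to the Besov norm $B^\sigma_{p,p}(\mathbb{R}^d)$; writing $\Delta_j$ for a dyadic frequency block, one has $\|v\|_{W^{\sigma,p}(\mathbb{R}^d)}\simeq\bigl(\sum_j 2^{j\sigma p}\|\Delta_j v\|_{L^p(\mathbb{R}^d)}^p\bigr)^{1/p}$. The heart of the argument is then the Besov embedding $B^s_{p,p}\hookrightarrow B^t_{q,q}$. Using the Bernstein inequality $\|\Delta_j v\|_{L^q}\lesssim 2^{jd(1/p-1/q)}\|\Delta_j v\|_{L^p}$ (valid since $q\ge p$ and $\Delta_j v$ is frequency-localized to a dyadic annulus), the scaling relation, rewritten as $t+d(1/p-1/q)=s$, gives $2^{jt}\|\Delta_j v\|_{L^q}\lesssim 2^{js}\|\Delta_j v\|_{L^p}$ for every $j$. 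The inclusion $\ell^p\hookrightarrow\ell^q$ (again because $q\ge p$) then converts the $\ell^p$-sum on the right into an $\ell^q$-sum and yields $\|v\|_{B^t_{q,q}}\lesssim\|v\|_{B^s_{p,p}}$, which is the desired bound after translating back through the Gagliardo--Besov identification. For the endpoint $s=1$, $W^{1,p}$ is the classical Sobolev space and one may either run the same Littlewood--Paley argument or invoke the classical Gagliardo--Nirenberg--Sobolev inequality.

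If one prefers to avoid Littlewood--Paley theory, I would instead split the estimate into the $L^q$-part and the seminorm part. The $L^q$-part follows from the fractional Sobolev inequality $\|v\|_{L^{p^\ast}(\mathbb{R}^d)}\le C|v|_{W^{s,p}(\mathbb{R}^d)}$ with $p^\ast=dp/(d-sp)$ (proved by estimating $v$ pointwise by averaged increments over dyadic shells, as in Di Nezza--Palatucci--Valdinoci), combined with Hölder interpolation of $L^q$ between $L^p$ and $L^{p^\ast}$, which applies since $p\le q\le p^\ast$. The genuinely nontrivial part, and where I expect essentially all the difficulty to concentrate, is the seminorm estimate $|v|_{W^{t,q}}\le C\|v\|_{W^{s,p}}$ when $t>0$: one must trade the smoothness drop $s\to t$ against the integrability gain $p\to q$ directly inside the double Gagliardo integral. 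The Littlewood--Paley route packages this cleanly into the Bernstein inequality together with $\ell^p\hookrightarrow\ell^q$, whereas a direct dyadic estimate of the double integral applied to fractional increments, though feasible, is delicate. Since this is a classical result, for the purposes of the paper it suffices to cite \cite[Theorem 1.4.4.1]{Elliptic2011}.
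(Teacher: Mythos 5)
Your proposal is correct in substance, but it does far more work than the paper, which offers no proof of this lemma at all: it is stated as a known result and justified solely by the citation to Grisvard, \cite[Theorem~1.4.4.1]{Elliptic2011}. Your closing sentence therefore coincides exactly with what the paper actually does, and everything before it is a bonus. The argument you sketch --- reduce to $\mathbb{R}^d$ by a bounded extension operator on the Lipschitz domain $K$ plus trivial boundedness of restriction, then prove the whole-space embedding via the identification $W^{\sigma,p}=B^\sigma_{p,p}$ for $0<\sigma<1$, Bernstein's inequality on dyadic blocks, and $\ell^p\hookrightarrow\ell^q$ --- is a standard and correct route, and your alternative decomposition into the $L^q$ part (fractional Sobolev inequality plus H\"older interpolation) and the seminorm part correctly identifies where the difficulty sits. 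The one point to tighten is the endpoint $s=1$: there the identification $W^{1,p}=B^1_{p,p}$ fails ($W^{1,p}=F^1_{p,2}$ for $1<p<\infty$, and $W^{1,1}$ is not a Besov or Triebel--Lizorkin space), so you cannot literally ``run the same Littlewood--Paley argument''; the Gagliardo--Nirenberg--Sobolev inequality you invoke covers the case $t=0$, but for $0<t<1$ one needs either the Triebel--Lizorkin chain $F^1_{p,2}\hookrightarrow F^t_{q,2}\hookrightarrow F^t_{q,q}$ (which requires $q\ge 2$ in the last step) or a fractional Gagliardo--Nirenberg interpolation. Since the paper uses the $s=1$ case only with $t=0$ (in the proof of its extension lemma, where classical GNS suffices) and otherwise applies the lemma with $s<1$, none of this affects the paper; for its purposes the citation is the right call, and your sketch is a serviceable roadmap should a self-contained proof ever be wanted.
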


We will also need a trace inequality; see \cite[Theorem 1.5.1.2]{Elliptic2011} 
\begin{lemma}
Let $K$ be a $d$-dimensional simplex. Let  $0 < s-1/p < 1, 0 \le s \le 1$. If $w \in W^{s,p}(K)$, then  
\begin{equation}\label{traceinq} 
\|w\|_{W^{s-1/p,p}(\partial K)} \le C \| w\|_{W^{s,p}(K)}.  
\end{equation}
Moreover, if $v \in W^{s-1/p,p}(\partial K)$ there exists $w  \in W^{s,p}(K)$ such that 
$w|_{\p K} = v$ and
\begin{equation}\label{righttraceinq} 
 \| w\|_{W^{s,p}(K)} \le C \|v\|_{W^{s-1/p,p}(\partial K)}.      
\end{equation}
The constant $C>0$ depends on $K$.
\end{lemma}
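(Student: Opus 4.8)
The plan is to reduce both assertions to the model half-space $\bbR^d_+ = \{(x',x_d): x' \in \bbR^{d-1},\, x_d>0\}$, for which the statements are classical, and then to transfer them to the simplex by a partition of unity adapted to the faces of $K$. Throughout I write $\sigma := s - 1/p$, so that the hypothesis $0<s-1/p<1$ guarantees $\sigma \in (0,1)$ and $s>1/p$; the latter is exactly the threshold needed for the pointwise trace to make sense, while $\sigma<1$ is what prevents any compatibility condition across the edges of $\partial K$ from entering.

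First I would establish the direct estimate \eqref{traceinq} on $\bbR^d_+$: for $w \in W^{s,p}(\bbR^d_+)$ one controls the Gagliardo seminorm of the trace $w(\cdot,0)$ on $\bbR^{d-1}$ by comparing, for each pair $x',y'$ with $r=|x'-y'|$, the boundary values $w(x',0)$ and $w(y',0)$ with the average $\bar w$ of $w$ over a ball of radius $\sim r$ located at height $\sim r$ above the segment $[x',y']$. Splitting $w(x',0)-w(y',0) = (w(x',0)-\bar w)+(\bar w - w(y',0))$ and estimating each difference through the interior Gagliardo seminorm $|w|_{W^{s,p}(\bbR^d_+)}$ yields $|w(\cdot,0)|_{W^{\sigma,p}(\bbR^{d-1})} \le C\|w\|_{W^{s,p}(\bbR^d_+)}$ after integrating in $r$. (When $s=1$ this is the classical trace inequality and the averaging step is replaced by the fundamental theorem of calculus.) For the extension \eqref{righttraceinq} on the half-space I would use a mollified lifting $w(x',x_d) = (\eta_{x_d}\ast v)(x')$, with $\eta_t(\cdot)=t^{-(d-1)}\eta(\cdot/t)$ an approximate identity cut off for large $x_d$, and verify directly that $\|w\|_{W^{s,p}(\bbR^d_+)} \le C\|v\|_{W^{\sigma,p}(\bbR^{d-1})}$.

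To transfer to $K$, choose a partition of unity $\{\chi_j\}_{j=0}^d$ on $\partial K$ with $\chi_j$ supported in a neighborhood (relative to $\partial K$) of the open face $F_j$, so that the supports avoid the $(d-2)$-dimensional edges. Each face is flat, so an affine change of variables flattens a neighborhood of $\mathrm{supp}\,\chi_j$ onto a piece of $\bbR^d_+$; composition with this map is bounded on the relevant fractional Sobolev spaces by the bi-Lipschitz (here affine) invariance of the Gagliardo seminorm together with a change of variables. For \eqref{traceinq} I would bound the global seminorm on $\partial K$ by splitting the double integral into contributions from pairs $x,y$ on a common face---controlled by the half-space estimate after flattening---and pairs on distinct faces; for the latter, the separation $|x-y|$ is bounded below away from the shared edge so the kernel is integrable and the term is dominated by $\|w\|_{L^p(\partial K)}^p \le C\|w\|_{W^{s,p}(K)}^p$, while the remaining near-edge contribution is handled using that $\sigma<1$ keeps the kernel locally integrable. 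For the extension I would set $w=\sum_j w_j$, where $w_j$ is the half-space lifting of the flattened $\chi_j v$ multiplied by an interior cutoff equal to $1$ near $F_j$ and vanishing near the other faces; since multiplication by a fixed smooth function is bounded on $W^{s,p}$ for $0\le s\le 1$, each $w_j \in W^{s,p}(K)$ with the required bound, and by construction the trace of $w$ on $\partial K$ equals $\sum_j \chi_j v = v$ almost everywhere.

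The main obstacle I anticipate is the geometry of $\partial K$ near its lower-dimensional skeleton: both the cross-face near-edge terms in the direct estimate and the matching of traces of the glued local liftings in the extension require care, and it is precisely here that the restriction $\sigma = s-1/p < 1$ is essential, since for $\sigma \ge 1$ global membership in $W^{\sigma,p}(\partial K)$ would force compatibility conditions between the traces on adjacent faces that are not assumed here.
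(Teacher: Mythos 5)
The paper does not prove this lemma at all: it is quoted verbatim from Grisvard \cite[Theorem 1.5.1.2]{Elliptic2011} as a known trace/extension theorem for Lipschitz (here polyhedral) domains, so there is no in-paper argument to compare against. Your overall strategy --- prove both statements on the model half-space and localize to $K$ --- is the standard one and is essentially how the cited result is obtained; the two half-space ingredients you describe (control of the boundary Gagliardo seminorm by comparison with interior Whitney-ball averages, and the mollified lifting $w(x',x_d)=(\eta_{x_d}\ast v)(x')$) are correct. The gaps are all in how you pass from the half-space to the simplex, i.e.\ in the treatment of the $(d-2)$-dimensional skeleton of $\partial K$.

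First, a partition of unity $\{\chi_j\}$ on $\partial K$ in which every $\chi_j$ is supported away from the edges cannot exist: each edge has a relative neighborhood in $\partial K$ of positive measure on which every $\chi_j$ vanishes, so $\sum_j\chi_j\ne 1$ there, and the glued lifting $w=\sum_j w_j$ has trace $\sum_j\chi_j v\ne v$ near the skeleton. If instead some $\chi_j$ reaches across an edge, its support meets two non-coplanar faces and a single affine chart no longer flattens it, and the trace of the localized lifting $w_j$ on the adjacent face is no longer controlled. Second, your two uses of ``$\sigma<1$'' invoke the wrong threshold. Writing $\sigma=s-1/p$, the cross-face kernel $|x-y|^{-(d-1)-\sigma p}$ integrated over $F_i\times F_j$ near the shared edge behaves like $\int_0^\varepsilon\!\int_0^\varepsilon (t+u)^{-1-\sigma p}\,dt\,du$, which is finite iff $\sigma p<1$; likewise edge traces of functions in $W^{\sigma,p}(F_j)$ exist (and compatibility conditions enter) precisely when $\sigma p>1$. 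Since the hypotheses allow $\sigma p=sp-1$ up to $p-1$, the case $\sigma p\ge 1$ (e.g.\ $s=1$, $p>2$) is not covered by your argument: there the near-edge cross-face part of $|w|_{W^{\sigma,p}(\partial K)}^p$ cannot be absorbed into $\|w\|_{L^p(\partial K)}^p$ and must be estimated by comparing $w(x)$ and $w(y)$ with interior averages of $w$ over balls in $K$ placed at distance $\sim|x-y|$ from both faces (possible because the dihedral angles are positive), exactly as in the single-face case. The clean way to repair both defects at once --- and what the cited reference in effect does --- is to drop the face-by-face affine flattening and use that $\partial K$ is a Lipschitz graph in a neighborhood of \emph{every} boundary point, including edge and vertex points (by convexity of the simplex); bi-Lipschitz changes of variables preserve $W^{s,p}(K)$ and $W^{\sigma,p}(\partial K)$ for the exponents at hand, so a finite cover by such graph charts reduces everything, including the skeleton, to the half-space case.
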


We will often require particular inverse estimates for polynomial spaces, which follow from equivalence of norms on finite dimensional spaces; %We state ones we need; 
for more general inverse estimates consult, for example, \cite[Section 1.6 and Section 4.5]{brenner2008mathematical}.
\begin{lemma} \label{lem:lp}
Let $K$ be a $d$-dimensional simplex. For any $v \in \pol_{r}(K)$ the following bounds hold 
\begin{subequations} \label{equ:inv}
\begin{alignat}{2}
|v|_{W^{k,p}(K)} \leq & C h_K^{\ell-k+d/p-d/q} |v|_{W^{\ell,q}(K)} \quad \forall 1 \le  p, q \le \infty, 0 \le  \ell \le k \le 1, \label{inv1} \\
\|v\|_{L^{p}(\partial K)} \leq & C h_K^{-1/p} \|v\|_{L^p(K)} \quad \forall 1 \le  p \le \infty, \label{inv2}
\end{alignat}
where $C>0$ depends on the shape-regularity of $K$, $r$, and $d$, but is independent of $h_K$.
\end{subequations}
\end{lemma}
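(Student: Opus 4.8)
The plan is to prove both bounds by the classical scaling argument: map $K$ to a fixed reference simplex $\hat K$ via an affine homeomorphism, invoke equivalence of norms on the finite-dimensional space $\pol_r(\hat K)$ to obtain the estimates with a constant depending only on $r$ and $d$, and then pull the inequalities back to $K$, tracking the powers of $h_K$ produced by the change of variables.

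First I would fix a reference simplex $\hat K$ and write the affine bijection $F_K : \hat K \to K$ as $F_K(\hat x) = B_K \hat x + b_K$. Shape-regularity of $K$ gives $\|B_K\| \le C h_K$, $\|B_K^{-1}\| \le C h_K^{-1}$, and $|\det B_K| \sim h_K^d$, with constants depending only on the shape-regularity parameter and $d$. For $v \in \pol_r(K)$ set $\hat v := v \circ F_K \in \pol_r(\hat K)$. I would then record the scaling laws for the three quantities in the statement. A change of variables gives $\|v\|_{L^p(K)} \sim h_K^{d/p}\|\hat v\|_{L^p(\hat K)}$ and, since each face of $\partial K$ is the affine image of a face of $\partial \hat K$ whose $(d-1)$-measure scales like $h_K^{d-1}$, also $\|v\|_{L^p(\partial K)} \sim h_K^{(d-1)/p}\|\hat v\|_{L^p(\partial \hat K)}$. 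For the fractional seminorm I would substitute $x = F_K(\hat x)$, $y = F_K(\hat y)$ in the Gagliardo double integral; using $v(x)-v(y) = \hat v(\hat x)-\hat v(\hat y)$, the two Jacobian factors $|\det B_K|^2$, and the two-sided bound $|x-y| \sim h_K|\hat x-\hat y|$ (itself a consequence of the norm bounds on $B_K$ and $B_K^{-1}$), I obtain
\[ |v|_{W^{s,p}(K)} \sim h_K^{d/p - s}\,|\hat v|_{W^{s,p}(\hat K)}, \qquad 0 \le s \le 1, \]
a single formula that also covers the $L^p$ case ($s=0$) and the $W^{1,p}$ case ($s=1$).

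With the scalings in hand, the estimates on $\hat K$ follow from equivalence of norms on finite-dimensional spaces. For \eqref{inv2} the trace map $\hat v \mapsto \hat v|_{\partial \hat K}$ is bounded on $\pol_r(\hat K)$, so $\|\hat v\|_{L^p(\partial \hat K)} \le C\|\hat v\|_{L^p(\hat K)}$; combining with the boundary and volume scalings yields the exponent $(d-1)/p - d/p = -1/p$. For \eqref{inv1} I would compare the two seminorms on $\pol_r(\hat K)$: when $\ell = 0$ the right-hand side is the full $L^q(\hat K)$ norm, which dominates any seminorm, while for $0 < \ell \le k \le 1$ both $|\cdot|_{W^{k,p}(\hat K)}$ and $|\cdot|_{W^{\ell,q}(\hat K)}$ vanish precisely on the constants, hence descend to norms on the finite-dimensional quotient $\pol_r(\hat K)/\pol_0$, where they are equivalent. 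Either way $|\hat v|_{W^{k,p}(\hat K)} \le C|\hat v|_{W^{\ell,q}(\hat K)}$, and inserting the scaling exponents gives $(d/p-k)-(d/q-\ell) = \ell-k+d/p-d/q$, as claimed.

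I expect the only genuine subtlety to be the fractional part of the argument: verifying that the Gagliardo seminorm transforms with the clean exponent $d/p - s$ requires the two-sided comparability $|x-y|\sim h_K|\hat x - \hat y|$ on all of $\hat K \times \hat K$, which is exactly where shape-regularity enters, and one must also note that the comparison of seminorms for $\ell>0$ is legitimate only because both vanish on the common kernel $\pol_0$ — it is this observation that licenses applying equivalence of norms on the quotient rather than on $\pol_r(\hat K)$ itself.
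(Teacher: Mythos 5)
Your proposal is correct and follows exactly the route the paper intends: the paper does not write out a proof of this lemma but simply notes that it follows from equivalence of norms on finite-dimensional spaces (citing Brenner--Scott), and your reference-element scaling argument --- including the quotient-by-constants observation for $\ell>0$ and the two-sided bound $|x-y|\sim h_K|\hat x-\hat y|$ for the Gagliardo seminorm --- is the standard way to make that remark precise.
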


\subsection{Embeddings}\label{embeddings}
To prove convergence to the solution operator, we utilize 
certain embeddings of vector-valued functions.  To describe the results we introduce
the following space notation:
\begin{alignat*}{1}
\VV:=&\bH(\mathbf{curl}, \Omega) \cap \bH({\rm div}, \Omega), \\
\VV^t :=& \bH_0(\mathbf{curl}, \Omega) \cap \bH({\rm div}, \Omega),\\
%\{\bv \in \VV, \bv \times \bn=0 \text{~on~} \partial \Omega\}, \\
\VV^n := & \bH(\mathbf{curl}, \Omega) \cap \bH_0({\rm div}, \Omega).
%\{\bv \in \VV, \bv \cdot \bn=0 \text{~on~} \partial \Omega\}.
\end{alignat*}

We start with an embedding result given in \cite[Proposition 3.7]{VP1998}. 
%%%%%%%%%%%%%%%
%%%%%%%%%%%%%%%
%%%%%%%%%%%%%%%
\begin{prop} \label{emb}
If $\Omega$ is a Lipschitz polyhedron, there exists $\delta \in (0,\frac{1}{2}]$ and $C > 0$ such that:
\begin{equation}\label{eqn:emb123}
    \|\bv\|_{H^{1/2+\delta}(\Omega)} 
    \leq C(\|\bv\|_{L^2(\Omega)}+\|\bcurl\bv\|_{L^2(\Omega)}+\|\Div \bv\|_{L^2(\Omega)})~~~~~ \forall \bv \in \VV^t \cup \VV^n.
\end{equation}
\end{prop}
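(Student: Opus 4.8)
The plan is to treat the two boundary-condition classes $\VV^t$ and $\VV^n$ separately, since they are mirror images of one another under the interchange of the roles of $\bcurl$ and $\Div$ together with the swap of the tangential and normal traces (and of the Dirichlet and Neumann problems below). For a field $\bv$ in either class I would build a Helmholtz-type splitting $\bv = \nabla\phi + \bw$ with $\Div\bw = 0$, arranged so that the scalar potential $\phi$ absorbs the full divergence $\Div\bv$ while the remainder $\bw$ carries the curl $\bcurl\bv$. Because $\Omega$ is contractible, there are no nontrivial harmonic Dirichlet or Neumann vector fields to obstruct this decomposition, so the splitting is exact and the two pieces can be estimated independently and recombined by the triangle inequality.

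For the gradient part I would choose the boundary condition on $\phi$ compatible with the trace carried by $\bv$. When $\bv\in\VV^t$ (so $\bv\times\bn=0$) I would solve the homogeneous Dirichlet problem $-\Delta\phi = -\Div\bv$ in $\Omega$ with $\phi=0$ on $\partial\Omega$, so that $\nabla\phi\times\bn$ vanishes on $\partial\Omega$; when $\bv\in\VV^n$ (so $\bv\cdot\bn=0$) I would solve the Neumann problem $-\Delta\phi=-\Div\bv$, $\partial_{\bn}\phi=0$, whose compatibility condition $\int_\Omega\Div\bv=\int_{\partial\Omega}\bv\cdot\bn=0$ is guaranteed. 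The key input is the sharp elliptic regularity of the scalar Laplacian on a Lipschitz polyhedron: for either homogeneous Dirichlet or Neumann data in $\bL^2(\Omega)$ the solution lies in $H^{3/2+\delta}(\Omega)$ for some geometric $\delta\in(0,1/2]$, with $\|\phi\|_{H^{3/2+\delta}(\Omega)}\le C\|\Div\bv\|_{L^2(\Omega)}$. This immediately yields $\nabla\phi\in H^{1/2+\delta}(\Omega)$ with the required bound.

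It remains to control the divergence-free remainder $\bw=\bv-\nabla\phi$, which satisfies $\Div\bw=0$, inherits the homogeneous tangential (resp. normal) trace, and has $\bcurl\bw=\bcurl\bv\in\bL^2(\Omega)$. Using contractibility I would represent $\bw$ through a vector potential $\bpsi$ with $\bw=\bcurl\bpsi$, fixing the gauge by $\Div\bpsi=0$ and imposing the complementary boundary condition; this reduces the estimate to a curl-curl / vector-Laplace problem with right-hand side $\bcurl\bv\in\bL^2(\Omega)$. The analogous polyhedral regularity for the vector problem with its natural boundary conditions then gives $\bw\in H^{1/2+\delta}(\Omega)$ and $\|\bw\|_{H^{1/2+\delta}(\Omega)}\le C(\|\bw\|_{L^2(\Omega)}+\|\bcurl\bv\|_{L^2(\Omega)})$, while $\|\bw\|_{L^2(\Omega)}$ is itself bounded by $\|\bv\|_{L^2(\Omega)}+\|\Div\bv\|_{L^2(\Omega)}$ from the first step. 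Adding the two contributions and shrinking $\delta$ to the smaller of the two exponents proves the stated inequality.

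The hard part is the \emph{strict} positivity of $\delta$. On a general Lipschitz domain one only has the endpoint $H^{3/2}$ for the Laplacian (the Jerison--Kenig result), i.e. the borderline $\delta=0$, which would be useless here. The extra regularity is genuinely a polyhedral phenomenon and rests on the Kondrat'ev--Grisvard--Dauge theory of edge and vertex singularities: the admissible singular exponents are governed by the spectra of Laplace--Beltrami operators on the two-dimensional corner cross-sections and the three-dimensional spherical vertex caps, and for a fixed polyhedron these stay bounded away from the critical threshold, producing a uniform geometric $\delta>0$. A secondary, more bookkeeping-type difficulty is verifying that the tangential/normal traces and the gauge conditions transfer correctly through the vector-potential representation, so that the vector-Laplace regularity applies with the right boundary operators; contractibility is precisely what makes this step clean.
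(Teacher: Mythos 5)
First, a point of orientation: the paper does not actually prove Proposition \ref{emb} --- it imports it verbatim from \cite[Proposition 3.7]{VP1998} --- so the comparison is with the proof in that source. The first half of your sketch is sound and coincides with the mechanism used there: peeling off a scalar potential that solves a homogeneous Dirichlet (for $\VV^t$) or Neumann (for $\VV^n$) problem, and invoking the Grisvard--Dauge $H^{3/2+\delta}$ shift for the scalar Laplacian on a polyhedron, is exactly what produces the strictly positive $\delta$, and your boundary-condition bookkeeping for $\nabla\phi$ is correct.

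The genuine gap is in your treatment of the divergence-free remainder $\bw=\bv-\nabla\phi$. After your splitting, $\bw$ still lies in $\VV^t$ (resp.\ $\VV^n$) with $\Div\bw=0$ and $\bcurl\bw=\bcurl\bv$, so the estimate $\|\bw\|_{H^{1/2+\delta}(\Omega)}\le C(\|\bw\|_{L^2(\Omega)}+\|\bcurl\bw\|_{L^2(\Omega)})$ \emph{is} the proposition restricted to divergence-free fields: the Helmholtz splitting has removed the divergence but none of the difficulty. Passing to a vector potential $\bpsi$ with $\bcurl\bpsi=\bw$, $\Div\bpsi=0$ does not reduce this to scalar theory, because the resulting vector Poisson problem carries the mixed conditions $\bpsi\cdot\bn=0$, $(\bcurl\bpsi)\times\bn=0$ (or their duals), which couple the components along the edges and at the vertices of the polyhedron; the $H^{3/2+\delta}$ regularity of that system is a theorem about the Hodge/Maxwell Laplacian (Costabel--Dauge), not an ``analogous'' instance of the scalar one, and it has essentially the same depth as the statement being proved. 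Calling this step bookkeeping is where the argument breaks. The standard repair --- and the route taken in \cite{VP1998} --- is to order the decomposition the other way: first produce a \emph{regular} field $\bm z\in\bH^1(\Omega)$ with $\bcurl\bm z=\bcurl\bv$, $\Div\bm z=0$ and $\|\bm z\|_{H^1(\Omega)}\le C\|\bcurl\bv\|_{L^2(\Omega)}$ (for $\bv\in\VV^t$ this is exactly Lemma \ref{ss} applied to $\bcurl\bv\in\bH_0({\rm div}^0,\Omega)$; no polyhedral regularity is needed for this step), then write the curl-free difference $\bv-\bm z=\nabla\chi$ and let the scalar potential $\chi$ absorb \emph{all} the singular behaviour. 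Only the scalar Dirichlet/Neumann regularity on the polyhedron is then invoked, and $\nabla\chi\in\bH^{1/2+\delta}(\Omega)$ together with $\bm z\in\bH^1(\Omega)\hookrightarrow\bH^{1/2+\delta}(\Omega)$ gives the claim.
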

%\begin{remark}
Here, the constant $\delta$ in \eqref{eqn:emb123}
may differ for $\bv\in \VV^t$ and $\bv\in \VV^n$.
Thus, we choose the smaller constant $\delta$ of these
two embeddings.
%\end{remark} 
Next, we use a result given in \cite[Theorem 2.2]{FE2006} where we use that $\Omega$ is a contractible, Lipschitz polyhedron.
%%%%%%%%%%%%%%%
%%%%%%%%%%%%%%%
%%%%%%%%%%%%%%%
\begin{lemma}\label{lemmaemb}
%Let $\Omega$ be a contractible, Lipschitz polyhedron, then 
There exists a positive constant $C$ such that for all $\bv \in \VV^t \cup \VV^n$, 
\begin{equation*}
    \|\bv\|_{L^2(\Omega)} \leq C (\|\bcurl\bv\|_{L^2(\Omega)}+\|\Div \bv\|_{L^2(\Omega)}).
\end{equation*}
\end{lemma}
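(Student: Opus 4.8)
The plan is to establish the inequality by a compactness–contradiction argument of Peetre–Tartar type, drawing on the regularity embedding of Proposition \ref{emb} together with the triviality of the space of harmonic vector fields on a contractible domain. Since $\VV^t \cup \VV^n$ is a union of two linear spaces, it suffices to prove the bound separately on $\VV^t$ and on $\VV^n$ and then take the larger of the two constants. I describe the argument for $\VV^t$; the case $\VV^n$ is identical except for the boundary condition imposed on the limit field.

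Suppose, for contradiction, that no such constant exists on $\VV^t$. After normalization there is a sequence $\bv_k \in \VV^t$ with $\|\bv_k\|_{L^2(\Omega)} = 1$ and $\|\bcurl \bv_k\|_{L^2(\Omega)} + \|\Div \bv_k\|_{L^2(\Omega)} \to 0$. By Proposition \ref{emb} the sequence is bounded in $H^{1/2+\delta}(\Omega)$, so by the Rellich compactness of the embedding $H^{1/2+\delta}(\Omega) \hookrightarrow \bL^2(\Omega)$ we may extract a subsequence (not relabeled) with $\bv_k \to \bv$ strongly in $\bL^2(\Omega)$; in particular $\|\bv\|_{L^2(\Omega)} = 1$. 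Since $\bcurl \bv_k \to 0$ and $\Div \bv_k \to 0$ in $\bL^2(\Omega)$, the distributional identities force $\bcurl \bv = 0$ and $\Div \bv = 0$, whence $\bv_k \to \bv$ in both $\bH(\mathbf{curl},\Omega)$ and $\bH({\rm div},\Omega)$. Continuity of the tangential trace on $\bH(\mathbf{curl},\Omega)$ then yields $\bv \times \bn = 0$ on $\partial\Omega$, so that $\bv \in \VV^t$.

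It remains to show that the limit field vanishes, contradicting $\|\bv\|_{L^2(\Omega)} = 1$. Because $\Omega$ is contractible and $\bcurl \bv = 0$, there is a scalar potential $\phi \in H^1(\Omega)$ with $\bv = \nabla \phi$. The condition $\bv \times \bn = 0$ means the surface gradient of $\phi$ vanishes on $\partial\Omega$, so $\phi$ is constant on each connected component of $\partial\Omega$; contractibility guarantees $\partial\Omega$ is connected, whence $\phi$ is constant on all of $\partial\Omega$. Combined with $\Delta \phi = \Div \bv = 0$, uniqueness for the Dirichlet problem gives $\phi \equiv \text{const}$, so $\bv = \nabla\phi = 0$, the desired contradiction. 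For $\VV^n$ one replaces $\bv \times \bn = 0$ by $\bv \cdot \bn = 0$, which translates to a homogeneous Neumann condition $\partial_n \phi = 0$, and concludes $\bv = 0$ from uniqueness (up to constants) for the harmonic Neumann problem.

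The boundedness, compactness, and trace-passage steps are routine. The main obstacle is the potential-theoretic conclusion on a merely Lipschitz polyhedron: one must justify the scalar-potential representation of a curl-free field (that is, the vanishing of the relevant de Rham cohomology) and the transfer of the homogeneous boundary conditions to the weak limit, both of which rest on the contractibility of $\Omega$ and the connectedness of $\partial\Omega$ rather than on any smoothness. Equivalently, this is exactly the assertion that a contractible Lipschitz polyhedron admits no nontrivial harmonic Dirichlet or Neumann fields, which is precisely where the topological hypothesis on $\Omega$ enters.
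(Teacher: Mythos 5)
Your argument is correct, but it is worth noting that the paper does not actually prove Lemma \ref{lemmaemb}: it simply invokes \cite[Theorem 2.2]{FE2006}, using only that $\Omega$ is a contractible Lipschitz polyhedron. What you have written is a correct, self-contained proof of that cited fact by the standard Peetre--Tartar compactness route. All the essential ingredients are in place and correctly deployed: Proposition \ref{emb} gives the uniform $H^{1/2+\delta}(\Omega)$ bound needed for Rellich compactness; the continuity of the tangential trace on $\bH(\mathbf{curl},\Omega)$ (respectively the normal trace on $\bH({\rm div},\Omega)$) lets you pass the boundary condition to the limit; and the final step is exactly the statement that a contractible domain with connected boundary carries no nontrivial harmonic Dirichlet or Neumann fields, which you reduce correctly to uniqueness for the scalar Laplace problem via the potential $\phi$ with $\bv=\nabla\phi$. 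Two small points deserve a sentence each if this were to be written out in full: first, that $\partial\Omega$ is connected for a bounded contractible domain in $\mathbb{R}^3$ (the number of boundary components is $1+b_2(\Omega)$, and contractibility kills $b_2$); second, in the Neumann case the cleanest conclusion is obtained weakly, by testing $\Div\bv=0$ against $\phi$ itself and using $\bv\cdot\bn=0$ to get $\|\nabla\phi\|_{L^2(\Omega)}^2=0$ directly, avoiding any discussion of pointwise normal derivatives on a Lipschitz boundary. Compared with the paper's citation, your approach costs a page but makes explicit exactly where the topological hypothesis on $\Omega$ enters; like the cited theorem, it yields no explicit constant, which is all that is needed here.
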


%%%%%%%%%%%%%%%%%%%%%%%%%%%%%%%%%%%%%%%%%%%%%%%%%%%%%%%%%%%
%%%%%%%%%%%%%%%%%%%%%%%%%%%%%%%%%%%%%%%%%%%%%%%%%%%%%%%%%%%
%%%%%%%%%%%%%%%%%%%%%%%%%%%%%%%%%%%%%%%%%%%%%%%%%%%%%%%%%%%

 \section{Finite element spaces on Worsey-Farin splits}\label{sec-WF}
 \subsection{Definitions and Notations}
 For a set of simplices $\calS$, we use $\Delta_s(\calS)$ to denote the set of 
$s$-dimensional simplices ($s$-simplices for short) in $\calS$.  
If $\calS$ is a simplicial triangulation of a domain $D$ with boundary, then $\Delta_s^I(\calS)$ denotes the 
subset of $\Delta_s(\calS)$ that \SG{does} not belong to the boundary
of the domain.  If $S$ is a simplex, then we use the convention $\Delta_s(S) = \Delta_s(\{S\})$.
 For a non-negative integer $r$, we use $\mathcal{P}_r(S)$ to denote the space of piecewise polynomials of degree $\leq r$ on $S$,
 and we define
 \begin{alignat*}{2}
& \pol_r(\calS) = \prod_{S\in \calS} \pol_r(S),\quad &&\pol_r^c(\calS) = \pol_r(\calS)\cap C^0(D),\quad \text{with }D = \bigcup_{S\in \calS} \bar S,\\
&\mathring{\pol}_r(\calS) = \pol_r(\calS)\cap L^2_0(D),\quad && \mathring{\pol}_r^c(\calS) = \{v\in \pol_r(\calS):\ v|_{\p D} = 0\},
 \end{alignat*}
where $L^2_0(D)$ is the space of square integrable functions
with vanishing mean.  Analogous vector-valued spaces are denoted in boldface, e.g., $\bpol_r^c(\calS)=[\pol_r^c(\calS)]^3$.

 Given a family of shape-regular, simplicial triangulations 
 $\{ \mathcal{T}_h\}$ of $\Omega$, let $h_T=$ diam$ (T)$, $\forall T \in \mathcal{T}_h$ and $h=\max_{T \in \mathcal{T}_h}h_T$. Since the meshes are shape regular, there exists a constant $c_0 >0 $ such that 
 \begin{equation}\label{def:calKDef} 
  h_T \le c_0 \rho_T, \quad \rho_T = \text{diameter of largest sphere contained in } \Bar{T}, 
 \end{equation}
 for all $T \in \mathcal{T}_h$. 
 We now describe the construction of a Worsey-Farin triangulation \cite{worsey1987ann} from the original triangulation $\calT_h$.  
 
 For an arbitrary tetrahedra $T$, we first show how to obtain 
 a local Worsey-Farin triangulation of $T$, denoted by  $T^{wf}$.  
 This is done via the following two steps (cf.~\cite[Section 2]{Ex2020})):
 \begin{enumerate}
\item  Connect the incenter $z_T$ of of $T$
 to its (four) vertices. 
\item For each face $F$ of $T$ choose $m_F \in  {\rm int}(F)$.  We then connect $m_F$ to the three vertices of $F$ and to the incenter $z_T$.
\end{enumerate} 
\noindent \SG{Here, ${\rm int}(F)$ denotes the interior of $F$.} This procedure divides $T$ into the twelve 
tetrahedra which define $T^{wf}$.

 To obtain a Worsey-Farin refinement $\mathcal{T}^{wf}_h$ of a triangulation $\mathcal{T}_h$ we split each $T \in \mathcal{T}_h$ by the above procedure. However, special care is needed in the choice of the point $m_F$. 
 For each interior face $F = \overline{T_1} \cap \overline{T_2}$ with $T_1$,
 $T_2 \in \calT_h$, let $m_F = L \cap F$ where $L = [{z_{T_1},z_{T_2}}]$, the line segment connecting the incenters of $T_1$ and $T_2$. The fact that such a $m_F$ exists is established in \cite[Lemma  16.24]{lai2007spline}. For a boundary face $F$ with $F = \overline{T} \cap \p\Omega$ with $T\in \calT_h$, let $m_F$ be the barycenter of $F$. In \cite{lai2007spline} it is conjectured that the resulting triangulation is shape regular. We will assume throughout that $\{\mathcal{T}^{wf}_h\}$ is shape regular with the regularity constant  related to the shape regularity constant of $\{\mathcal{T}_h\}$.  

For any $F \in \Delta_2(T)$, we see that the refinement $T^{wf}$ induces a Clough-Tocher triangulation of $F$, i.e.,
a triangulation consisting of three triangles, each having
the common vertex $m_F$; we denote this set by $F^{ct}$.
Let $e_F \in \Delta_1^I(F^{ct})$ be an arbitrary, but fixed internal edge of $F^{ct}$. 
Then further define 
\[
\mathcal{E}(\mathcal{T}_h^{wf}) = \{e \in \Delta_1^I(F^{ct}): \forall F \in \Delta_2^I(\mathcal{T}_h) \}.
\] 

Let $T_1$ and $T_2$ be adjacent tetrahedra in $\calT_h$ that share a face $F$.
Write  $F = [x_0,x_1,x_2]$ where $\{x_i\}_{i=0}^2$ are the vertices of $F$, and similarly
write $T_i = [a_i,x_0,x_1,x_2]$ for $i = 1,2$, where $a_i$ is the vertex of $T_i$ not shared by $F$.
 Let $z_{T_i}$ be the incenter of $T_i$ and set $K_i = [z_{T_i},x_0,x_1,x_2]$, $i = 1,2$.  
 We denote by $K^{wf}_i$  the triangulation 
$\calT_h^{wf}$ restricted to $K_i$, that is, $K^{wf}_i$ consists
of three tetrahedra which are each in $\bar T_i$ and have a single face that lie in $\bar F$.

Let $e \in \Delta_1^I(F^{ct})$ be one of the three internal edges in the Clough-Tocher refinement of $F$,
and let $\{K_i^j:\ j=1,2\}\subset K_i^{wf}$ be the two tetrahedra in $K_i^{wf}$
that have $e$ as an edge ($i=1,2$).  We assume that $K_i^j$ is labeled such that
$K_1^2$ and $K_2^1$ share a common face.
We then define
 \[
 \theta_e(p) = p|_{K_1^1}-p|_{K_1^2}+p|_{K_2^1}-p|_{K_2^2},~~~~~~ \text{on~} e.
 \]

%Meanwhile, 
Let $T\in \calT_h$, $F\in \Delta_2(\calT_h)$ with $F\subset \p T$,
and $e\in \Delta_1^I(F^{ct})$.  
Denote by $\bt_e$ be the unit vector tangent to $e$ pointing away from $m_F$,
and let $\bn_F$ be the outward unit normal of $T$ restricted to $F$. 
Then there exist triangles $Q_1$, $Q_2 \subset F^{ct}$ such that $e = \partial Q_1 \cap \partial Q_2$. 
The jump of a piecewise smooth function $p$ across $e$ is defined as
\[
\llbracket p \rrbracket_e = (p|_{Q_1}-p|_{Q_2})\bs,
\]
where $\bs = \bn_F \times \bt_e$ is a unit vector orthogonal to $\bt_e$ and $\bn_F$. 

%Some surface differential operators will be used to give the degrees of freedom in the next section. 
Let $\bp$ and $u$ be a smooth vector and scalar valued functions, respectively, on $T \in \calT_h$, and let $F \in \Delta_2(T)$. 
Then the tangential part of $\bp$ and $u$ are given by $\bp_F:= \bn \times \bp \times \bn$ and $u_F=u|_F$, respectively, where $\bn$ is the 
unit outward normal of $\partial T$. We have the following identities:
 \begin{equation*}
 \begin{aligned}
     \mathbf{curl}_F\, \bp_F = \mathbf{curl}\,\bp \cdot \bn,~~~~~ \mathbf{grad}_F\,u_F = \bn \times (\mathbf{grad}\,u \times \bn), ~~~~~ \text{on}~F.
 \end{aligned}    
 \end{equation*}

  \begin{figure}
      \centering
      \begin{subfigure}[b]{0.3\textwidth}
          \centering
          \includegraphics[width=\textwidth]{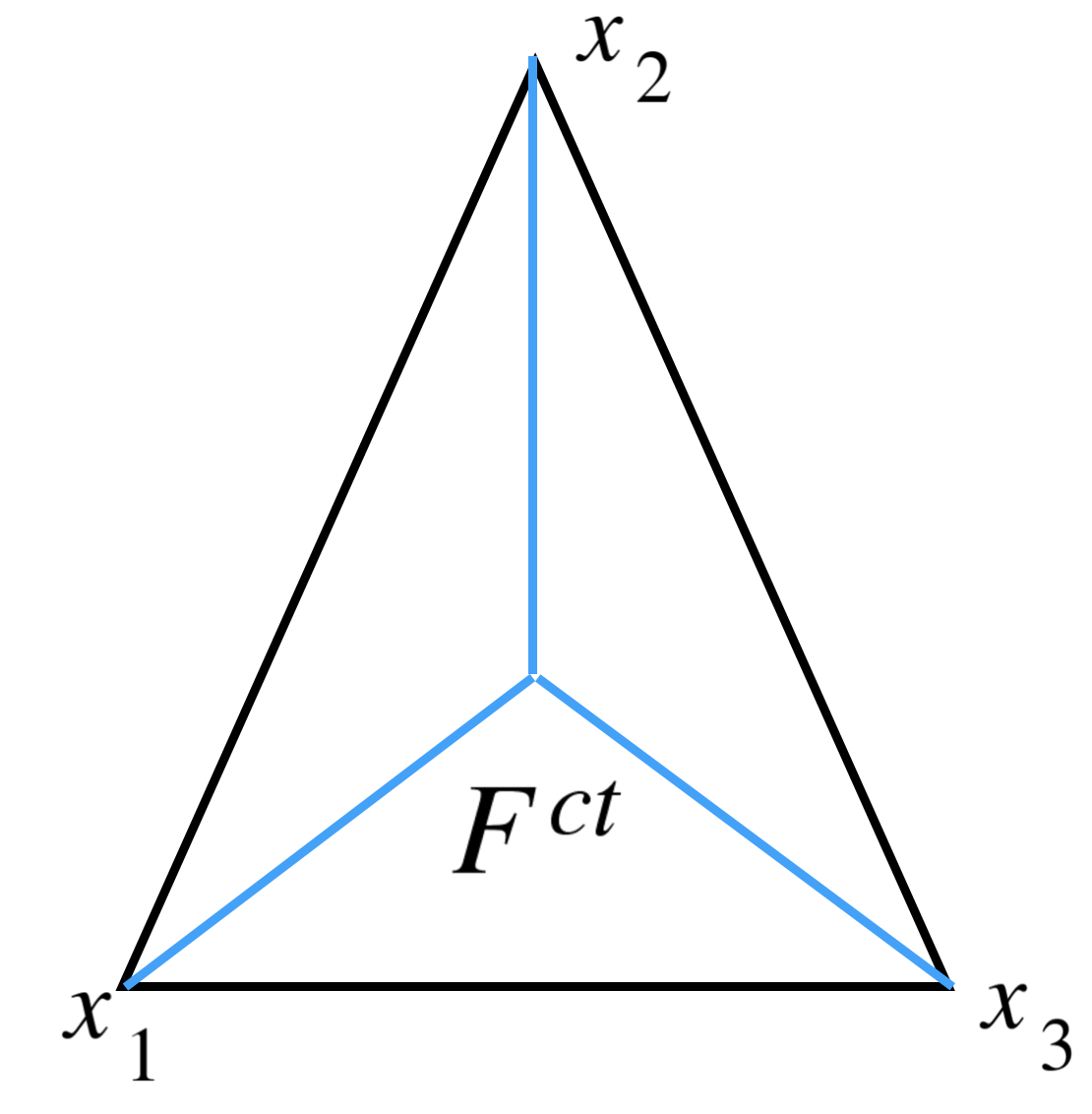}
          \caption{A representation of $F^{ct}$ and $\Delta_1^I(F^{ct})$ (indicated in blue).}
          \label{fig:Fct}
      \end{subfigure}
      \hfill
      \begin{subfigure}[b]{0.3\textwidth}
          \centering
          \includegraphics[width=\textwidth]{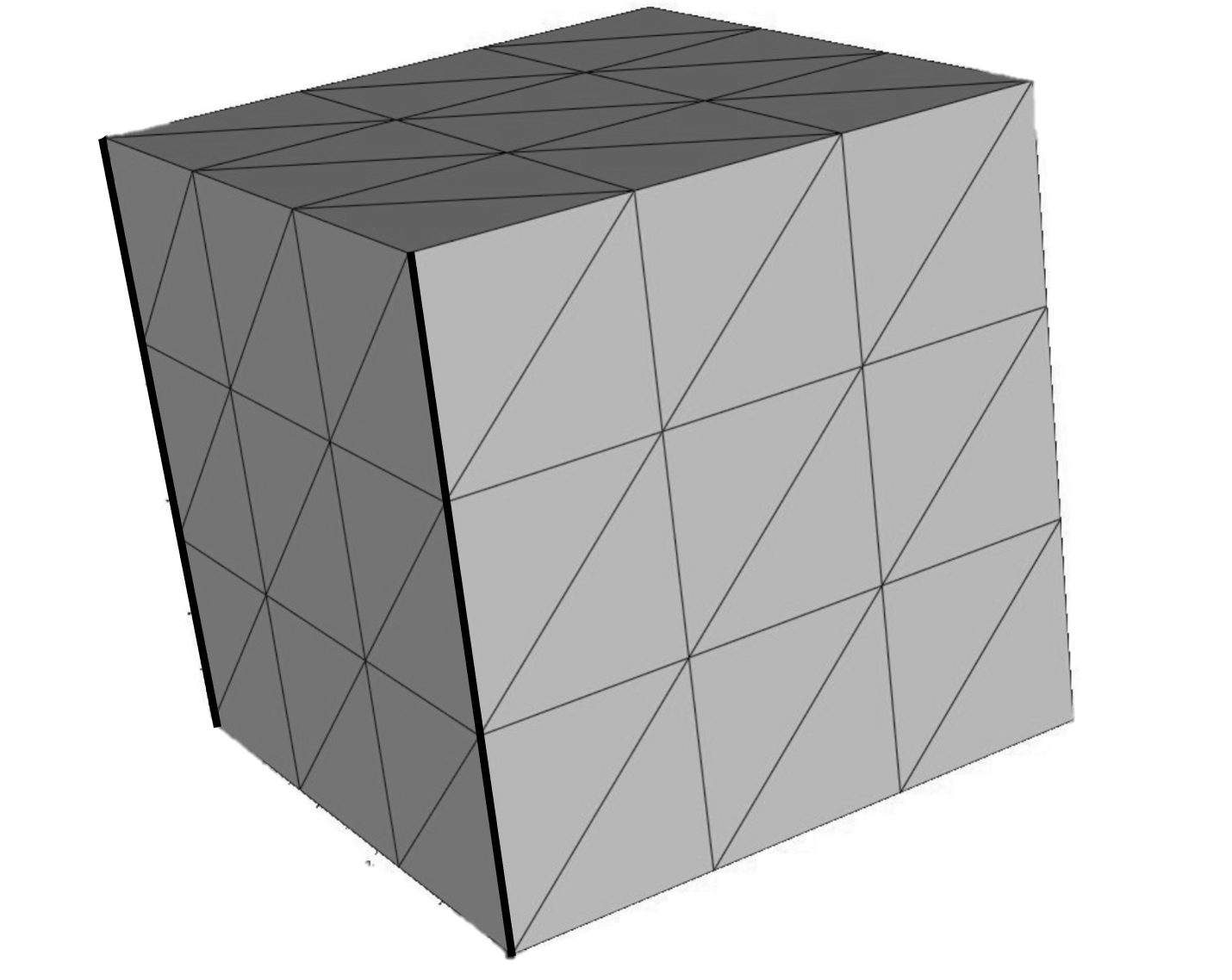}
          \caption{The original triangulation}
          \label{fig:Macro}
      \end{subfigure}
      \hfill
      \begin{subfigure}[b]{0.3\textwidth}
          \centering
          \includegraphics[width=\textwidth]{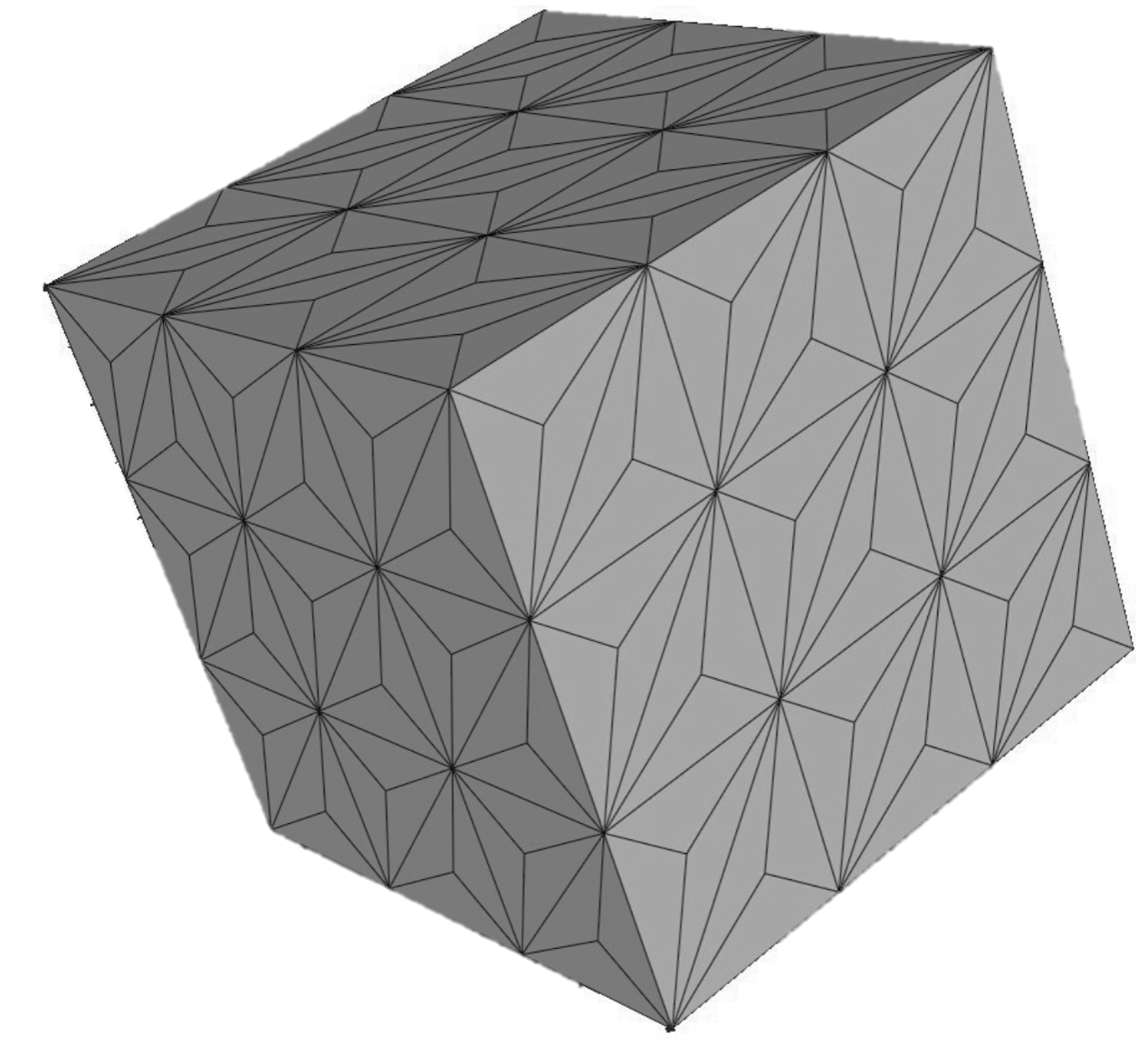}
          \caption{Worsey-Farin refinement}
          \label{fig:WF}
      \end{subfigure}
         \caption{The Worsey-Farin Splits}
         \label{fig:three graphs}
 \end{figure}
 
  %For better understanding of the following analysis, 
  We  define the function spaces with respect to a Clough-Tocher triangulation of a face $F$ (cf.~\cite[Section 3]{Ex2020}),
 \begin{equation*}
     \begin{split}
          & S_r(F^{ct}):=\{v \in \pol^c_{r}(F^{ct}): \mathbf{grad}_F\, v \in \boldsymbol{\mathcal{P}}_r^c(F^{ct})\}, \\
         & \mathcal{R}_{r}(F^{ct}) := S_r(F^{ct})\cap \mathring{\pol}_r^c(F^{ct}),\qquad
         %\{v \in S_r (F^{ct}): v|_{\partial F} =0 \},\quad
         \mathring{S}_r(F^{ct}):= \{v\in \mathcal{R}_r(F^{ct}):\ \mathbf{grad}_F\, v|_{\p F}=0\},
     \end{split}
 \end{equation*}
% where $L_0^2(F)$ is the space of square integrable functions on $F$ with vanishing mean.
and define function spaces with respect to the Worsey-Farin triangulation of a tetrahedron $T\in \calT_h$, %  are given by
  \begin{equation*}
     \begin{aligned}
%         & V_{r-2}^2(F^{ct}) := \{v \in L^2(F): v|_Q \in \mathcal{P}_r(F),~\forall Q \in F^{ct}\}, \quad \quad  \mathring{V}_{r-2}^2(F^{ct}) := V_{r-2}^2(F^{ct}) \cap L_0^2(F), \\
%         & S_r^0(F^{ct}):=\{v \in V_{r-2}^2(F^{ct}) \cap H(\mathbf{grad}_F;F): \mathbf{grad}_F~v \in [V_{r-2}^2(F^{ct}) \cap H(\mathbf{grad}_F;F)]^2\}, \\
%         & \mathcal{R}_{r-1}^0(F^{ct}) := \{v \in S_r^0(F^{ct}): v|_{\partial F} =0 \}, \\
%         & L_{r-1}(T^{wf}):=\boldsymbol{\mathcal{P}}^c_{r-1}(T^{wf}),\qquad &&\mathring{L}^1_{r-1}(T^{wf}):={L}^1_{r-1}(T^{wf})\cap \bH^1_0(T),\\
%         &V_{r-3}(T^{wf}):= \mathcal{P}_{r-3}(T^{wf}),\qquad &&\mathring{V}^3_{r-3}(T^{wf}):=V^3_{r-3}(T^{wf})\cap L^2_0(T),\\
         & S_{r}(T^{wf}):= \mathcal{P}_r(T^{wf})\cap H^2(T),\qquad 
         && \mathring{S}_{r}(T^{wf}):=S_{r}(T^{wf})\cap H^2_0(T),\\
         & \bN_{r}(T^{wf}) := \bpol_{r}(T^{wf}) \cap \bH({\rm div}, T).
     \end{aligned}
 \end{equation*}

 %\subsection{Degrees of freedom and equivalence of norms}
\subsection{Global spaces and degrees of freedom}
We define the global spaces (cf.~\cite[Section 6]{Ex2020})
 \begin{equation*}
     \begin{aligned}
         & \mathfrak{S}_{r} = \{v \in C^1(\Omega): q|_T \in S_r(T^{wf}), \forall T \in \mathcal{T}_h  \}, \\
         & \LL_{r-1} = \bpol_{r-1}^c(\calT_h^{wf}),\\ %\{\bv \in [C(\Omega)]^3, \bv|_T \in \bpol_{r-1}^c(T^{wf}),~\forall T \in \calT\} \\
         & \NN_{r-2} = \{\bv \in \bH({\rm div}, \Omega): \bv|_T \in \bN_{r-2}(T^{wf}),~\forall T \in \mathcal{T}_h, \theta_e(\bv \cdot \bt_e)=0 ~\forall e \in \mathcal{E}(\mathcal{T}_h^{wf}) \},\\
         & \WW_{r-3} = \pol_{r-3}(\calT_h^{wf}). %\{v \in L^2(\Omega), v|_T \in \pol_{r-3}(T^{wf}),~\forall T \in \calT\}.
     \end{aligned}
 \end{equation*}
 The \MJN{definitions of the finite element spaces
 show the following sequence forms a complex (cf.~\cite{Ex2020})}
  \begin{equation} \label{equ:exact}
     \mathbb{R}  \xrightarrow{\subset} \mathfrak{S}_{r} \xrightarrow{\mathbf{grad}} \LL_{r-1} \xrightarrow{\mathbf{curl}} \NN_{r-2} \xrightarrow{\Div} \WW_{r-3} \rightarrow 0.
 \end{equation}
We also consider the complex with boundary conditions:
  \begin{equation} \label{equ:bspace}
     0 \rightarrow \mathfrak{S}_{r}^0 \xrightarrow{\mathbf{grad}} \LL^t_{r-1} \xrightarrow{\mathbf{curl}} \NN^n_{r-2} \xrightarrow{\Div} \WW^0_{r-3} \rightarrow 0,
 \end{equation}
 where 
 \begin{equation} \label{def:bspace}
 \begin{aligned}
    & \mathfrak{S}_{r}^0 = \mathfrak{S}_r\cap H^1_0(\Omega),\quad
    %\{v \in \mathfrak{S}_{r}, v = 0 \text{~on~} \partial \Omega\},
     && \LL^t_{r-1} = \LL_{r-1}\cap \bH_0(\mathbf{curl},\Omega),\\
     %\{\bv \in \LL_{r-1}, \bv \times \bn = 0 \text{~on~} \partial \Omega\}, \\
     & \NN^n_{r-2} = \NN_{r-2}\cap \bH_0({\rm div},\Omega),\quad
     %\{\bv \in \NN_{r-2}, \bv \cdot \bn =0 \text{~on~} \partial \Omega\}, 
     && \WW^0_{r-3} = \WW_{r-3} \cap L^2_0(\Omega).
%     & \mathbf{\Psi}_{r-2} = \mathbf{ker}(\NN^n_{r-2}, \Div).
 \end{aligned}
 \end{equation}
 
 The following lemma summarizes the degrees of freedom (dofs) for $\LL_{r-1}$, $\NN_{r-2}$, and $\WW_{r-3}$ given in 
 \cite[\MJN{Lemmas 5.4--5.6}]{Ex2020}.
% is exact.  The proof of this latter result is given in the appendix.
 %%%%%%%%%%%%%%
 %%%%%%%%%%%%%%
 %%%%%%%%%%%%%%
 \begin{lemma}\label{lem:LLDOFs}
 Let $r\ge 3$.
 \begin{enumerate}[leftmargin=0.75\parindent]
\item A function $\btau \in \bpol^c_{r-1}(T^{wf})$ is uniquely defined by the following conditions:
 \begin{subequations} \label{equ:dofL}
    \begin{alignat}{2}
      \label{dofL1}
          &\btau(a)~~~~~~~~~~~&&\forall a \in \Delta_0(T) \\
           \label{dofL2}
          & \int_e\btau \cdot {\bm \kappa} \, ds ~~~~~~~&&\forall {\bm \kappa} \in \bpol_{r-3}(e),~\forall e \in \Delta_1(T),\\
           \label{dofL3}
           & \int_e \llbracket \mathbf{curl} ~\btau \cdot \bt_e \rrbracket_e \kappa \, ds ~~~~~~~&&\forall \kappa \in \mathcal{P}_{r-3}(e),~\forall e \in \Delta_1^{I}(F^{ct})\backslash \{e_F\},~\forall F \in \Delta_2(T),\\
            \label{dofL4}
           & \int_{e_F} \llbracket \mathbf{curl} ~\btau \cdot \bt_{e_F} \rrbracket_{e_F} \kappa \, ds ~~~~~~&&\forall \kappa \in \mathcal{P}_{r-2}(e_F),~\forall F \in \Delta_2(T),\\
            \label{dofL5}
           & \int_F (\btau \cdot \bn_F) \kappa \, dA ~~~~~~&&\forall \kappa \in \mathcal{R}_{r-1}(F^{ct}),~\forall F \in \Delta_2(T),\\
            \label{dofL6}
           & \int_F (\mathbf{curl}_F\btau_F )\kappa \, dA ~~~~&&\forall \kappa \in \mathring{\pol}_{r-2}(F^{ct}),~\forall F \in \Delta_2(T),\\
            \label{dofL7}
           & \int_F\btau_F \cdot \bkappa \, dA ~~~~&&\forall \bkappa \in \mathbf{grad}_F\,\mathring{S}_r(F^{ct}),~\forall F \in \Delta_2(T),\\
            \label{dofL8}
           & \int_T \bcurl\btau \cdot {\bm \kappa} \, dx~~~~~&&\forall {\bm \kappa} \in \bcurl \mathring{\bpol}_{r-1}^c(T^{wf}),\\ 
            \label{dofL9}
           & \int_T\btau \cdot \bkappa \, dx~~~~~&&\forall \bkappa \in \mathbf{grad}\, \mathring{S}_r(T^{wf}).
       \end{alignat}
\end{subequations}
 \item A function $\bp \in \bN_{r-2}(T^{wf})$  is uniquely defined by the following conditions:
  \begin{subequations} \label{equ:dofq}
      \begin{alignat}{2}
      \label{dofq1}
          & \int_e \llbracket \bp \cdot \bt_e \rrbracket_e \kappa\, ds ~~~~~~&&\forall \kappa \in \mathcal{P}_{r-3}(e),~\forall e \in \Delta_1^{I}(F^{ct})\backslash \{e_F\},~\forall F \in \Delta_2(T),\\
          \label{dofq2}
          & \int_{e_F} \llbracket \bp \cdot \bt_{e_F} \rrbracket_{e_F} \kappa\, ds ~~~~~~&&\forall \kappa \in \mathcal{P}_{r-2}(e_F),~\forall F \in \Delta_2(T),\\
          \label{dofq3}
          & \int_F (\bp \cdot \bn_F) \kappa\, dA ~~~~~~&&\forall \kappa \in \pol_{r-2}(F^{ct}),~\forall F \in \Delta_2(T),\\
          \label{dofq4}
          & \int_T (\Div \bp) \kappa\, dx ~~~~~&&\forall \kappa \in \mathring{\pol}_{r-3}(T^{wf}),\\
          \label{dofq5}
          & \int_T \bp \cdot \bkappa\, dx~~~~~&&\forall \bkappa \in \bcurl\mathring{\bpol}^c_{r-1}(T^{wf}).
      \end{alignat}
  \end{subequations}
 \item A function $v \in \pol_{r-3}(T^{wf})$ is uniquely defined by the following conditions:
  \begin{subequations} \label{equ:dofW}
      \begin{alignat}{1}
      \label{dofW1}
          & \int_T v\, dx\\
          \label{dofW2}
          & \int_T v \cdot \kappa \, dx~~~~~\forall \kappa \in \mathring{\pol}_{r-3}(T^{wf}).
      \end{alignat}
  \end{subequations}
  \end{enumerate}
 \end{lemma}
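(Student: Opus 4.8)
The plan is to read each of the three assertions as a finite-element unisolvence statement and to prove it in the standard two-move form: first check that the number of prescribed functionals equals the dimension of the ambient polynomial space, and then, the counts matching, show that a function annihilated by every listed functional must vanish. I would not compute the three dimensions from scratch; instead I would extract them from the exactness of the complexes \eqref{equ:exact} and \eqref{equ:bspace} together with the known dimension formulas for the scalar Clough--Tocher and Worsey--Farin spaces $S_r(F^{ct})$ and $S_r(T^{wf})$, using the rank--nullity theorem so that the count for one space in the sequence determines the next. The space $\WW_{r-3}=\pol_{r-3}(T^{wf})$ is immediate: writing $v=\bar v+\mathring v$ with $\bar v$ the mean of $v$ over $T$ and $\mathring v\in\mathring{\pol}_{r-3}(T^{wf})$, the functional \eqref{dofW1} isolates $\bar v$ while \eqref{dofW2}, tested against $\kappa=\mathring v$, forces $\|\mathring v\|_{L^2(T)}^2=0$, so $v\equiv 0$.

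For $\NN_{r-2}$ I would run the vanishing argument in the order dictated by the $H(\mathrm{div})$ structure. Assume all of \eqref{dofq1}--\eqref{dofq5} vanish. Since the normal trace $\bp\cdot\bn_F|_F$ lies in $\pol_{r-2}(F^{ct})$, testing \eqref{dofq3} against $\kappa=\bp\cdot\bn_F$ gives $\bp\cdot\bn_F=0$ on every $F\in\Delta_2(T)$, so $\bp$ has vanishing normal trace on $\p T$. The moments \eqref{dofq4} then show $\Div\bp$ is $L^2$-orthogonal to all mean-zero polynomials, hence constant, and the divergence theorem combined with the zero normal trace forces that constant to be zero, so $\Div\bp=0$. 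At this point $\bp$ is divergence free with zero normal trace, which by the local exactness underlying \eqref{equ:bspace} places it in $\bcurl\mathring{\bpol}^c_{r-1}(T^{wf})$; testing the remaining functionals \eqref{dofq5} against the curl of the potential of $\bp$ then yields $\|\bp\|_{L^2(T)}^2=0$. The edge functionals \eqref{dofq1}, \eqref{dofq2}, read together with the defining constraint $\theta_e(\bp\cdot\bt_e)=0$, are exactly what pin down the tangential jumps across the internal Clough--Tocher edges and must be carried along in the dimension count.

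The first and hardest case is $\LL_{r-1}$. Assume all of \eqref{dofL1}--\eqref{dofL9} vanish and peel off the trace on $\p T$ by dimension. The vertex values \eqref{dofL1} and the edge moments \eqref{dofL2} determine a degree-$(r-1)$ polynomial along each $e\in\Delta_1(T)$, so $\btau=0$ on all coarse edges. The heart of the matter is the face: on each $F$ the pair $(\btau_F,\btau\cdot\bn_F)$ is a function on the Clough--Tocher split $F^{ct}$, and the functionals \eqref{dofL5}--\eqref{dofL7}, which control the normal component, the surface curl $\mathbf{curl}_F\btau_F$, and the complementary tangential-gradient part, together with the curl-tangential-jump conditions \eqref{dofL3}, \eqref{dofL4} enforcing cross-edge regularity, reduce precisely to the two-dimensional unisolvence for the vector Clough--Tocher element on $F^{ct}$; this planar problem is handled exactly as in \cite{CLM2021}, yielding $\btau=0$ on $\p T$. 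Once $\btau\in\mathring{\bpol}^c_{r-1}(T^{wf})$, the interior functionals finish quickly: taking $\bkappa=\bcurl\btau$ in \eqref{dofL8} gives $\bcurl\btau=0$, so local exactness writes $\btau=\mathbf{grad}\,s$ with $s\in\mathring{S}_r(T^{wf})$, and then taking $\bkappa=\btau$ in \eqref{dofL9} gives $\|\mathbf{grad}\,s\|_{L^2(T)}^2=0$, whence $\btau\equiv 0$.

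I expect the genuinely delicate step to be the two-dimensional face analysis for $\LL_{r-1}$: the functionals \eqref{dofL3}--\eqref{dofL7} entangle the normal trace, the surface curl, and the tangential gradient of $\btau$ on the Clough--Tocher-split face, and disentangling them needs the planar theory of $S_r(F^{ct})$, $\mathcal{R}_r(F^{ct})$, and $\mathring{S}_r(F^{ct})$ along with careful bookkeeping of the one internal edge $e_F$ that is excluded from \eqref{dofL3} and treated separately in \eqref{dofL4}. The second potential pitfall is certifying that the number of functionals equals the dimension in each case; I would handle this not by enumerating each space but by propagating dimensions through the exact sequences \eqref{equ:exact}--\eqref{equ:bspace}, which is the route taken in \cite{Ex2020}.
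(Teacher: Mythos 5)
You should first be aware that the paper does not prove this lemma at all: it is imported verbatim from \cite[Lemmas 5.4--5.6]{Ex2020}, so your proposal has to stand on its own rather than be matched against an in-paper argument. Judged that way, the $\WW_{r-3}$ case is fine, but the other two parts have real problems. The most structural one is circularity in your dimension count: you propose to ``propagate dimensions through the exact sequences \eqref{equ:exact}--\eqref{equ:bspace},'' but at the point where this lemma is needed, those sequences are only known to be \emph{complexes}; exactness is asserted only in Proposition \ref{prop:exact} and is obtained there from the projections $\bPi_L$ and $\bPi_N$, which are themselves defined through the very degrees of freedom whose unisolvence you are trying to establish. The same logical order (dimension count and unisolvence first, exactness afterwards) is followed in \cite{Ex2020}, so you would need an independent count of $\dim\bpol^c_{r-1}(T^{wf})$ and $\dim\bN_{r-2}(T^{wf})$.

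There are also two concrete gaps in the vanishing arguments. First, in part (1) you fold \eqref{dofL3}--\eqref{dofL4} into a planar Clough--Tocher trace problem on $F^{ct}$, but $\bcurl\btau\cdot\bt_e$ contains the derivative of $\btau$ in the direction $\bn_F$ normal to the face, so these functionals are not determined by $\btau|_{\p T}$ and cannot be part of a two-dimensional trace analysis. Their actual role is to control $\bcurl\btau$ as an element of $\bN_{r-2}(T^{wf})$: they mirror \eqref{dofq1}--\eqref{dofq2}. The clean route is to prove part (2) first and then observe that if all of \eqref{equ:dofL} vanish, then $\bcurl\btau$ annihilates all of \eqref{equ:dofq} (the face moments via $\mathbf{curl}_F\btau_F=\bcurl\btau\cdot\bn_F$ together with \eqref{dofL6}, and the constant mode on $F$ via Stokes and \eqref{dofL1}--\eqref{dofL2}, exactly the computation in the proof of Lemma \ref{lem:Lest}); hence $\bcurl\btau=0$, $\btau$ is a gradient, and \eqref{dofL9} with the trace DOFs finishes. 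Second, in part (2) the step ``$\Div\bp=0$ and $\bp\cdot\bn|_{\p T}=0$ imply $\bp\in\bcurl\mathring{\bpol}^c_{r-1}(T^{wf})$'' is false without first invoking \eqref{dofq1}--\eqref{dofq2}: an element of $\bN_{r-2}(T^{wf})$ can be divergence free with vanishing normal trace and still carry nonzero tangential jumps $\llbracket\bp\cdot\bt_e\rrbracket_e$ across the internal edges of $F^{ct}$, in which case it is not the curl of a continuous piecewise polynomial; that is precisely why those edge functionals appear in the list, and they must be used to kill the jumps \emph{before} you appeal to any local exactness statement.
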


 %%%%%%%%%%%%%%%%%%%%%%%%%%%%%%%%%%%%%%%%%%%%%%%%%%%%%%%%%%%%%%%%%%%%%%%%%%%%%%%%%%
 \subsection{Equivalence norms assumptions} 
 We will use the above dofs to build Fortin-like projections. 
 Unfortunately, the dofs are not preserved with a Piola transform, as is the case for the N\'ed\'elec elements. Rather, our strategy to prove estimates of the projections is to map a tetrahedra to a unit size tetrahedra via dilation. To this end, we define scaled tetrahedra. 
%%%%%%%%%%%%%%%%%%%%
%%%%%%%%%%%%%%%%%%%%
%%%%%%%%%%%%%%%%%%%%
 \begin{definition} \label{def:ScaledT}
For a tetrahedron $T\in \calT_h$, define its dilation and induced Worsey-Farin split
 \begin{equation}\label{eqn:ScaledT}
 \hat T = \frac1{h_T} T:=\{\frac{x}{h_T}:\ x\in T\},\qquad \hat T^{wf} = \{\frac1{h_T}K:\ K\in T^{wf}\}.
 \end{equation}
\end{definition}
Note  the scaled tetrahedra $\hat T$ are of unit size, and $\hat T^{wf}$ inherit the shape-regular properties of $T^{wf}$.
 From now on, unless otherwise specified, we denote $\hat \bv(\hat x) = \bv(h_T \hat x)$ \MJN{($\hat x\in \hat T$)} where 
 $\bv$ is either a scalar function or vector-valued function.

As a first step, we note that Lemma \ref{lem:LLDOFs} and equivalence of norms in finite dimensional spaces immediately give the following lemma. 
%%%%%%%%%
%%%%%%%%%
%%%%%%%%%
 \begin{lemma} \label{lemmaL}
 Consider $T \in \mathcal{T}_h$. 
 \begin{enumerate}
\item  Let $G^L_i$, $i=1,...,m_L$ be the functional given by dofs \eqref{equ:dofL} \MJN{on $\hat T$}, where $m_L$ is the dimension of $\bpol^c_{r-1}(\hat T^{wf})$. Then, there exists a constant $\beta({\hat T^{wf}})>0$ such that
\begin{equation*}
\|\hat \bv\|_{L^2(\hat{T})} \le \beta({\hat T^{wf}}) \sum_{i=1}^{m_L} |G^L_i( \hat\bv)|, \quad \forall  \hat \bv \in \bpol^c_{r-1}(\hat T^{wf}). 
\end{equation*}
\item Let $G^N_i$, $i=1,...,m_N$ be the functional given by dofs \SG{\eqref{equ:dofq}} \MJN{on $\hat T$}, where $m_N$ is the dimension of $\bN_{r-1}(\hat T^{wf})$. Then, there exists a constant $\theta({\hat T^{wf}})>0$ such that
\begin{equation*}
\| \widehat \bv\|_{L^2(\hat T)} \le \theta({\hat T^{wf}}) \sum_{i=1}^{m_L} |G^N_i( \widehat \bv)|, \quad \forall  \widehat \bv \in \bN_{r-1}(\hat T^{wf}). 
\end{equation*}
\end{enumerate}
\end{lemma}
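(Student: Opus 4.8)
The plan is to derive both bounds from a single, entirely local principle: on a finite-dimensional space, any unisolvent collection of linear functionals induces a norm, and all norms on a finite-dimensional space are equivalent. The genuine work has already been done in Lemma~\ref{lem:LLDOFs}, which establishes unisolvence; given that, Lemma~\ref{lemmaL} is essentially immediate. I treat part (1) in detail, since part (2) follows by the identical argument with the obvious substitutions.

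First I would observe that $\hat T$ is itself a tetrahedron carrying an induced Worsey--Farin split $\hat T^{wf}$, so Lemma~\ref{lem:LLDOFs}(1) applies verbatim on $\hat T$: the functionals $G^L_1,\dots,G^L_{m_L}$ arising from the dofs \eqref{equ:dofL} are unisolvent on $\bpol^c_{r-1}(\hat T^{wf})$, meaning the only element annihilated by all of them is the zero function. Each $G^L_i$ is a finite combination of vertex evaluations, edge and face integrals of components (or jumps of curls) of $\hat\bv$ against fixed polynomials, and volume integrals; on the fixed, unit-size polynomial space $\bpol^c_{r-1}(\hat T^{wf})$ every such functional is a well-defined, continuous linear functional.

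Next I would set
\[
N(\hat\bv) := \sum_{i=1}^{m_L} |G^L_i(\hat\bv)|, \qquad \hat\bv\in \bpol^c_{r-1}(\hat T^{wf}).
\]
Linearity of the $G^L_i$ makes $N$ positively homogeneous and subadditive, hence a seminorm, and unisolvence upgrades it to a norm: if $N(\hat\bv)=0$ then $G^L_i(\hat\bv)=0$ for every $i$, forcing $\hat\bv=0$. Since $\bpol^c_{r-1}(\hat T^{wf})$ is finite-dimensional, the norms $N$ and $\|\cdot\|_{L^2(\hat T)}$ are equivalent, and the claimed estimate is precisely one direction of that equivalence, with $\beta(\hat T^{wf})$ the corresponding constant. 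For part~(2) I would repeat this argument, replacing $\bpol^c_{r-1}(\hat T^{wf})$ by $\bN_{r-1}(\hat T^{wf})$, the functionals $G^L_i$ by $G^N_i$, and invoking the unisolvence of the dofs \eqref{equ:dofq} from Lemma~\ref{lem:LLDOFs}(2) at the appropriate polynomial degree, which yields the constant $\theta(\hat T^{wf})$.

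I do not expect a real obstacle here, as there is no analytic content beyond the already-proved unisolvence. The only points worth verifying are that dilation to unit size cannot affect unisolvence---which is clear, since $\hat T^{wf}$ is again a genuine Worsey--Farin split---and that the constants $\beta,\theta$ are permitted to depend on the shape $\hat T^{wf}$, which the statement explicitly allows. (The later uniform control of these constants across the family, via shape-regularity, is a separate matter not needed for this lemma.)
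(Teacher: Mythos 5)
Your argument is exactly the one the paper intends: it states that Lemma~\ref{lemmaL} follows ``immediately'' from the unisolvence in Lemma~\ref{lem:LLDOFs} together with equivalence of norms on finite-dimensional spaces, which is precisely the norm-equivalence argument you spell out. The proposal is correct and takes the same route; your additional observation that the dofs define a genuine norm via unisolvence is the only content, and you have it.
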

 
In the following, we make an assumption concerning the uniform bound of the constants appearing in Lemma \ref{lemmaL}.  
%%%%%%%%%
%%%%%%%%%
%%%%%%%%%
\begin{assumption} \label{lem:equLN}
 There exists constants $\theta, \beta>0$ that depend only on the shape regularity of $\{\mathcal{T}_h\}$ such that 
 \begin{alignat*}{2}
\theta(\hat T^{wf}) \le & \theta, \quad && \forall T \in \mathcal{T}_h,\\
\beta(\hat T^{wf}) \le & \beta, \quad && \forall T  \in \mathcal{T}_h. 
 \end{alignat*}
\end{assumption}

\begin{remark}
One common approach (see for example \cite{walkington2014c}) to verify this assumption is to argue that the constants $\theta(\hat T^{wf})$ and $\beta(\hat T^{wf})$ are continuous functions of the vertices of $\hat T^{wf}$. If so, since the vertices live on a compact set, such a result implies the uniform bounds stated in Assumption \ref{lem:equLN}.
\end{remark}

 %%%%%%%%%%%%%%
 %%%%%%%%%%%%%%
 %%%%%%%%%%%%%%

\subsection{The modified Scott-Zhang interpolants}
The Fortin-like projections utilize Scott-Zhang-type 
interpolants onto piecewise linear polynomials
that preserve zero tangential or normal boundary conditions. The proof of the following lemma can be found in the appendix. To state the lemma we denote the patch around $T$: 
 \begin{equation*}
 \omega(T)=\bigcup\limits_{\substack{T'\in\mathcal{T}_h \\ \Bar{T}\cap\Bar{T'} \neq \emptyset }}T'.
 \end{equation*}
 %%%%%%%%%%%%%%%%%%%%%%%%%%%
 %%%%%%%%%%%%%%%%%%%%%%%%%%%
 %%%%%%%%%%%%%%%%%%%%%%%%%%%
 \begin{lemma} \label{lem: SZ}
 Let $0<\delta \leq \frac{1}{2}$. There exists a projection $\boldsymbol{I}_h^{curl}: \bH^{1/2+\delta}(\Omega)  \longrightarrow \boldsymbol{\mathcal{P}}_1^c(\mathcal{T}_h)$
 with the following bounds:
 \begin{equation}\label{SZest}
     h_T^{-1/2-\delta}\|\btau-\boldsymbol{I}_h^{curl}\btau\|_{L^2(T)}+\|\boldsymbol{I}_h^{curl}\btau\|_{H^{1/2+\delta}(T)} \leq C\|\btau\|_{H^{1/2+\delta}(\omega(T))}~~~~~~~~\forall\btau \in \bH^{1/2+\delta}(\Omega)
 \end{equation}
for all $T\in \calT_h$.
 Moreover,  if $\btau \in \bH^{1/2+\delta}(\Omega) \cap \bH_0(\mathbf{curl},\Omega)$ then $\boldsymbol{I}_h^{curl}\btau \in \boldsymbol{\mathcal{P}}_1^c(\mathcal{T}_h) \cap \bH_0(\mathbf{curl},\Omega)$. 
 
 There also exists a projection $\boldsymbol{I}_h^{div}: \bH^{1/2+\delta}(\Omega) \longrightarrow \boldsymbol{\mathcal{P}}_1^c(\mathcal{T}_h)$   with the following bounds:
 \begin{equation}\label{SZestd}
     h_T^{-1/2-\delta}\|\btau-\boldsymbol{I}_h^{div}\btau\|_{L^2(T)}+\|\boldsymbol{I}_h^{div}\btau\|_{H^{1/2+\delta}(T)} \leq C\|\btau\|_{H^{1/2+\delta}(\omega(T))}~~~~~~~~\forall\btau \in  \bH^{1/2+\delta}(\Omega)
 \end{equation}
for all $T\in \calT_h$. Moreover,  if $\btau \in \bH^{1/2+\delta}(\Omega) \cap \bH_0(\Div,\Omega)$  then  $\boldsymbol{I}_h^{div}\btau \in \boldsymbol{\mathcal{P}}_1^c(\mathcal{T}_h) \cap \bH_0(\Div,\Omega)$. 
 \end{lemma}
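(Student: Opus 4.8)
The plan is to adapt the classical Scott--Zhang construction to vector fields, choosing the supporting dual functionals so that they are bounded on $H^{1/2+\delta}$ and so that they respect the tangential (respectively normal) boundary condition. Because $\delta>0$, traces onto two-dimensional faces are well defined for $H^{1/2+\delta}$ functions while traces onto edges or vertices are not; hence every dual functional must be supported on a full face $F\in\Delta_2(\calT_h)$ rather than on an edge. For an interior vertex $a$ I would pick any $F\ni a$ and define the three Cartesian components of $\boldsymbol{I}_h^{curl}\btau(a)$ by the standard scalar Scott--Zhang functional $\btau_i\mapsto \int_F \btau_i\,\phi_a^F\,dA$, where $\phi_a^F$ is the $L^2(F)$-dual of the barycentric coordinate of $a$ on $F$. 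This reproduces linear polynomials, so the resulting operator is a projection onto $\boldsymbol{\mathcal{P}}_1^c(\calT_h)$.

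The heart of the matter, and where the two operators differ, is the treatment of boundary vertices. For $\boldsymbol{I}_h^{curl}$ I would use the tangential traces on the boundary faces meeting at $a$. At a vertex in the relative interior of a single flat boundary face $F$ with normal $\bn$, I split $\mathbb{R}^3=\mathrm{span}(\bn)\oplus T_F$ and define the tangential part of $\boldsymbol{I}_h^{curl}\btau(a)$ from $\btau_F=\bn\times\btau\times\bn$ via face-supported duals and the normal part from $\btau\cdot\bn$; if $\btau\times\bn=0$ on $\partial\Omega$ then $\btau_F=0$ and the tangential part vanishes, giving $\boldsymbol{I}_h^{curl}\btau(a)\parallel\bn$. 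At a vertex on a polyhedral edge shared by faces $F_1,F_2$ (and, similarly, at a corner) one face is not enough, since $\boldsymbol{I}_h^{curl}\btau(a)\parallel\bn_{F_1}$ does not force $\boldsymbol{I}_h^{curl}\btau(a)\times\bn_{F_2}=0$. Instead I would reconstruct the whole vertex value from both tangential traces: writing $\bt_\ell$ for the edge tangent and $\bs_i=\bn_{F_i}\times\bt_\ell$, the components along $\bt_\ell,\bs_1,\bs_2$ are determined by $\btau_{F_1}$ and $\btau_{F_2}$, and since $\{\bt_\ell,\bs_1,\bs_2\}$ spans $\mathbb{R}^3$ this fixes $\boldsymbol{I}_h^{curl}\btau(a)$ while still reproducing linear fields. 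When $\btau\in\bH_0(\bcurl,\Omega)$ both tangential traces vanish, so $\boldsymbol{I}_h^{curl}\btau(a)=0$ and the global condition $\boldsymbol{I}_h^{curl}\btau\times\bn_F=0$ holds at every boundary face through $a$. The operator $\boldsymbol{I}_h^{div}$ is built identically, with the tangential trace $\btau_F$ replaced by the normal trace $\btau\cdot\bn_F$ and the common-normal space replaced by the common-tangent space $U_a$ (two-dimensional at flat-face vertices, $\mathrm{span}(\bt_\ell)$ at edge vertices, $\{0\}$ at corners); in each case $\btau\cdot\bn=0$ on $\partial\Omega$ forces $\boldsymbol{I}_h^{div}\btau(a)\in U_a$.

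It remains to prove the quantitative bounds, which I expect to be routine via scaling. I would pass to the dilated element $\hat T=T/h_T$ of Definition \ref{def:ScaledT}, on which each functional $G_a$ is bounded on $H^{1/2+\delta}(\hat\sigma_a)$ with a constant depending only on shape regularity. The trace inequality \eqref{traceinq} with $s=1/2+\delta$ controls $\|\hat\btau\|_{H^\delta(\hat\sigma_a)}$, hence $|G_a(\hat\btau)|$, by the $H^{1/2+\delta}$ norm over the dilated patch, and the inverse estimates \eqref{equ:inv} together with the embedding \eqref{sobolevembedding} bound $\|\hat{\boldsymbol{I}}_h\hat\btau\|_{H^{1/2+\delta}(\hat T)}$. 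For the approximation term I would use that $\boldsymbol{I}_h$ reproduces constants, together with a fractional Bramble--Hilbert argument on the reference patch, to replace $\hat\btau$ by $\hat\btau-\bc$ and bound the result by $|\hat\btau|_{H^{1/2+\delta}}$ over the patch. Undoing the dilation via $\|\bv\|_{L^2(T)}=h_T^{3/2}\|\hat\bv\|_{L^2(\hat T)}$ and $|\bv|_{H^s(T)}=h_T^{3/2-s}|\hat\bv|_{H^s(\hat T)}$ produces exactly the factor $h_T^{-1/2-\delta}$ in front of the $L^2$ error and matches the norms on the right-hand side, yielding \eqref{SZest} and \eqref{SZestd}. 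The main obstacle is not these estimates but the boundary construction above: arranging the dual functionals so that the operator is simultaneously a projection onto $\boldsymbol{\mathcal{P}}_1^c$, stable in $H^{1/2+\delta}$, and annihilating the tangential (respectively normal) trace along the polyhedral edges and corners where several boundary faces meet.
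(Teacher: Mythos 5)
Your proposal is correct and follows essentially the same route as the paper's proof: a Scott--Zhang-type operator with face-supported dual functionals (exploiting that $H^{1/2+\delta}$ admits face traces but not edge or vertex traces), a reconstruction of the vertex value at polyhedral edges and corners from a spanning triple of directions adapted to the tangential (for $\boldsymbol{I}_h^{curl}$, the edge tangent $\bt_\ell$ and $\bn_{F_i}\times\bt_\ell$) or normal (for $\boldsymbol{I}_h^{div}$) traces, the projection property via reproduction of linears, and the estimates via scaling, an $L^1$-face trace inequality, inverse estimates, and a fractional Poincar\'e inequality on the patch. The only cosmetic difference is that at boundary vertices interior to a flat face the paper simply keeps the unmodified vector-valued Scott--Zhang functional on a boundary face rather than splitting into normal and tangential parts, which yields the same boundary behavior.
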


 %%%%%%%%%%%%%%%%%%%%%%%%%%%%%%%%%%%%%%%%%%%%%%%%%%%%%%%%%
 \section{Construction of Two Fortin-like Projections}\label{sec-Fortin}
In this section we define projections that conform to the framework
given in Section \ref{sec-framework}. Let 
\[ \bH^D(\bcurl,\Omega) := \{ \bv \in \bL^2(\Omega): \bcurl \bv \in \NN_{r-2}\} \subset \bH(\bcurl,\Omega). 
\]

\subsection{Definition of Fortin-like operators}
%%%%%%%%%%%%%%%%
%%%%%%%%%%%%%%%%
%%%%%%%%%%%%%%%%
The following operators are defined
through the use of Lemma \ref{lem:LLDOFs}. 
We separate those degrees of freedom into two parts with one part 
affecting commuting properties and the second one not.  
 \begin{definition} \label{def:PiL}
 Define the operator $\bPi_{L}:\bH^D(\bcurl,\Omega) \cap \bH^{1/2+\delta}(\Omega) \rightarrow \LL_{r-1}$ such that on each $T \in \calT_h$,
{\small  \begin{subequations} \label{equ:PiL}
     \begin{alignat}{2}
     \label{PiL1}
          & \int_e (\bPi_{L} \bv \cdot \bt_e) \kappa \, ds= \int_e  (\bv \cdot \bt_e) \kappa \,ds\quad&& \forall \kappa \in \mathcal{P}_{r-3}(e),~\forall e \in \Delta_1(T),\\
          \label{PiL2}
         & \int_e \llbracket \bcurl\bPi_{L} \bv \cdot \bt_e \rrbracket_e \kappa \, ds = \int_e \llbracket \bcurl \bv \cdot \bt_e \rrbracket_e \kappa \,ds \ \ \ && \forall \kappa \in \mathcal{P}_{r-3}(e),~\forall e \in \Delta_1^{I}(F^{ct})\backslash \{e_F\}, \\
         & &&\ \ \forall F \in \Delta_2(T), \nonumber\\
          \label{PiL3}
          & \int_{e_F} \llbracket \bcurl\bPi_{L} \bv \cdot \bt_{e_F} \rrbracket_{e_F} \kappa \, ds =\int_{e_F} \llbracket \bcurl\bv \cdot \bt_{e_F} \rrbracket_{e_F} \kappa \, ds \ \ \ &&\forall \kappa \in \mathcal{P}_{r-2}(e_F),\ \forall F \in \Delta_2(T),\\
          \label{PiL4}
          & \int_F (\mathbf{curl}_F\,(\bPi_{L}\bv)_F )\kappa \, dA =\int_F (\mathbf{curl}_F \bv_F )\kappa \, dA ~~~~&&\forall \kappa \in \mathring{\pol}_{r-2}(F^{ct}),~\forall F \in \Delta_2(T),\\
          \label{PiL5}
           & \int_T \bcurl \bPi_{L}\bv \cdot {\bm \kappa} \, dx=\int_T \bcurl \bv \cdot {\bm \kappa} \, dx~~~~~&&\forall \kappa \in \bcurl\mathring{\bpol}_{r-1}^c(T^{wf}), \\
           \label{PiL6}
          & (\bPi_{L} \bv)(a)=0 ~~~~~~&&\forall a \in \Delta_0(T),\\
          \label{PiL7}
          & \int_e (\bPi_{L} \bv \times \bt_e) \cdot \bkappa \, ds = 0 ~~~~~~ %\int_e (\bPi_{L} \bv \cdot \bn_2)\kappa  \, ds = 0 ~~~~~~ 
          &&\forall \bkappa \in \bpol_{r-3}(e),~\forall e \in \Delta_1(T), \\
          \label{PiL8}
          & \int_F (\bPi_{L} \bv \cdot \bn_F) \kappa \, dA = 0 ~~~~~~&&\forall \kappa \in \mathcal{R}_{r-1}(F^{ct}),~\forall F \in \Delta_2(T),\\
          \label{PiL9}
           & \int_F \bPi_{L}\bv_F \cdot \bkappa \, dA =0  ~~~~&&\forall \bkappa \in \mathbf{grad}_F\, \mathring{S}_r(F^{ct}),~\forall F \in \Delta_2(T),\\
           \label{PiL10}
           & \int_T \bPi_{L}\bv \cdot \bkappa \, dx=0~~~~~&&\forall \bkappa \in \mathbf{grad}\, \mathring{S}_r(T^{wf}).
     \end{alignat}
 \end{subequations}}
 \end{definition}
%%%%%%%%%%%%%%%%
%%%%%%%%%%%%%%%%
%%%%%%%%%%%%%%%%
 \begin{definition} \label{def:PiN}
 The operator $\bPi_N: \bH^{1/2+\delta}(\Omega) \cap \bH(\Div,\Omega) \rightarrow \NN_{r-2}$ is defined such that, on each $T \in \calT_h$,
{\small  \begin{subequations} \label{equ:PiN}
      \begin{alignat}{2}
      \label{PiN1}
          & \int_e \llbracket \bPi_{N} \bp \cdot \bt_e \rrbracket_e \kappa \, ds=0 ~~~~&&\forall \kappa \in \mathcal{P}_{r-3}(e),~\forall e \in \Delta_1^{I}(F^{ct})\backslash \{e_F\},~\forall F \in \Delta_2(T),\\
          \label{PiN2}
          & \int_{e_F} \llbracket \bPi_{N} \bp \cdot \bt_{e_F} \rrbracket_{e_F} \kappa \,ds=0 ~~~~&&\forall \kappa \in \mathcal{P}_{r-2}(e_F),~\forall F \in \Delta_2(T),\\
          \label{PiN3}
          & \int_F (\bPi_{N}\bp \cdot \bn_F) \kappa \, dA = \int_F (\bp \cdot \bn_F) \kappa \, dA ~~~~&&\forall \kappa \in \pol_{r-2}(F^{ct}),~\forall F \in \Delta_2(T),\\
          \label{PiN4}
          & \int_T (\Div \bPi_{N}\bp) \kappa \, dx=\int_T (\Div \bp) \kappa \, dx ~~~&&\forall \kappa \in \mathring{\pol}_{r-3}(T^{wf}),\\
          \label{PiN5}
          & \int_T \bPi_{N}\bp \cdot {\bm \kappa} \, dx=\int_T \bp \cdot \bkappa \, dx~~~&&\forall {\bm \kappa} \in \bcurl\mathring{\bpol}_{r-1}^c(T^{wf}).
      \end{alignat}
  \end{subequations}}
 \end{definition}
%%%%%%%%%%%%%%%%
%%%%%%%%%%%%%%%%
%%%%%%%%%%%%%%%%
 \begin{definition}\label{def:PiW}
 The operator $\Pi_W:L^2(\Omega)\to \WW_{r-3}$ is defined such that, on each
 $T\in \calT_h$,
  \begin{subequations}\label{equ:PiW}
  \begin{alignat}{1}
  \label{PiW1}
          & \int_T \Pi_W v\, dx = \int_T v \, dx, \\
           \label{PiW2}
          & \int_T (\Pi_W v)  \kappa \, dx = \int_T v \kappa \, dx~~~~~\forall \kappa \in \mathring{\pol}_{r-3}(T^{wf}).
      \end{alignat}
  \end{subequations} 
 \end{definition}
We modify the operator $\bPi_{L}$ to obtain an Fortin-like projection that inherits the commuting
properties of $\bPi_L$ and approximation properties of the Scott-Zhang interpolant $\bI_h^{curl}$.
%%%%%%%%%%%%%%%%
%%%%%%%%%%%%%%%%
%%%%%%%%%%%%%%%%
 \begin{definition}\label{def:PiV}
 The operator $\bPi_{V}:\bH^D(\bcurl,\Omega) \cap \bH^{1/2+\delta}(\Omega)  \rightarrow \LL_{r-1}$ is defined as
 \[
 \bPi_{V}=\bI_h^{curl}+\bPi_{L}({\bm 1}- \bI_h^{curl}),
 \]
 where ${\bm 1}$ is the identity operator and $\bPi_{L}$ is defined in Definition \ref{def:PiL}.
 \end{definition}
The next two theorems state the commuting properties of $\bPi_{V}$ and  $\bPi_N$ and their
approximation properties in the $L^2$-norm.
The proof of these theorems 
are postponed to Section \ref{sec-theProofsTh}.
%%%%%%%%%%%%%%%%
%%%%%%%%%%%%%%%%
%%%%%%%%%%%%%%%%
\begin{thm} \label{thm:PiV}
The operator $\bPi_{V}:\bH^D(\bcurl,\Omega) \cap \bH^{1/2+\delta}(\Omega) \rightarrow \LL_{r-1}$ defined in Definition \ref{def:PiV} satisfies 
 \begin{alignat}{1}
 \label{eqn:PiVCommute}
 \mathbf{curl}\,\bPi_{V} \btau &= \mathbf{curl} \,\btau, \quad \forall \btau \in \bH^D(\bcurl,\Omega) \cap \bH^{1/2+\delta}(\Omega).
 \end{alignat}
 Moreover, under Assumption \ref{lem:equLN}, the following bound holds for any $\btau \in \bH^D(\bcurl,\Omega) \cap \bH^{1/2+\delta}(\Omega)$:
 \begin{equation}\label{boundV}
  \|\bPi_{V} \btau - \btau\|_{L^2(\Omega)}\le C\big(h^{1/2+\delta} \|\btau\|_{H^{1/2+\delta}(\Omega)} + h \|{\bf curl} \btau \|_{L^2(\Omega)}\big).
\end{equation}
Finally, if $\btau \in  \bH^D(\bcurl,\Omega) \cap \bH^{1/2+\delta}(\Omega) \cap \bH_0(\mathbf{curl},\Omega)$ then $ \bPi_{V} \btau \in \LL_{r-1}^t$. 
 \end{thm}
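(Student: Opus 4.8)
The plan is to establish the three claims—the commuting property \eqref{eqn:PiVCommute}, the $L^2$ approximation bound \eqref{boundV}, and the boundary-conforming statement—in that order, leveraging the structure $\bPi_{V}=\bI_h^{curl}+\bPi_{L}(\bm 1-\bI_h^{curl})$ together with the degrees of freedom of Lemma \ref{lem:LLDOFs} and the stability of the modified Scott-Zhang interpolant from Lemma \ref{lem: SZ}. First I would prove the commuting property. The key observation is that the degrees of freedom \eqref{PiL2}--\eqref{PiL5} that define $\bPi_{L}$ are precisely the curl-controlling functionals of the dof set \eqref{equ:dofL}, namely \eqref{dofL3}, \eqref{dofL4}, \eqref{dofL6}, and \eqref{dofL8}. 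Since these together with the edge tangential dofs \eqref{dofL2} (matched by \eqref{PiL1}) uniquely determine $\bcurl$ of an element of $\bpol_{r-1}^c(T^{wf})$ inside $\NN_{r-2}$, matching them against $\bv$ forces $\bcurl\,\bPi_{L}\bv=\bcurl\,\bv$ for every admissible $\bv$. Applying this to $\bv=\btau-\bI_h^{curl}\btau$ and using linearity gives $\bcurl\,\bPi_{V}\btau = \bcurl\,\bI_h^{curl}\btau + \bcurl(\btau-\bI_h^{curl}\btau) = \bcurl\,\btau$. One must check that $\btau-\bI_h^{curl}\btau$ lies in the domain of $\bPi_{L}$, i.e.\ in $\bH^D(\bcurl,\Omega)\cap \bH^{1/2+\delta}(\Omega)$; this follows since $\bI_h^{curl}\btau\in\bpol_1^c(\calT_h)$ is smooth with curl in $\NN_{r-2}$ and $\btau$ is in the domain by hypothesis.

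Next I would prove the $L^2$ bound \eqref{boundV}. Writing $\bPi_{V}\btau-\btau=(\bI_h^{curl}\btau-\btau)+\bPi_{L}(\btau-\bI_h^{curl}\btau)$, the first term is controlled directly by the Scott-Zhang estimate \eqref{SZest} summed over $T\in\calT_h$, giving the $h^{1/2+\delta}\|\btau\|_{H^{1/2+\delta}(\Omega)}$ contribution. The heart of the matter is bounding $\bPi_{L}\bm w$ on each $T$ with $\bm w:=\btau-\bI_h^{curl}\btau$. Here I would pass to the reference configuration $\hat T$ of Definition \ref{def:ScaledT} by dilation, apply the norm-equivalence Lemma \ref{lemmaL} together with the uniform constant of Assumption \ref{lem:equLN} to write $\|\widehat{\bPi_L \bm w}\|_{L^2(\hat T)}\le \beta\sum_i |G_i^L(\widehat{\bPi_L\bm w})|$, and then estimate each functional $G_i^L$. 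The functionals fall into two groups: those inherited from $\bv$ in \eqref{PiL1}--\eqref{PiL5}, whose values equal the corresponding functionals of $\bm w$ and $\bcurl\,\bm w=\bcurl(\btau-\bI_h^{curl}\btau)$, and the homogeneous ones \eqref{PiL6}--\eqref{PiL10}, which vanish. For the inherited edge/face functionals I would use the trace inequality \eqref{traceinq}, the inverse estimates of Lemma \ref{lem:lp}, and the Scott-Zhang bounds \eqref{SZest} to control them by $\|\bm w\|_{H^{1/2+\delta}(\omega(T))}$ and $\|\bcurl\,\bm w\|_{L^2}$; after scaling back from $\hat T$ to $T$ and tracking the powers of $h_T$, the terms involving $\bm w$ itself produce the $h^{1/2+\delta}$ factor while those involving $\bcurl\,\bm w=\bcurl\,\btau-\bcurl\,\bI_h^{curl}\btau$ produce the $h\|\bcurl\,\btau\|_{L^2}$ factor. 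Summing over all $T$ and absorbing the patch overlaps (bounded by shape regularity) yields \eqref{boundV}.

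Finally, for the boundary-conforming claim, suppose $\btau\in\bH_0(\bcurl,\Omega)$ in addition. The second part of Lemma \ref{lem: SZ} gives $\bI_h^{curl}\btau\in\bpol_1^c(\calT_h)\cap\bH_0(\bcurl,\Omega)$, so it remains to show $\bPi_{L}\bm w\in\LL_{r-1}^t$ for $\bm w=\btau-\bI_h^{curl}\btau\in\bH_0(\bcurl,\Omega)$. Membership in $\LL_{r-1}^t=\LL_{r-1}\cap\bH_0(\bcurl,\Omega)$ is equivalent to the vanishing of the tangential trace on $\partial\Omega$, which for the Lagrange space $\LL_{r-1}=\bpol_{r-1}^c(\calT_h^{wf})$ is captured by the boundary degrees of freedom among \eqref{equ:dofL} restricted to faces and edges on $\partial\Omega$. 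On each boundary face the tangential trace of $\bPi_{L}\bm w$ is determined by the vertex values \eqref{PiL6}, the tangential edge moments \eqref{PiL7}, the face moments \eqref{PiL9}, and the tangential curl data \eqref{PiL2}--\eqref{PiL4}; the homogeneous conditions force the first three to vanish, and since $\bm w$ has vanishing tangential trace the curl-type data also vanish on $\partial\Omega$. I expect the main obstacle to be the second step—specifically the careful bookkeeping of scaling exponents when estimating the face functionals \eqref{PiL5} that involve $\bcurl\mathring{\bpol}_{r-1}^c(T^{wf})$, where the trace and inverse inequalities must be combined so that the fractional regularity $\tfrac12+\delta$ of $\bm w$ and the integer-order control of $\bcurl\,\bm w$ yield exactly the two separate powers $h^{1/2+\delta}$ and $h$ claimed, with constants independent of $h_T$.
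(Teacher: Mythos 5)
Your proposal follows essentially the same route as the paper: the paper first proves an intermediate result for $\bPi_L$ (Lemma \ref{lem:Lest}: the commuting property, an $L^2$ \emph{stability} bound $\|\bPi_L\btau\|_{L^2}\le C(\|\btau\|_{L^2}+h^{1/2+\delta}|\btau|_{H^{1/2+\delta}}+h\|\bcurl\btau\|_{L^2})$ obtained by dilation and the local estimate of Lemma \ref{lem: liftL}, and preservation of the tangential boundary condition), and then deduces all three claims for $\bPi_V$ from the decomposition $\bPi_V\btau-\btau=(\bI_h^{curl}\btau-\btau)+\bPi_L(\btau-\bI_h^{curl}\btau)$ exactly as you do. You inline the intermediate lemma rather than stating it separately, but the mathematical content is the same. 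Two specific points in your sketch need repair.

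First, in the $L^2$ bound you assert that the terms involving $\bcurl\,\bm w=\bcurl\btau-\bcurl\,\bI_h^{curl}\btau$ ``produce the $h\|\bcurl\btau\|_{L^2}$ factor.'' That is not quite right: $\bI_h^{curl}$ is not stable in $\bH(\bcurl)$, so $h\|\bcurl\,\bI_h^{curl}\btau\|_{L^2}$ cannot be bounded by $h\|\bcurl\btau\|_{L^2}$. The paper instead bounds $h\|\nabla\bI_h^{curl}\btau\|_{L^2(T)}\le Ch^{1/2+\delta}\|\bI_h^{curl}\btau\|_{H^{1/2+\delta}(T)}\le Ch^{1/2+\delta}\|\btau\|_{H^{1/2+\delta}(\omega(T))}$ via the inverse estimate \eqref{inv1} and the stability part of \eqref{SZest}; this contribution is absorbed into the \emph{first} term of \eqref{boundV}, not the second. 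Second, in the boundary-condition argument you list only homogeneous data (\eqref{PiL6}, \eqref{PiL7}, \eqref{PiL9}) and the curl-type data \eqref{PiL2}--\eqref{PiL4} as determining the tangential trace on a boundary face, omitting the \emph{inherited} edge tangential moments \eqref{PiL1}. These do not vanish for free: since $\bm w$ only has $\bH^{1/2+\delta}$ regularity, the edge integral $\int_e(\bm w\cdot\bt_e)\kappa\,ds$ is given meaning through the extension operator of Lemma \ref{lem:lift} and integration by parts, $\int_e(\bm w\cdot\bt_e)\kappa\,ds=\int_F(\bcurl\bm w)\cdot\bn_F\,E\kappa\,dA+\int_F(\bm w\times\bn_F)\cdot\mathbf{grad}\,E\kappa\,dA$, and one must invoke both $\bm w\times\bn_F=0$ and $\bcurl\bm w\cdot\bn_F=0$ on the boundary face to conclude it vanishes. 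Only then, combining the vanishing vertex values and edge tangential traces with $\mathbf{curl}_F(\bPi_L\bm w)_F=0$ (which itself needs the Stokes argument to recover the constant mode) and the exactness of the Clough--Tocher complex on $F$, does \eqref{PiL9} force $(\bPi_L\bm w)_F=0$. Your sketch skips both the non-homogeneous edge moments and the exactness step, which are the two places where the argument actually has content.
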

 We also state the analogous result for $\bPi_{N}$. 
 %%%%%%%%%%%%%%%%
%%%%%%%%%%%%%%%%
%%%%%%%%%%%%%%%%
\begin{thm} \label{thm:Nest}
 The operator $\bPi_{N}: \bH^{1/2+\delta}(\Omega) \cap \bH(\Div,\Omega) \rightarrow \NN_{r-2}$ defined in Definition \ref{def:PiL} satisfies
 \begin{equation} \label{eqn:PiNCommute}
 \Div \bPi_{N} \bp = \Pi_W \Div \bp \qquad \forall \bp \in \bH^{1/2+\delta}(\Omega) \cap \bH({\rm div},\Omega).
 \end{equation}
 Moreover, under Assumption \ref{lem:equLN}, the following bound holds for any $\bp \in \bH^{1/2+\delta}(\Omega) \cap \bH({\rm div},\Omega)$:
 \begin{equation}\label{boundN}
     ||\bPi_{N} \bp-\bp||_{L^2(\Omega)} \leq C\big(h^{1/2+\delta}||\bp||_{\bH^{1/2+\delta}(\Omega)} + h\|\Div \bp \|_{L^2(\Omega)}\big). 
 \end{equation}
 Finally, if $\bp \in  \bH^{1/2+\delta}(\Omega) \cap \bH_0({\rm div},\Omega)$ then $ \bPi_{N} \bp \in \NN_{r-2}^n$. 
 \end{thm}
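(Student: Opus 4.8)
The plan is to establish the three assertions of the theorem separately: the commuting relation \eqref{eqn:PiNCommute}, the $L^2$ estimate \eqref{boundN}, and the preservation of the zero normal trace. For the commuting relation I would note that both $\Div\bPi_N\bp$ and $\Pi_W\Div\bp$ belong to $\WW_{r-3}$ (the former because $\bPi_N\bp\in\NN_{r-2}$ and $\Div\NN_{r-2}\subset\WW_{r-3}$ by the complex \eqref{equ:exact}, the latter by Definition \ref{def:PiW}), so it suffices to show they agree against the degrees of freedom \eqref{dofW1}--\eqref{dofW2} of $\WW_{r-3}$ on each $T\in\calT_h$. Against \eqref{dofW2} this is immediate: for $\kappa\in\mathring{\pol}_{r-3}(T^{wf})$, \eqref{PiN4} gives $\int_T(\Div\bPi_N\bp)\kappa = \int_T(\Div\bp)\kappa$, which equals $\int_T(\Pi_W\Div\bp)\kappa$ by \eqref{PiW2}. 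Against \eqref{dofW1} I would integrate by parts, writing $\int_T\Div\bPi_N\bp = \sum_{F\in\Delta_2(T)}\int_F\bPi_N\bp\cdot\bn_F$, and then take $\kappa\equiv 1\in\pol_{r-2}(F^{ct})$ in \eqref{PiN3} to see each face term equals $\int_F\bp\cdot\bn_F$; the sum is $\int_T\Div\bp = \int_T\Pi_W\Div\bp$ by \eqref{PiW1}. Unisolvence of $\WW_{r-3}$ then yields \eqref{eqn:PiNCommute}.

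The $L^2$ estimate is the crux, and I would prove it by a scaling and norm-equivalence argument. First I would verify that $\bPi_N$ reproduces $\bpol_1^c(\calT_h)$: a globally continuous piecewise linear is smooth on each $T$, so its tangential jumps in \eqref{PiN1}--\eqref{PiN2} vanish, and it lies in $\bN_{r-2}(T^{wf})$, hence it equals its own image under $\bPi_N$ by unisolvence of \eqref{equ:dofq}. Setting $\bw = \bp - \bI_h^{div}\bp$ with $\bI_h^{div}$ from Lemma \ref{lem: SZ}, linearity gives $\bPi_N\bp-\bp = \bPi_N\bw - \bw$, so the task reduces to bounding $\|\bPi_N\bw\|_{L^2(T)}$, since $\|\bw\|_{L^2(T)}$ is controlled by \eqref{SZestd}. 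Dilating $T$ to the unit tetrahedron $\hat T$, Lemma \ref{lemmaL}(2) together with Assumption \ref{lem:equLN} bounds $\|\widehat{\bPi_N\bw}\|_{L^2(\hat T)}$ by $\theta\sum_i|G_i^N(\widehat{\bPi_N\bw})|$; by construction the jump functionals \eqref{PiN1}--\eqref{PiN2} vanish, while the remaining functionals reproduce the face-normal, divergence, and interior moments of $\hat\bw$, which I would estimate by $\|\hat\bw\|_{L^2(\partial\hat T)}$, $\|\Div\hat\bw\|_{L^2(\hat T)}$, and $\|\hat\bw\|_{L^2(\hat T)}$ respectively (Cauchy--Schwarz plus finite dimensionality of the fixed test spaces).

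I would then bound the boundary term by the trace inequality \eqref{traceinq}, obtaining $\|\hat\bw\|_{L^2(\partial\hat T)}\le C\|\hat\bw\|_{H^{1/2+\delta}(\hat T)}$, and scale back using $|\hat\bw|_{H^{1/2+\delta}(\hat T)} = h_T^{\delta-1}|\bw|_{H^{1/2+\delta}(T)}$, $\|\hat\bw\|_{L^2(\hat T)} = h_T^{-3/2}\|\bw\|_{L^2(T)}$, and $\|\Div\hat\bw\|_{L^2(\hat T)} = h_T^{-1/2}\|\Div\bw\|_{L^2(T)}$. Multiplying by the factor $h_T^{3/2}$ coming from $\|\bPi_N\bw\|_{L^2(T)} = h_T^{3/2}\|\widehat{\bPi_N\bw}\|_{L^2(\hat T)}$, the face and interior contributions become $\lesssim \|\bw\|_{L^2(T)} + h_T^{1/2+\delta}|\bw|_{H^{1/2+\delta}(T)}\lesssim h_T^{1/2+\delta}\|\bp\|_{H^{1/2+\delta}(\omega(T))}$ after invoking \eqref{SZestd}, while the divergence contribution becomes $h_T\|\Div\bw\|_{L^2(T)}\le h_T\|\Div\bp\|_{L^2(T)} + h_T\|\Div\bI_h^{div}\bp\|_{L^2(T)}$; the last piece is absorbed into the $H^{1/2+\delta}$ term by the inverse estimate \eqref{inv1} applied to the piecewise-linear $\bI_h^{div}\bp$. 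Summing over $T$ and using the finite overlap of the patches $\omega(T)$ gives \eqref{boundN}. For the boundary claim, if $\bp\in\bH_0(\Div,\Omega)$ then on a boundary face $F\subset\p\Omega$ the trace $\bp\cdot\bn_F$ vanishes (well defined since $\bp\in\bH^{1/2+\delta}(\Omega)$), so \eqref{PiN3} forces $\int_F(\bPi_N\bp\cdot\bn_F)\kappa\,dA = 0$ for all $\kappa\in\pol_{r-2}(F^{ct})$; since $\bPi_N\bp\cdot\bn_F\in\pol_{r-2}(F^{ct})$, choosing $\kappa = \bPi_N\bp\cdot\bn_F$ gives $\bPi_N\bp\cdot\bn_F = 0$ on every boundary face, whence $\bPi_N\bp\in\NN_{r-2}^n$.

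The main obstacle is the second and third steps taken together: organizing the scaling argument so that each group of degrees of freedom is paired with the correct negative power of $h_T$, and applying the trace and inverse inequalities on the reference element in a way that keeps the constants uniform via Assumption \ref{lem:equLN}. In particular, the delicate point is the divergence contribution, where the extra term $\|\Div\bI_h^{div}\bp\|_{L^2(T)}$ must be shown to be of higher order (through \eqref{inv1} and the $H^{1/2+\delta}$-stability of $\bI_h^{div}$ in \eqref{SZestd}) rather than degrading the intended $h\|\Div\bp\|_{L^2(\Omega)}$ scaling.
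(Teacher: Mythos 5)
Your proposal is correct and follows essentially the same route as the paper: the commuting property is verified against the two degrees of freedom of $\WW_{r-3}$ exactly as in the paper's proof, the $L^2$ bound uses the same decomposition $\bPi_N\bp-\bp=\bPi_N(\bp-\bI_h^{div}\bp)-(\bp-\bI_h^{div}\bp)$ together with the dilation/norm-equivalence estimate of Lemma \ref{lem: liftN} and the absorption of $h_T\|\Div \bI_h^{div}\bp\|_{L^2(T)}$ via the inverse estimate \eqref{inv1}, and the boundary claim is handled identically through \eqref{PiN3}. The only cosmetic difference is that you invoke \eqref{traceinq} on $\hat T$ directly where the paper routes through the reference element (Lemma \ref{lem:trace}) to make the trace constant uniform, a point you already flag.
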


%%%%%%%%%%%%%%%
%%%%%%%%%%%%%%%
%%%%%%%%%%%%%%%
\subsection{Bounds after Dilation}
\subsubsection{Some inequalities on scaled tetrahedra}
We first give several inequalities on the scaled tetrahedra $\hat T$ from Definition \ref{def:ScaledT}. The proofs of the
next three results are shown in the appendix.

%%%%%%%%%%%%%%%%
%%%%%%%%%%%%%%%%
%%%%%%%%%%%%%%%%
\begin{prop} \label{prop:dialT}
Under Definition \ref{def:ScaledT}, we have the following results:
\begin{itemize}
\item[(i)]  There holds ${\rm diam}(\hat T) =1 \le c_0 \rho_{\hat T}$, 
where $c_0$ is the shape-regularity constant given in Definition \ref{def:calKDef}.
\item[(ii)] $\widehat{\Div} \hat \bv = h_T \Div \bv$  and $\widehat{\bcurl} \hat \bv = h_T \bcurl \bv$. 
\item[(iii)] There holds for $0 \le s\le 1$  %(\cite[Lemma 2.9]{frac2014})
 \begin{equation*}
      c_0^{-3/2-s}h_T^{-3/2+s}|\bv|_{H^{s}(T)}\le  |\hat \bv|_{H^{s}(\hat T)} \le h_T^{-3/2+s}|\bv|_{H^{s}(T)}\qquad \forall \bv\in \bH^s(T).
 \end{equation*} 
 \end{itemize}
\end{prop}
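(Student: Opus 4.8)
The plan is to exploit that the map carrying $T$ to $\hat T$ in Definition \ref{def:ScaledT} is a pure dilation by the scalar $1/h_T$, so that $\hat\bv(\hat x)=\bv(h_T\hat x)$ and the change of variables $x=h_T\hat x$ has constant Jacobian $h_T^3$. Each of the three claims then reduces to elementary scaling, and the only real work is tracking the powers of $h_T$ correctly. Because a dilation is a similarity, no rotation, translation, or distortion of the reference configuration enters, and so I expect the identities in (iii) to hold with \emph{equality} rather than merely as two-sided bounds.

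For (i), I would note that dilation by $1/h_T$ multiplies every Euclidean length by $1/h_T$; in particular ${\rm diam}(\hat T)=h_T^{-1}{\rm diam}(T)=1$ since $h_T={\rm diam}(T)$, and the largest inscribed sphere of $\hat T$ is the image under the dilation of the largest inscribed sphere of $T$, whence $\rho_{\hat T}=h_T^{-1}\rho_T$. Combining with the shape-regularity bound $h_T\le c_0\rho_T$ from \eqref{def:calKDef} gives $1=h_T^{-1}h_T\le c_0 h_T^{-1}\rho_T=c_0\rho_{\hat T}$. As a byproduct, using also $\rho_T\le{\rm diam}(T)=h_T$, one obtains $h_T\le c_0\rho_T\le c_0 h_T$, hence $c_0\ge 1$, a fact I will reuse in (iii).

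For (ii), I would apply the chain rule to $\hat\bv(\hat x)=\bv(h_T\hat x)$, which gives $\partial_{\hat x_i}\hat v_j(\hat x)=h_T(\partial_{x_i}v_j)(h_T\hat x)$ for every component. Summing the diagonal entries yields $\widehat{\Div}\,\hat\bv=h_T\Div\bv$, and forming the appropriate antisymmetric combinations yields $\widehat{\bcurl}\,\hat\bv=h_T\bcurl\bv$, both understood under the hat convention $\hat w(\hat x)=w(h_T\hat x)$.

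For (iii), I would carry out the substitution $x=h_T\hat x$ directly in the defining (semi-)norms. When $s=0$ the $L^2$ integral contributes one Jacobian factor $h_T^3$, giving $|\hat\bv|_{H^0(\hat T)}=h_T^{-3/2}|\bv|_{H^0(T)}$. When $s=1$ the gradient contributes an extra factor $h_T$ via part (ii) while the volume integral contributes $h_T^3$, yielding the power $h_T^{-1/2}$. The regime demanding the most bookkeeping is $0<s<1$, where in the Gagliardo double integral the substitution sends $|\hat x-\hat y|$ to $h_T^{-1}|x-y|$ and contributes two Jacobian factors $h_T^{-3}$, so that the kernel exponent $|x-y|^{-(3+2s)}$ produces the net power $h_T^{(3+2s)-6}=h_T^{2s-3}$, i.e. $|\hat\bv|_{H^s(\hat T)}=h_T^{s-3/2}|\bv|_{H^s(T)}$. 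In every case one arrives at the exact identity $|\hat\bv|_{H^s(\hat T)}=h_T^{-3/2+s}|\bv|_{H^s(T)}$, which is precisely the asserted upper bound; the lower bound then follows immediately, since $c_0^{-3/2-s}\le 1$ by $c_0\ge 1$. The principal obstacle here is therefore notational rather than analytic: it lies in handling the three regimes of $s$ and their distinct Jacobian and kernel contributions uniformly, with the fractional case being the one to get right.
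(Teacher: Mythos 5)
Your proof is correct. Parts (i) and (ii) coincide with the paper's (very terse) argument: (i) is read off from the definition of $\hat T$ together with \eqref{def:calKDef}, and (ii) is the chain rule. For part (iii) you take a genuinely different, more elementary route: the paper simply invokes a general scaling lemma for fractional Sobolev seminorms under affine changes of variables (the same external lemma it reuses in Proposition \ref{prop:frac}), in which the two distinct constants of the two-sided bound come from $\|B\|$ and $\|B^{-1}\|$ of an arbitrary affine map --- that is why the statement carries the factor $c_0^{-3/2-s}$ at all. You instead exploit that the map here is a pure dilation and perform the substitution directly in the $L^2$, $H^1$, and Gagliardo seminorms, arriving at the exact identity $|\hat \bv|_{H^{s}(\hat T)} = h_T^{-3/2+s}|\bv|_{H^{s}(T)}$; your Jacobian and kernel bookkeeping in the fractional case ($h_T^{-6}$ from the two volume elements against $h_T^{3+2s}$ from the kernel) is right, and the stated lower bound then follows from $c_0\ge 1$, which you correctly extract from $\rho_T\le {\rm diam}(T)=h_T$ combined with \eqref{def:calKDef}. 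Your argument is self-contained and shows the bounds in the proposition are not sharp for a dilation; the paper's argument buys uniformity of tooling, since the general affine lemma is needed anyway for the non-dilation map $\phi_{\hat T}$ to the reference tetrahedron in the appendix.
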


%Since $\hat T$ is a class of tetrahedra with different shape but unit size, we need to always make sure the constants appearing in our bounds are uniform over all $\hat T$. To this end, we first use a trace inequality.
The next result establishes a trace inequality on $\hat T$.
%%%%%%%%%%%%%%%%
%%%%%%%%%%%%%%%%
%%%%%%%%%%%%%%%%
\begin{lemma} \label{lem:trace}
For any $\hat \bv  \in  \bH^{1/2+\delta}(\hat T)\ (\delta\in (0,\frac12])$, we have 
$\hat \bv|_{\p \hat T}\in \bL^p(\p \hat T)$ for $2 \le p < \frac2{1-\delta}$.  
In particular,
\[
\|\hat \bv\|_{L^p(\p \hat T)}\le C\|\hat \bv\|_{H^{1/2+\delta}(\hat T)}\qquad \forall \hat \bv\in \bH^{1/2+\delta}(\hat T),
\]
where $C$ is a uniform constant for all $\hat T$. 
\end{lemma}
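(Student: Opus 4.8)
The plan is to chain the trace inequality \eqref{traceinq} on $\hat T$ with the Sobolev embedding \eqref{sobolevembedding} applied on the two-dimensional faces of $\p\hat T$, and then to bridge the gap between the resulting critical exponent and a general subcritical $p$ via Hölder's inequality; the one genuinely delicate point is the uniformity of the constant in $\hat T$.

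First I would apply \eqref{traceinq} with $K=\hat T$, $s=\tfrac12+\delta$, and $p=2$. The hypotheses $0<s-1/p=\delta<1$ and $0\le s=\tfrac12+\delta\le 1$ are met precisely because $0<\delta\le\tfrac12$, so \eqref{traceinq} shows that $\hat\bv|_{\p\hat T}$ belongs to $W^{\delta,2}(\p\hat T)$ and
\[
\|\hat\bv\|_{W^{\delta,2}(\p\hat T)}\le C\|\hat\bv\|_{H^{1/2+\delta}(\hat T)}.
\]
Next I would descend from the manifold $\p\hat T$ to its four (flat, two-dimensional) faces $\hat F$. Because each face is planar, the Euclidean distance between two of its points equals the intrinsic distance used to define the $W^{\delta,2}(\p\hat T)$-seminorm, and restricting the defining nonnegative double integral to $\hat F\times\hat F$ gives $\|\hat\bv\|_{W^{\delta,2}(\hat F)}\le\|\hat\bv\|_{W^{\delta,2}(\p\hat T)}$. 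On each face I would invoke \eqref{sobolevembedding} with $K=\hat F$, $d=2$, $s=\delta$, $p=2$, $t=0$, and $q=\tfrac{2}{1-\delta}$: the balance condition $s-d/p=t-d/q$ reads $\delta-1=-\tfrac{2}{q}$, and $2\le q\le 4<\infty$ since $\delta\le\tfrac12$, so $\hat\bv\in\bL^{2/(1-\delta)}(\hat F)$ with norm bounded by $\|\hat\bv\|_{W^{\delta,2}(\hat F)}$. For $2\le p<\tfrac{2}{1-\delta}$, Hölder's inequality on a face of uniformly bounded area then yields $\|\hat\bv\|_{L^p(\hat F)}\le C\|\hat\bv\|_{L^{2/(1-\delta)}(\hat F)}$. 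Summing over the four faces and chaining the bounds back to the trace estimate gives the asserted inequality.

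The main obstacle is not this chain of embeddings but the claim that $C$ can be taken independent of $\hat T$. The constants appearing in \eqref{traceinq} and \eqref{sobolevembedding} a priori depend on the underlying simplex, so I would argue, as in the remark following Assumption \ref{lem:equLN}, that these constants depend continuously on the vertices of $\hat T$ and of its faces; by Proposition \ref{prop:dialT}(i) every $\hat T$ has unit diameter and satisfies $1\le c_0\rho_{\hat T}$, so the admissible vertex configurations lie in a compact set modulo rigid motions (which leave all the norms invariant), and a continuous function on a compact set is bounded. Equivalently, one may map each $\hat T$ affinely onto a fixed reference tetrahedron with uniformly bounded Jacobian and inverse Jacobian and verify that the fractional seminorms transform with uniformly controlled constants; either route produces a single $C$ valid for all $\hat T$, completing the argument.
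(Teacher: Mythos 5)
Your argument is correct, but it reaches the boundary estimate by a genuinely different chain than the paper. You take the trace first, at the Hilbert exponent $p=2$ (using \eqref{traceinq} with $s=\tfrac12+\delta$ to land in $W^{\delta,2}(\p\hat T)$), then restrict to the flat faces and apply the two-dimensional Sobolev embedding \eqref{sobolevembedding} with $d=2$ to reach the critical exponent $q=\tfrac{2}{1-\delta}$, finishing with H\"older on each face. The paper instead works entirely in three dimensions: it first applies \eqref{sobolevembedding} on the tetrahedron to pass from $H^{1/2+\delta}(\tilde T)$ to $W^{s,p}(\tilde T)$ with $s=\delta-1+3/p$ (the condition $p<\tfrac{2}{1-\delta}$ is exactly what makes $s>1/p$), and then invokes the $W^{s,p}$ trace theorem \eqref{traceinq} at the target exponent $p$ directly. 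Your route has the minor advantage of only needing the trace theorem in the $p=2$ case plus an elementary embedding on planar triangles; the paper's route avoids any discussion of fractional norms on the boundary manifold being restricted face by face. On the uniformity of $C$: your second suggestion (pull back to a fixed reference tetrahedron with the Jacobian bounds of Remark \ref{rem:MonkRegul} and the norm equivalences of Proposition \ref{prop:frac}) is precisely what the paper does, and is the route you should commit to; the continuity-plus-compactness argument you mention first is only a heuristic here --- the paper itself does not prove such continuity even where it relies on it (it is codified as Assumption \ref{lem:equLN} for the dof functionals), so it should not be presented as a self-contained justification.
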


The following lemma gives an inverse trace operator with estimate.
% (cf.~\cite[Theorem 1.5.1.2]{Elliptic2011}). 
% The proof of Lemma \ref{lem:lift} 
Its proof is given in \cite[Lemma 4.7]{VP1998};
however, for completeness, we provide the proof with additional details in the appendix. 
%We use this lemma to bound the tangential component of a function on an edge. 
%%%%%%%%%%%%%%%%
%%%%%%%%%%%%%%%%
%%%%%%%%%%%%%%%%
\begin{lemma} \label{lem:lift}
Let $\hat e\in \Delta_1(\hat T)$, and let $\hat F\in \Delta_2(\hat T)$ 
be a face that has $\hat e$ as an edge. 
Then, there exists an extension operator 
$E: \pol_{r-3}(\hat e) \rightarrow W^{1,q}(\hat T)$ with $1<q<2$ such that $(E \hat \kappa)|_{\hat e} = \hat \kappa$, $E \hat \kappa|_{\partial \hat F \backslash \hat e}=0$ and $E \hat \kappa|_{\partial \hat T \backslash \hat F}=0$. Moreover, the following estimates hold:
\begin{subequations}
\begin{alignat}{1}
    \| E\hat \kappa \|_{W^{1,q}(\hat F)} &\le C_1 \|\hat \kappa\|_{W^{1-1/q,q}(\hat e)}, \label{lem:lift1}\\
 \|E\hat \kappa\|_{W^{1,q}(\hat T)} &\le C_2 \|\hat \kappa\|_{W^{1-1/q,q}(\hat e)}, \label{lem:lift2}
\end{alignat}
\end{subequations}
where $C_1, C_2 >0$ are two constants uniform for all $\hat T$.
\end{lemma}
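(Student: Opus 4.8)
The plan is to build $E$ in two stages — first extending the edge datum $\hat\kappa$ from $\hat e$ to the face $\hat F$, and then extending the resulting face function to the tetrahedron $\hat T$ — where each stage couples an \emph{extension by zero} on the boundary with the inverse trace estimate \eqref{righttraceinq}. The single ingredient that makes the construction work, and the only place the hypothesis $1<q<2$ is genuinely used, is the following fact from the theory of fractional Sobolev spaces: for $0<s<1$ the extension-by-zero operator is bounded from $W^{s,q}$ of a subdomain into $W^{s,q}$ of the ambient set precisely when $sq<1$. With $s=1-1/q$ one has $sq=q-1<1$ iff $q<2$, so extension by zero is bounded in the trace space $W^{1-1/q,q}$ exactly under our hypothesis. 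I expect this threshold to be the main obstacle, and the sharpest point is that $\hat\kappa$ need not vanish at the endpoints of $\hat e$: its extension by zero then has a jump at each such vertex, and a jump lies in $W^{1-1/q,q}$ of a curve \emph{only} because $(1-1/q)q<1$.

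\emph{Stage one (edge to face).} I would first extend $\hat\kappa$ by zero from $\hat e$ to all of $\partial\hat F$, producing $\tilde\kappa_1\in W^{1-1/q,q}(\partial\hat F)$ with $\|\tilde\kappa_1\|_{W^{1-1/q,q}(\partial\hat F)}\le C\|\hat\kappa\|_{W^{1-1/q,q}(\hat e)}$ by the boundedness noted above (this is where the vertex jumps are absorbed). Applying the inverse trace estimate \eqref{righttraceinq} on the two-dimensional simplex $\hat F$ with $s=1$ and $p=q$ — so that the admissibility condition $0<s-1/p=1-1/q<1$ holds since $q>1$ — I obtain $E_1\hat\kappa\in W^{1,q}(\hat F)$ whose trace on $\partial\hat F$ equals $\tilde\kappa_1$; that is, $E_1\hat\kappa|_{\hat e}=\hat\kappa$ and $E_1\hat\kappa|_{\partial\hat F\backslash \hat e}=0$, with $\|E_1\hat\kappa\|_{W^{1,q}(\hat F)}\le C\|\hat\kappa\|_{W^{1-1/q,q}(\hat e)}$. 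This is exactly \eqref{lem:lift1}.

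\emph{Stage two (face to tetrahedron).} Using the elementary embedding $W^{1,q}(\hat F)\hookrightarrow W^{1-1/q,q}(\hat F)$ (same integrability, lower smoothness), I view $E_1\hat\kappa$ as an element of $W^{1-1/q,q}(\hat F)$. Since $E_1\hat\kappa$ vanishes on $\partial\hat F$, its extension by zero $\tilde\kappa_2$ to the surface $\partial\hat T$ again lies in $W^{1-1/q,q}(\partial\hat T)$, with norm controlled by $\|E_1\hat\kappa\|_{W^{1,q}(\hat F)}$ — here the trace-zero version of the extension-by-zero principle applies, valid since $q\neq 2$. A final application of \eqref{righttraceinq}, now on the three-dimensional simplex $\hat T$ with $s=1$, $p=q$, yields $E\hat\kappa:=E_2\tilde\kappa_2\in W^{1,q}(\hat T)$ with trace $\tilde\kappa_2$ on $\partial\hat T$. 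By construction $E\hat\kappa|_{\hat F}=E_1\hat\kappa$, so $E\hat\kappa|_{\hat e}=\hat\kappa$, $E\hat\kappa|_{\partial\hat F\backslash \hat F}=0$ on the remaining face edges, and $E\hat\kappa|_{\partial\hat T\backslash \hat F}=0$; chaining the three bounds gives \eqref{lem:lift2}. Choosing linear right inverses in \eqref{righttraceinq} at each stage makes $E$ linear on $\pol_{r-3}(\hat e)$.

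For the uniformity of $C_1,C_2$ over all $\hat T$: since every $\hat T$ has unit diameter and inherits the shape regularity of $\calT_h^{wf}$ (Proposition \ref{prop:dialT}(i)), each $\hat T$ is the image of a fixed reference tetrahedron under a bi-Lipschitz affine map whose norm and inverse norm are bounded solely in terms of the shape-regularity constant $c_0$. Transplanting the two fixed-element operators constructed above through these maps alters the constants only by factors depending on $c_0$, so the resulting $C_1,C_2$ are uniform across the family, as claimed.
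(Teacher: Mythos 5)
Your proposal is correct and follows essentially the same route as the paper: the same two-stage construction (extend $\hat\kappa$ by zero from $\hat e$ to $\partial\hat F$, lift to $\hat F$ via \eqref{righttraceinq}, extend by zero from $\hat F$ to $\partial\hat T$, lift to $\hat T$), with uniformity of the constants obtained by transporting the construction from a fixed reference tetrahedron via shape-regular affine maps. The only notable difference is that you invoke the general criterion that extension by zero is bounded on $W^{s,q}$ precisely when $sq<1$, whereas the paper verifies the two extension-by-zero bounds by direct estimation of the cross terms in the $W^{1-1/q,q}$ seminorm (using a polynomial inverse estimate to control $\|\hat\kappa\|_{L^\infty(\hat e)}$ in the first stage, and H\"older together with a Sobolev embedding of $W^{1,q}$ in the second); both justifications are valid.
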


\subsubsection{The estimates}
 The following two lemmas not only show that the operators $\bPi_L$ and $\bPi_N$ are well-defined on their respective domains but also 
 give local estimates on the tetrahedra after dilating.

%%%%%%%%%%%%%%%%
%%%%%%%%%%%%%%%%
%%%%%%%%%%%%%%%%
 \begin{lemma} \label{lem: liftL}
 Let $\hat{\bPi}_L$ be operator given in Definition \ref{def:PiL} defined on $\hat{T}$ \SG{\eqref{eqn:ScaledT}}. Under Assumption \ref{lem:equLN}, there holds
    \begin{equation}\label{eqn:hatPiLEstimate}
      \|\hat{\bPi}_L \hat \bv\|_{L^2(\hat{T})} \leq C(\|\widehat \bcurl \hat \bv\|^2_{L^2(\hat{T})}+\|\hat \bv\|_{H^{1/2+\delta}(\hat{T})}),
  \end{equation}
  for all $\hat \bv\in \bH^{1/2+\delta}(\hat T)$ with $\widehat \bcurl \hat \bv\in \bpol_{r-2}(\hat T^{wf})\cap \bH(\widehat{\rm div},\hat T)$.
  \end{lemma}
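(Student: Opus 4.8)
The plan is to reduce the bound to the equivalence-of-norms estimate of Lemma~\ref{lemmaL} and then to control, one at a time, the degrees of freedom of $\hat{\bPi}_L\hat\bv$. Since $\hat{\bPi}_L\hat\bv\in\bpol^c_{r-1}(\hat T^{wf})$, the first estimate in Lemma~\ref{lemmaL} together with Assumption~\ref{lem:equLN} gives
\[
\|\hat{\bPi}_L\hat\bv\|_{L^2(\hat T)}\le\beta\sum_{i=1}^{m_L}|G_i^L(\hat{\bPi}_L\hat\bv)|,
\]
so it suffices to bound each $G_i^L(\hat{\bPi}_L\hat\bv)$ by the right-hand side of \eqref{eqn:hatPiLEstimate}. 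Reading off Definition~\ref{def:PiL}, the functionals \eqref{dofL1}, \eqref{dofL5}, \eqref{dofL7}, \eqref{dofL9} and the component of \eqref{dofL2} perpendicular to $\bt_e$ are set to zero (by \eqref{PiL6}, \eqref{PiL8}, \eqref{PiL9}, \eqref{PiL10} and \eqref{PiL7}, respectively), so they contribute nothing.

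Next I would treat the curl-type functionals \eqref{dofL3}, \eqref{dofL4}, \eqref{dofL6}, \eqref{dofL8}, which by \eqref{PiL2}--\eqref{PiL5} equal the corresponding moments of $\widehat{\bcurl}\,\hat\bv$ (using the identity $\mathbf{curl}_F\hat\bv_F=\widehat{\bcurl}\,\hat\bv\cdot\bn_F$ for \eqref{dofL6}). Because $\widehat{\bcurl}\,\hat\bv\in\bpol_{r-2}(\hat T^{wf})\cap\bH(\widehat{\rm div},\hat T)$ is a piecewise polynomial with single-valued normal trace, its edge-jump, face-normal and interior moments are all well defined; applying Cauchy--Schwarz together with the polynomial trace and inverse inequalities of Lemma~\ref{lem:lp} and the equivalence of norms on the fixed finite-dimensional polynomial spaces on the unit-size $\hat T$ bounds each of these by $C\|\widehat{\bcurl}\,\hat\bv\|_{L^2(\hat T)}$.

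The only remaining, and main, difficulty is the tangential edge functional coming from \eqref{dofL2}/\eqref{PiL1}, namely $\int_{\hat e}(\hat\bv\cdot\bt_{\hat e})\kappa\,ds$ for $\hat e\in\Delta_1(\hat T)$. Since $\hat\bv\in\bH^{1/2+\delta}(\hat T)$ with $\delta\le\tfrac12$, its trace onto the one-dimensional edge $\hat e$ is not controlled by any standard trace theorem, so this integral must first be given meaning and then estimated. Here I would invoke the lifting of Lemma~\ref{lem:lift}: choose a face $\hat F\in\Delta_2(\hat T)$ with $\hat e\subset\partial\hat F$ and extend $\kappa$ to $E\kappa\in W^{1,q}(\hat T)$, $1<q<2$, with $(E\kappa)|_{\hat e}=\kappa$ and $E\kappa\equiv 0$ on $\partial\hat F\setminus\hat e$. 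Applying the two-dimensional Stokes/Green identity on $\hat F$ to $E\kappa\,\hat\bv_F$, using the surface-curl product rule and $\mathbf{curl}_F\hat\bv_F=\widehat{\bcurl}\,\hat\bv\cdot\bn_F$, the boundary term collapses (as $E\kappa$ vanishes on $\partial\hat F\setminus\hat e$) to the edge integral, yielding
\[
\int_{\hat e}(\hat\bv\cdot\bt_{\hat e})\kappa\,ds=\pm\int_{\hat F}E\kappa\,(\widehat{\bcurl}\,\hat\bv\cdot\bn_F)\,dA\pm\int_{\hat F}(\mathbf{grad}_F^{\perp}E\kappa)\cdot\hat\bv_F\,dA,
\]
where $\mathbf{grad}_F^{\perp}$ denotes the rotated surface gradient; this identity simultaneously defines the functional for low-regularity $\hat\bv$ and delivers the estimate.

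Finally I would estimate the two terms. The first is bounded by H\"older and the polynomial norm-equivalence/trace estimates, as above, by $C\|\widehat{\bcurl}\,\hat\bv\|_{L^2(\hat T)}$. The second is bounded by $\|\mathbf{grad}_F^{\perp}E\kappa\|_{L^q(\hat F)}\|\hat\bv_F\|_{L^{q'}(\hat F)}$, where $\|E\kappa\|_{W^{1,q}(\hat F)}$ is controlled through \eqref{lem:lift1} (and $\|\kappa\|_{W^{1-1/q,q}(\hat e)}\le C$ by norm equivalence on the fixed polynomial space on $\hat e$), while $\|\hat\bv_F\|_{L^{q'}(\hat F)}$ is controlled through the trace theorem $\bH^{1/2+\delta}(\hat T)\to\bH^{\delta}(\hat F)$ followed by the two-dimensional Sobolev embedding $H^{\delta}(\hat F)\hookrightarrow L^{q'}(\hat F)$. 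The latter forces $q'\le 2/(1-\delta)$, i.e. $q\ge 2/(1+\delta)$; since $\delta>0$ this is compatible with $q<2$, so an admissible $q\in(1,2)$ exists and gives $|\int_{\hat e}(\hat\bv\cdot\bt_{\hat e})\kappa|\le C(\|\widehat{\bcurl}\,\hat\bv\|_{L^2(\hat T)}+\|\hat\bv\|_{H^{1/2+\delta}(\hat T)})$. Summing all contributions yields \eqref{eqn:hatPiLEstimate}, and well-definedness of $\hat{\bPi}_L$ follows as a byproduct, since the single a priori ill-defined functional has now been assigned a stable value. The step I expect to be most delicate is justifying the Stokes identity for the merely $H^{\delta}$ tangential trace (via density and the $W^{1,q}$/$L^{q'}$ duality) and pinning down the admissible range of $q$.
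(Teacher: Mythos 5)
Your proposal is correct and follows essentially the same route as the paper: reduce to bounding the degrees of freedom via Lemma \ref{lemmaL} and Assumption \ref{lem:equLN}, bound the curl-type functionals by polynomial inverse/trace estimates, and handle the tangential edge functional by lifting $\kappa$ with Lemma \ref{lem:lift} and applying the surface Stokes identity together with the $L^p$ trace of Lemma \ref{lem:trace}, with the same admissible range $q\in(2/(1+\delta),2)$. The only cosmetic difference is that the paper performs a second integration by parts to turn the face integral $\int_{\hat F}E\hat\kappa\,(\widehat\bcurl\hat\bv\cdot\hat\bn)$ into a volume integral over $\hat T$, whereas you bound it directly on $\hat F$ using polynomial norm equivalence; both work.
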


\begin{proof}
We bound the corresponding non-zero functionals appearing on the right-hand side of Definition \ref{def:PiL}.
We start with the the following estimates,
which follow from H\"older's inequality and inverse estimates
\SG{\eqref{inv2}} that hold since $\widehat \bcurl \hat \bv$ is  a piecewise polynomial. 
\begin{alignat*}{2}
      |\int_{\hat e} \llbracket \widehat\bcurl \hat \bv \cdot \bt \rrbracket_{\hat e} \hat \kappa\,ds| 
      & \leq C\|\widehat\bcurl \hat \bv\|_{L^2( \hat{T})} \|\hat \kappa\|_{L^{2}(\hat e)} \quad &&\forall \hat \kappa \in L^{2}(\hat e),\ \forall  \hat e\in \Delta_1(\hat T^{wf}), \hat e \subset \hat F\\ %\ \forall \hat F\in \Delta_2(\hat T),\\
   |\int_{\hat{F}} (\widehat{\mathbf{curl}}_{\hat{F}} \hat \bv_{\hat{F}} )  \hat \kappa\,dA| 
   & \leq C\|\widehat \bcurl \hat \bv\|_{L^2( \hat{T})} \|\hat \kappa\|_{L^{2}(\hat{F})}\quad &&
       \forall \hat \kappa \in L^{2}(\hat F),\ \forall \hat F\in \Delta_2(\hat T),\\
    |\int_{\hat{T}} \widehat \bcurl \hat \bv \cdot \hat \bkappa\, dx| &\leq \|\widehat \bcurl \hat \bv\|_{L^2(\hat{T})} \|\hat \bkappa\|_{L^{2}(\hat{T})}\quad  &&\forall \hat \bkappa \in \bL^{2}(\hat T).
       \end{alignat*}

We now bound the remaining functionals coming from the right-hand side of \eqref{PiL1}
by adopting the technique developed in the proof of \cite[Lemma 4.7]{VP1998}. 
Let $\hat e \in \Delta_1(\hat T)$ and let $\hat F \in \Delta_2(\hat T)$ have $\hat e$ as an edge.  
We choose $p$ such that $2 < p < \frac2{1-\delta}$ in order to apply Lemma \ref{lem:trace}. Let $\hat \kappa \in \pol_{r-3}(\hat e)$,
and let $E\hat \kappa\in W^{1,p'}(\hat T)$ be as in Lemma \ref{lem:lift} (with $q=p'<2$, the H\"older conjugate of $p$).
Integration by parts and using $E \hat \kappa|_{\hat e} =\hat \kappa$ and $E \hat \kappa|_{\partial \hat{F}\backslash \hat e} =0$ gives
 \begin{equation*}
      \int_{\hat e}  (\hat \bv \cdot \hat \bt) \hat \kappa\, ds  =\int_{\hat F}  (\widehat \bcurl \hat \bv) \cdot \hat \bn  ~(E \hat \kappa)\,d\hat A
      +\int_{\hat F} (\hat \bv\times \hat \bn) \cdot  (\widehat{\mathbf{grad}}\,E\hat \kappa)\,d\hat A.
\end{equation*}
An additional integration by parts, using $E \hat \kappa|_{\partial \hat T \backslash \hat F}=0$, H\"olders inequality, estimate \eqref{lem:lift2}, \SG{an inverse estimate \eqref{inv1} \SG{on $\hat T^{wf}$} and the shape regularity of $\hat T^{wf}$} 
%(Lemma \ref{lem:splitF}) 
gives  
\begin{alignat*}{1}
|\int_{\hat F}  (\widehat \bcurl \hat \bv) \cdot \hat \bn  ~(E \hat \kappa)\,d\hat A|
= & |\int_{\hat T}  \widehat \bcurl \hat \bv \cdot \widehat{\mathbf{grad}}\, E \hat \kappa\, d\hat x| \\
\le & C \|\widehat \bcurl \hat \bv\|_{L^p(\hat{T})} \| \hat \kappa\|_{W^{1-1/p',p'}(\hat e)} \\
\le & C \|\widehat \bcurl \hat \bv\|_{L^2(\hat{T})} \| \hat \kappa\|_{W^{1-1/p',p'}(\hat e)}.
\end{alignat*}

Using H\"older's inequality, Lemma \ref{lem:trace}, and  \eqref{lem:lift2} we obtain
\begin{alignat*}{1}
|\int_{\hat F} (\hat \bv\times \hat \bn) \cdot  (\widehat{\mathbf{grad}}~E\hat \kappa)~d\hat A | \le  \|\hat \bv\times \hat \bn\|_{L^p(\hat F)}\|  E\hat \kappa\|_{W^{1,p'}(\hat F)}
\le  \SG{C} \|\hat \bv\|_{H^{1/2+\delta}(\hat T)}\| \hat \kappa\|_{W^{1-1/p',p'}(\hat e)}.
\end{alignat*}

Combining the above estimates and using Definition \ref{def:PiL} with Assumption \ref{lem:equLN} 
yields the result \eqref{eqn:hatPiLEstimate}.
\end{proof}

%%%%%%%%%%%%%%%%
%%%%%%%%%%%%%%%%
%%%%%%%%%%%%%%%%
We now derive a similar estimate for $\hat \bPi_N$. 
\begin{lemma} \label{lem: liftN}
 Let $\hat \bPi_N$ be the operator given
 in Definition \ref{def:PiN} defined on $\hat T$ \eqref{eqn:ScaledT}.
Under Assumption \ref{lem:equLN}, there holds with a uniform constant $C$ for all $\hat T$,
%  For the reference tetrahedron $\hat{T}$, 
%  $\hat{\bPi}_N v$ defined by Definition \ref{def:PiN} on $\hat{T}$ has the estimate that:
  \begin{equation}
      \|\hat{\bPi}_N\hat \bp \|_{L^2(\hat{T})} \leq C(\|\widehat\Div  \hat \bp\|_{L^2(\hat{T})}+ \|\hat \bp\|_{H^{1/2+\delta}(\hat{T})}) \quad \forall \hat\bp \in  \bH^{1/2+\delta}(\hat T) \cap \bH(\widehat{\rm div},\hat T). 
  \end{equation}
  \end{lemma}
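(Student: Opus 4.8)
The plan is to follow the template of the proof of Lemma \ref{lem: liftL}, reducing the $L^2$-norm of $\hat{\bPi}_N\hat\bp$ to a sum over the degrees of freedom and then bounding each nonzero one; the key simplification is that the dofs defining $\bPi_N$ involve only \emph{face} traces and volume integrals of $\hat\bp$, so no inverse-trace construction is required.

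First I would apply Lemma \ref{lemmaL}(2) together with Assumption \ref{lem:equLN} to obtain
\[
\|\hat{\bPi}_N\hat\bp\|_{L^2(\hat T)} \le \theta \sum_{i=1}^{m_N} |G_i^N(\hat{\bPi}_N\hat\bp)|,
\]
with $\theta$ uniform in $\hat T$. By Definition \ref{def:PiN}, each functional $G_i^N(\hat{\bPi}_N\hat\bp)$ either vanishes, namely the edge-jump conditions \eqref{PiN1}--\eqref{PiN2}, or equals the corresponding right-hand side applied to $\hat\bp$, namely the face condition \eqref{PiN3} and the two volume conditions \eqref{PiN4}--\eqref{PiN5}. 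Hence it suffices to bound these three families of functionals by the right-hand side of the claimed estimate.

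The two volume functionals are immediate. By H\"older's inequality and equivalence of norms on the fixed, finite-dimensional test spaces on the unit-size tetrahedron $\hat T$, I would estimate $|\int_{\hat T}(\widehat\Div\hat\bp)\hat\kappa\,dx| \le \|\widehat\Div\hat\bp\|_{L^2(\hat T)}\|\hat\kappa\|_{L^2(\hat T)} \le C\|\widehat\Div\hat\bp\|_{L^2(\hat T)}$ for $\hat\kappa\in\mathring\pol_{r-3}(\hat T^{wf})$, and likewise $|\int_{\hat T}\hat\bp\cdot\hat\bkappa\,dx| \le \|\hat\bp\|_{L^2(\hat T)}\|\hat\bkappa\|_{L^2(\hat T)} \le C\|\hat\bp\|_{H^{1/2+\delta}(\hat T)}$ for $\hat\bkappa\in\bcurl\mathring\bpol_{r-1}^c(\hat T^{wf})$.

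For the face functional in \eqref{PiN3} I would fix $p$ with $2<p<\frac{2}{1-\delta}$ and its conjugate $p'$, and apply H\"older's inequality on the flat face $\hat F$, giving $|\int_{\hat F}(\hat\bp\cdot\hat\bn_{\hat F})\hat\kappa\,dA| \le \|\hat\bp\cdot\hat\bn_{\hat F}\|_{L^p(\hat F)}\|\hat\kappa\|_{L^{p'}(\hat F)}$. Since $\hat\bn_{\hat F}$ is a constant unit vector on $\hat F$ and $\hat F\subset\partial\hat T$, the trace estimate of Lemma \ref{lem:trace} gives $\|\hat\bp\cdot\hat\bn_{\hat F}\|_{L^p(\hat F)} \le \|\hat\bp\|_{L^p(\partial\hat T)} \le C\|\hat\bp\|_{H^{1/2+\delta}(\hat T)}$, while $\|\hat\kappa\|_{L^{p'}(\hat F)}$ is bounded by a uniform constant by norm equivalence on $\pol_{r-2}(\hat F^{ct})$. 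Summing the finitely many contributions and invoking Assumption \ref{lem:equLN} then yields the claimed bound. The only place requiring care, and the sole reason this lemma is easier than Lemma \ref{lem: liftL}, is that every trace of $\hat\bp$ appearing lives on a two-dimensional face, where Lemma \ref{lem:trace} applies directly to $\hat\bp\in\bH^{1/2+\delta}(\hat T)$; no integration by parts with an extension operator (Lemma \ref{lem:lift}) is needed.
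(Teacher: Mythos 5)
Your proposal is correct and follows essentially the same route as the paper: reduce $\|\hat{\bPi}_N\hat\bp\|_{L^2(\hat T)}$ to the degree-of-freedom functionals via Lemma \ref{lemmaL}(2) and Assumption \ref{lem:equLN}, note that only \eqref{PiN3}--\eqref{PiN5} are nonzero, and bound the volume terms by Cauchy--Schwarz and the face term by the trace estimate of Lemma \ref{lem:trace}. The only cosmetic difference is that the paper takes $p=2$ in Lemma \ref{lem:trace} (so plain Cauchy--Schwarz suffices on $\hat F$) rather than your $2<p<\tfrac{2}{1-\delta}$; both are admissible, and your observation that no extension operator is needed here, in contrast to Lemma \ref{lem: liftL}, matches the paper exactly.
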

  \begin{proof}
 Using the estimate in Lemma \ref{lem:trace}, \MJN{the Cauchy-Schwarz inequality},
 %\SG{H\"older's inequality}, 
 along with $\widehat \Div \hat \bp\in L^2(\hat T)$, we obtain
\begin{alignat*}{2}
\Big| \int_{\hat F} (\hat \bp\cdot \hat \bn_{\hat F}) \hat \kappa\, d\hat A\Big|
&\le C \|\hat \bp\|_{H^{1/2+\delta}(\hat T)}\|\hat \kappa\|_{L^2(\hat F)}\quad &&\forall \hat \kappa\in L^2(\hat F),\ \forall \hat F\in \Delta_2(\hat T),\\
\Big|\int_{\hat T} (\widehat \Div \hat \bp)\hat \kappa\, d\hat x\Big|
&\le \|\widehat \Div \hat \bp\|_{L^2(\hat T)} \|\hat \kappa\|_{L^2(\hat T)}\quad &&\forall \hat \kappa\in L^2(\hat T),\\
\Big|\int_{\hat T} \hat \bp\cdot \hat \bkappa\, d\hat x\Big| 
&\le \|\hat \bp\|_{H^{1/2+\delta}(\hat T)} \|\hat \bkappa\|_{L^2(\hat T)}\quad &&\forall \hat \bkappa\in \bL^2(\hat T).
\end{alignat*}

These estimates, combined with Lemma \ref{lem:LLDOFs}, Definition \ref{def:PiN}, and Assumption \ref{lem:equLN} yield
  \begin{equation*}
   \|\hat{\bPi}_N \hat \bp \|_{L^2(\hat{T})} \leq C(\|\widehat \Div \hat \bp\|_{L^2(\hat{T})}+ \|\hat \bp\|_{H^{1/2+\delta}(\hat{T})}), 
  \end{equation*}
where $C$ is a uniform constant for all $\hat T$.
  \end{proof}

% %%%%%%%%%%%%%%%%%%%%%
% %%%%%%%%%%%%%%%%%%%%%
% %%%%%%%%%%%%%%%%%%%%%
\subsection{Proofs of Theorems \ref{thm:PiV}--\ref{thm:Nest}}\label{sec-theProofsTh}
In order to prove these theorems, we transfer the results for $\hat T$ back to $T$. We start with the proof of Theorem \ref{thm:Nest}.
\begin{proof}[Proof of Theorem \ref{thm:Nest}.]
 We first prove  \eqref{eqn:PiNCommute}. Let $\bp \in \bH^{1/2+\delta}(\Omega) \cap \bH({\rm div},\Omega)$ and  set $\rho = (\Div \bPi_{N} \bp - \Pi_W {\rm div} \bp) \in \WW_{r-3}$.
 %   then for any $T \in \mathcal{T}_h$,  $\rho_T = . 
 First, by \eqref{PiN3},  \eqref{PiW1} and Stokes theorem, we have on each $T\in \calT_h$,
   \begin{equation*}
       \int_T \rho\, dx = \int_T {\rm div}(\bPi_{N} \bp-\bp)\, dx = \int_{\partial T} (\bPi_{N} \bp-\bp) \cdot \bn \, dA =0,
   \end{equation*}
   where we use that the constant functions are in $\pol_{r-2}(F^{ct})$ . Next, for any $\kappa \in \mathring{\pol}_{r-3}(T^{wf})$, we use  \eqref{PiN4} and  \eqref{PiW2} to obtain  
  \begin{equation*}
 \int_T     \rho \kappa\, dx = \int_T \Div (\bPi_{N} \bp-\bp) \kappa \, dx=0.
  \end{equation*}
  Thus $\rho=0$ by \eqref{equ:dofW}, and so \eqref{eqn:PiNCommute} holds.

Next we prove the bound \eqref{boundN}. For any $T \in \calT_h$, with $\hat T$ defined in \eqref{eqn:ScaledT} and $\hat \bp (\hat x) = \bp(h_T \hat x)$, $\forall \hat x \in \hat T$, it is easy to check that $\widehat{\bPi_N \bp} = \hat \bPi_N \hat \bp$ by Definition \ref{def:PiN} and Lemma \ref{lem:LLDOFs}. With Lemma \ref{lem: liftN} and Proposition \ref{prop:dialT}, we have:
\begin{equation} \label{equ:PiNest}
\begin{split}
   \|\bPi_N \bp\|_{L^2(T)}^2 
   & = h_T^{3} \|\widehat{{\bPi}_N \bp}\|^2_{L^2(\hat{T})} 
   = h_T^{3} \|\hat{\bPi}_N \hat{\bp}\|^2_{L^2(\hat{T})} \\
   & \leq Ch_T^{3}(\|\widehat \Div \hat \bp\|^2_{L^2(\hat{T})}+ \|\hat \bp\|^2_{H^{1/2+\delta}(\hat{T})})
   \\
   & = Ch_T^{3}(\|\widehat \Div \hat \bp\|^2_{L^2(\hat T)} + |\hat \bp|^2_{H^{1/2+\delta}(\hat T)}+\|\hat \bp\|^2_{L^2(\hat T)}) \\
   & \leq C(h_T^2\|\Div \bp\|^2_{L^2(T)} + h_T^{1+2\delta}| \bp|^2_{H^{1/2+\delta}(T)}+\|\bp\|^2_{L^2(T)}),
\end{split}
\end{equation}
where the constant $C$ is independent of $h_T$.

Then by Lemma \ref{lem: SZ}, \eqref{equ:PiNest} and the inverse estimate \eqref{inv1} , we have
  \begin{equation*}
  \begin{split}
       \|\bPi_{N}\bp-\bp\|^2_{L^2(T)} & \leq 2\|\bPi_{N}\bp-\boldsymbol{I}_h^{div} \bp\|^2_{L^2(T)} +2\|\boldsymbol{I}_h^{div} \bp-\bp\|^2_{L^2(T)} \\
       & = 2\|\bPi_{N}(\boldsymbol{I}_h^{div} \bp-\bp)\|^2_{L^2(T)}+2\|\boldsymbol{I}_h^{div} \bp-\bp\|^2_{L^2(T)} \\
       & \leq C\big(h_T^2\|\Div \boldsymbol{I}_h^{div} \bp\|^2_{L^2(T)}+h_T^2\|\Div \bp\|^2_{L^2(T)} \\
      & \quad \qquad +h_T^{1+2\delta}|\boldsymbol{I}_h^{div} \bp-\bp|^2_{H^{\frac{1}{2}+\delta}(T)}  +\|\boldsymbol{I}_h^{div} \bp-\bp\|^2_{L^2(T)}\big)\\
      & \leq C(h^2_T\|\nabla \boldsymbol{I}_h^{div} \bp\|^2_{L^2(T)}+h_T^2\|\Div \bp\|^2_{L^2(T)}+h_T^{1+2\delta}\|\bp\|^2_{H^{\frac{1}{2}+\delta}(\SG{\omega(T)})})\\
      & \leq C\big(h_T^{1+2\delta}\| \boldsymbol{I}_h^{div} \bp\|^2_{H^{\frac{1}{2}+\delta}(T)}+h_T^{1+2\delta}\|\bp\|^2_{H^{\frac{1}{2}+\delta}(\SG{\omega(T)})}+h_T^2\|\Div \bp\|^2_{L^2(T)}\big) \\
      & \leq C\big(h_T^{1+2\delta}\|\bp\|^2_{H^{\frac{1}{2}+\delta}(\SG{\omega(T)})}+h_T^2\|\Div \bp\|^2_{L^2(T)}\big).
  \end{split}
  \end{equation*}
Summing this result over all tetrahedra $T\in \calT_h$ gives \eqref{boundN}.

Finally, if $\bp \in \bH^{1/2+\delta}(\Omega) \cap \bH_0({\rm div},\Omega)$, then it easily follows from \eqref{PiN3} that $\bPi_{N} \bp \cdot \bn=0$ on $\partial \Omega$,
which implies $\bPi_{N} \bp \in \NN_{r-2}^n$. 
\end{proof}

Next we turn our attention
to Theorem \ref{thm:PiV}.
To this end, we first state
and prove an intermediate result
for the operator $\bPi_L$.
% To prove Theorem \ref{thm:PiV} we need a related result for 
% the operator$\bPi_L$.
\begin{lemma} \label{lem:Lest}
Under Assumption \ref{lem:equLN}, the operator $\bPi_{L}: \bH^D(\bcurl,\Omega) \cap \bH^{1/2+\delta}(\Omega) \rightarrow \LL_{r-1}$ defined in Definition  \ref{def:PiL}  satisfies 
  \begin{equation}\label{eqn:PiLCommute}
 \mathbf{curl}\,\bPi_{L} \btau = \mathbf{curl}\,\btau\qquad \forall \btau\in \bH^D(\bcurl,\Omega) \cap \bH^{1/2+\delta}(\Omega).
 \end{equation}
 Moreover, the following bound holds for any $\btau \in \bH^D(\bcurl,\Omega) \cap \bH^{1/2+\delta}(\Omega)$,
 \begin{equation}\label{boundL}
 \|\bPi_{L} \btau\|_{L^2(\Omega)}\le C \big(\|\btau\|_{L^2(\Omega)}+ h^{1/2+\delta}|\btau|_{H^{1/2+\delta}(\Omega)}+ h \|{\bf curl} \btau\|_{L^2(\Omega)}\big).
 \end{equation}
Finally, if $\btau \in  \bH^D(\bcurl,\Omega) \cap \bH^{1/2+\delta}(\Omega) \cap \bH_0(\mathbf{curl},\Omega)$ then $ \bPi_{L} \btau \in \LL_{r-1}^t$. 
 \end{lemma}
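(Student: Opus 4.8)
The plan is to establish the three assertions separately: the commuting relation \eqref{eqn:PiLCommute}, the stability bound \eqref{boundL}, and the tangential boundary condition. The well-definedness of $\bPi_{L}$ and the local estimate on which the bound rests are already supplied by Lemma~\ref{lem: liftL} after dilation, so I would take those for granted. For the commuting property, I would set $\bw := \bcurl\bPi_{L}\btau - \bcurl\btau$. Since $\btau\in\bH^D(\bcurl,\Omega)$ we have $\bcurl\btau\in\NN_{r-2}$, and by the complex \eqref{equ:exact} also $\bcurl\bPi_{L}\btau\in\bcurl\LL_{r-1}\subset\NN_{r-2}$; hence $\bw\in\NN_{r-2}$. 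It then suffices to show every degree of freedom in \eqref{equ:dofq} vanishes on $\bw$, for then $\bw=0$ by the unisolvence in Lemma~\ref{lem:LLDOFs}. The edge functionals \eqref{dofq1}--\eqref{dofq2} vanish directly from \eqref{PiL2}--\eqref{PiL3}; the interior functional \eqref{dofq5} vanishes by \eqref{PiL5}; and \eqref{dofq4} vanishes automatically since $\Div\bw=\Div\bcurl(\bPi_{L}\btau-\btau)=0$.

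The only genuinely delicate functional is the face condition \eqref{dofq3}, which tests $\bw\cdot\bn_F$ against the full space $\pol_{r-2}(F^{ct})$, whereas the defining relation \eqref{PiL4} only matches the surface curl against the mean-zero subspace $\mathring{\pol}_{r-2}(F^{ct})$ (using the identity $\mathbf{curl}_F\bw_F=\bw\cdot\bn_F$). Writing $\pol_{r-2}(F^{ct})=\mathring{\pol}_{r-2}(F^{ct})\oplus\mathbb{R}$, it remains to treat the constant $\kappa\equiv1$, i.e.\ to show $\int_F\bw\cdot\bn_F\,dA=0$. With $\bw=\bcurl\bphi$ where $\bphi:=\bPi_{L}\btau-\btau$, Stokes' theorem on $F$ gives $\int_F\bw\cdot\bn_F\,dA=\oint_{\partial F}\bphi\cdot\bt\,ds$. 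Each macro-edge $e\subset\partial F$ lies in $\Delta_1(T)$, and since $r\ge3$ the constant belongs to $\pol_{r-3}(e)$; thus \eqref{PiL1} with $\kappa\equiv1$ gives $\int_e\bphi\cdot\bt_e\,ds=0$ on every such edge, so the boundary circulation vanishes and \eqref{dofq3} is verified. This completes \eqref{eqn:PiLCommute}.

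For the bound \eqref{boundL}, I would argue element by element after dilation. Writing $\widehat{\bPi_{L}\btau}=\hat\bPi_{L}\hat\btau$, which holds because the defining functionals are preserved under the dilation $\hat T=\tfrac1{h_T}T$, Lemma~\ref{lem: liftL} gives $\|\hat\bPi_{L}\hat\btau\|_{L^2(\hat T)}\le C(\|\widehat\bcurl\hat\btau\|_{L^2(\hat T)}+\|\hat\btau\|_{H^{1/2+\delta}(\hat T)})$. Scaling back with Proposition~\ref{prop:dialT}, using $\widehat\bcurl\hat\btau=h_T\,\widehat{\bcurl\btau}$ together with the $H^s$-seminorm scaling, converts this into $\|\bPi_{L}\btau\|_{L^2(T)}^2\le C(\|\btau\|_{L^2(T)}^2+h_T^{1+2\delta}|\btau|_{H^{1/2+\delta}(T)}^2+h_T^2\|\bcurl\btau\|_{L^2(T)}^2)$. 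Summing over $T\in\calT_h$, using $h_T\le h$ and $\sum_T|\btau|_{H^{1/2+\delta}(T)}^2\le|\btau|_{H^{1/2+\delta}(\Omega)}^2$, and taking square roots yields \eqref{boundL}.

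Finally, assuming $\btau\times\bn=0$ on $\partial\Omega$, I must show the tangential trace of $\bPi_{L}\btau$ vanishes on each boundary face $F$; since $\LL_{r-1}=\bpol_{r-1}^c(\calT_h^{wf})$ is $\bH(\bcurl)$-conforming, that trace is governed by the degrees of freedom on $F$ and its sub-simplices. The vertex values vanish by \eqref{PiL6}; on the boundary edges $\btau\cdot\bt_e=0$ and the remaining in-face component is controlled through \eqref{PiL1} and \eqref{PiL7}, all giving zero; and on the face interior the functionals \eqref{PiL4} and \eqref{PiL9} vanish because $\btau_F=0$ there (so $\mathbf{curl}_F\btau_F=0$). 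Hence $\bPi_{L}\btau\times\bn=0$ on $\partial\Omega$ and $\bPi_{L}\btau\in\LL_{r-1}^t$. I expect the main obstacle to be the face functional \eqref{dofq3} in the commuting argument, where the full test space $\pol_{r-2}(F^{ct})$ demanded by $\NN_{r-2}$ must be reconciled with the mean-zero space in the definition of $\bPi_{L}$, forcing the Stokes/circulation identity that ties the missing constant mode back to the tangential edge conditions \eqref{PiL1}; a secondary care point is justifying that the tangential trace of $\LL_{r-1}$ depends only on the boundary-supported degrees of freedom.
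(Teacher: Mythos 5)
Your proposal is correct and follows essentially the same route as the paper's proof: the commuting property via unisolvence of the DOFs \eqref{equ:dofq} with the constant mode of \eqref{dofq3} recovered by Stokes' theorem and the edge conditions \eqref{PiL1}, the bound via dilation to $\hat T$, Lemma \ref{lem: liftL}, and the scaling in Proposition \ref{prop:dialT}, and the boundary condition via the face/edge DOFs. The only places you are looser than the paper are in interpreting $\int_e(\btau\cdot\bt_e)\kappa\,ds=0$ on boundary edges (the paper justifies this through the integration-by-parts/extension-operator device, since $\btau$ has no classical edge trace) and in concluding $(\bPi_L\btau)_F=0$ from the vanishing functionals (the paper invokes the exactness of the Clough--Tocher complex together with \eqref{PiL9}), but these are presentational rather than substantive differences.
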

%We now prove Theorem \ref{thm:Lest}.
%
\begin{proof}
%[Proof of Theorem \ref{thm:Lest}.]
We first prove \eqref{eqn:PiLCommute}. Set $\bm \rho = \mathbf{curl}\,\bPi_{L} \btau - \mathbf{curl}\,\btau \in \NN_{r-2}$.
 Then by directly using the definition of $\bPi_{L}$, 
 we see that $\bm \rho$ vanishes on the DOFs \eqref{dofq1}--\eqref{dofq2},\eqref{dofq4}--\eqref{dofq5}, and
 \[
 \int_F (\bm \rho\cdot \bn_F)\kappa\, dA =0\qquad \forall \kappa\in \mathring{\pol}_{r-2}(F^{ct}),
 \]
 where we used the identity ${\bf curl}_F \btau_F = {\bf curl}\, \btau\cdot \bn_F$.
 Next, by Stokes Theorem
 \begin{equation*}
 \begin{split}
     \int_F \bm \rho \cdot \bn_F\, dA & = \int_F \mathbf{curl}_F( \bPi_{L} \btau-\btau)_F\, dA \\  
     &=\sum_{e\in \Delta_1(F)} \int_e  ( \bPi_{L} \btau - \btau)\cdot \bt_e ds=0.
     % -\int_F \mathbf{curl}_F~ v_F\, dA \\ &= \sum\limits_{e \subset \partial F} \int_e  \bcurl v \cdot \bT_e ds -\int_F \mathbf{curl}_F~ v_F\, dA 
     % \\&=\int_F \mathbf{curl}_F~ v_F\, dA -\int_F \mathbf{curl}_F~ v_F\, dA=0.
 \end{split}
 \end{equation*}
 Since the difference between ${\pol}_{r-2}(F^{ct})$ and  $\mathring{\pol}_{r-2}(F^{ct})$ is the space of the constant functions on $F$, 
we have
 \[
 \int_F (\bm \rho\cdot \bn_F)\kappa\, dA =0\qquad \forall \kappa\in {\pol}_{r-2}(F^{ct}),
 \]
 and so $\bm \rho$ vanishes on all the DOFs \eqref{equ:dofq}.  We thus conclude $\bm \rho \equiv 0$ by Lemma \ref{lem:LLDOFs}, and therefore \eqref{eqn:PiLCommute} is satisfied.

We now prove \eqref{boundL}. For any $T \in \calT_h$, with $\hat T$ defined in \eqref{eqn:ScaledT} and $\hat \btau (\hat x) = \btau(h_T \hat x)$, $\forall \hat x \in \hat T$, it is easy to check that $\widehat{\bPi_L \btau} = \hat \bPi_L \hat \btau$ by Definition \ref{def:PiL} and Lemma \ref{lem:LLDOFs}. With Lemma \ref{lem: liftL} and Proposition \ref{prop:dialT}, for any $T \in \calT_h$, we have:
\begin{equation} \label{equ:PiLest}
\begin{split}
   \|\bPi_L \btau\|^2_{L^2(T)} & = h_T^{3} \|\hat{\bPi}_L \hat{\btau}\|^2_{L^2(\hat{T})} \leq Ch_T^{3}(\|\widehat \bcurl \hat \btau \|^2_{L^2(\hat{T})}+\|\hat \btau\|^2_{H^{1/2+\delta}(\hat{T})}) \\
   & = Ch_T^{3}(\|\widehat \bcurl \hat \btau \|^2_{L^2(\hat{T})}+\|\hat \btau\|^2_{L^2(\hat{T})}+ | \hat \btau|^2_{H^{1/2+\delta}(\hat T)})\\
   & \leq C(h_T^2\|\bcurl \btau \|^2_{L^2(T)} + \| \btau \|^2_{L^2(T)}+h_T^{1+2\delta}| \btau |^2_{H^{1/2+\delta}(T)}),
\end{split}
\end{equation}
where the constant $C$ is independent of $h_T$. 
Then summing up all the tetrahedra gives the bound \eqref{boundL}.

Finally, we will show that if $\btau \in  \bH^D(\bcurl,\Omega) \cap \bH^{1/2+\delta}(\Omega) \cap \bH_0(\mathbf{curl},\Omega)$, then $ \bPi_{L} \btau \in \LL_{r-1}^t$.  Since $\bPi_{L} \btau \in \LL_{r-1}$, we only need to show $\bPi_{L} \btau \times \bn = 0$ on $\partial \Omega$. Note that  $\btau \times \bn_F=0$ and $\bcurl \btau \cdot \bn_F =0$ on $F$ for all 
$F\in \Delta_2(\mathcal{T}_h)$ with $F \subset \p \Omega$. Let $F$ be a boundary face and
let $e \in \Delta_1(F)$. Then for all $\kappa \in \mathcal{P}_{r-3}(e)$, recalling $E\kappa$ in Lemma \ref{lem:lift}, we use integration by parts to get
\begin{equation*}
     \begin{aligned}
 %         & (\bPi_{L} \btau)(a)=0 ~~~~~~&&\forall a \in \Delta_0(T)\\
          & \int_e (\bPi_{L} \btau \cdot \bt_e) \kappa \, ds= \int_e  (\btau \cdot \bt_e) \kappa ds = \int_F(\bcurl \btau)\cdot \bn_F \, E\kappa\, dA+\int_F(\btau \times \bn_F)\cdot \mathbf{grad}~E\kappa\, dA = 0.%\\
     \end{aligned}
 \end{equation*}
 Therefore, because $\bPi_L \btau$ vanishes
at the vertices of $F$, 
$(\bPi_{L} \btau \cdot \bt_e)|_e = 0$ and hence $\mathbf{curl}_F~ (\bPi_{L}\btau)_F \in \mathring{\pol}_{r-2}(F^{ct})$. By \eqref{PiL4} we get $\mathbf{curl}_F~ (\bPi_{L}\btau)_F=0$. Using  the exactness on Clough-Tocher splits \cite[(3.3e)]{Ex2020}, we know $(\bPi_{L}\btau)_F \in \mathbf{grad}_F~\mathring{S}^0_r(F^{ct})$ which after applying
\eqref{PiL9} shows $(\bPi_L \btau)_F =0$ on $F$. 
 Since $F \subset \partial \Omega$ was arbitrary, we conclude that $\bPi_{L} \btau \times \bn =0$ on $\partial \Omega$ and thus $\bPi_{L} \btau \in \LL_{r-1}^t$.  
\end{proof}

%%%%%%%%%%%%%%%%%%%%%%
%%%%%%%%%%%%%%%%%%%%%%
%%%%%%%%%%%%%%%%%%%%%%
We finish with the proof  of Theorem \ref{thm:PiV}.
\begin{proof}[Proof of Theorem \ref{thm:PiV}.]

The commuting property \eqref{eqn:PiVCommute} easily follows from \eqref{eqn:PiLCommute}. Indeed,
 \begin{align*}
 {\bf curl}\,\bPi_{V}\btau
 &={\bf curl}(\bI_h^{curl} \btau)+{\bf curl}(\bPi_{L}(\btau- \bI_h^{curl}\btau))\\
 &={\bf curl}(\bI_h^{curl} \btau)+{\bf curl}(\btau- \bI_h^{curl}\btau)\\
 &=\mathbf{curl}\,\btau.
 \end{align*}
Since  $\bI_h^{curl} \btau \in \LL_{r-1}^t$ if $\btau \in  \bH^D(\bcurl,\Omega) \cap \bH^{1/2+\delta}(\Omega) \cap \bH_0(\mathbf{curl},\Omega)$, with Lemma \ref{lem:Lest}, we have $\bPi_V \btau \in \LL_{r-1}^t$.

To prove \eqref{boundV} we use Lemma \ref{lem: SZ} and Lemma \ref{lem:Lest} to obtain
\begin{equation}
    \begin{split}
         \|\bPi_{V} \btau - \btau\|_{L^2(\Omega)} & \le \|\bI_h^{curl} \btau-\btau\|_{L^2(\Omega)} + \|\bPi_L(\bI_h^{curl} \btau-\btau)\|_{L^2(\Omega)} \\
    & \le C\big(\|\bI_h^{curl} \btau-\btau\|_{L^2(\Omega)}+ h^{1/2+\delta} |\bI_h^{curl} \btau-\btau|_{H^{1/2+\delta}(\Omega)} \\
    & \quad \quad \quad + h \|{\bf curl} (\bI_h^{curl} \btau) \|_{L^2(\Omega)}+ h \|{\bf curl} \btau \|_{L^2(\Omega)}\big) \\
    & \le C\big(h^{1/2+\delta} \|\btau\|_{H^{1/2+\delta}(\Omega)} + h \| \nabla \bI_h^{curl} \btau \|_{L^2(\Omega)}+ h \|{\bf curl} \btau \|_{L^2(\Omega)}\big) \\
    & \le C\big(h^{1/2+\delta} \|\btau\|_{H^{1/2+\delta}(\Omega)} + h \|{\bf curl} \btau \|_{L^2(\Omega)}\big),
    \end{split}
\end{equation}
where we used the inverse inequality \eqref{inv1}.

Finally, we will show that if $\btau \in  \bH^D(\bcurl,\Omega) \cap \bH^{1/2+\delta}(\Omega) \cap \bH_0(\mathbf{curl},\Omega)$,
then $ \bPi_{V} \btau \in \LL_{r-1}^t$. Similar to the proof for 
$\bPi_L$, we only need to check the boundary conditions. Since 
$\btau \times \bn=0$ on $\partial \Omega$, by Lemma \ref{lem: SZ}, 
$\bI_h^{curl} \btau \times \bn = 0$ on $\partial \Omega$ and by 
Lemma \ref{lem:Lest}, $\bPi_{L} \btau \times \bn =0$ on $\partial 
\Omega$. Therefore, $\bPi_{L} \bI_h^{curl} \btau \times \bn =0$ on 
$\partial \Omega$ and thus, 
\[
\bPi_V \btau \times \bn = \bI_h^{curl} \btau \times \bn+\bPi_{L} 
\btau \times \bn - \bPi_{L} \bI_h^{curl} \btau \times \bn = 0\quad 
\text{on }\p\Omega.
\]
This gives $\bPi_{V} \btau \in \LL_{r-1}^t$.
\end{proof}

\section{Application to the Maxwell eigenvalue problem}\label{apply-Max}
In this section, we apply the convergence theory established
in Section \ref{subsec-conv} and the properties of the two Fortin-like projections to show that quadratic (or higher) Lagrange finite element on Worsey-Farin meshes lead to convergent approximations of the Maxwell eigenvalue problem \eqref{equ:Ceig}.  
First, we require the following proposition.
\begin{prop} \label{prop:exact}
Recall the domain $\Ome$ is contractible. Then the complex \eqref{equ:exact} and \eqref{equ:bspace} are exact sequences. 
In particular,
\[
\bcurl \LL^t_{r-1} = \mathbf{\Psi}_{r-2}:=\mathbf{ker}(\NN^n_{r-2}, \Div).
\]
\end{prop}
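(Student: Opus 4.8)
The plan is to regard the two sequences as finite-dimensional cochain complexes -- their complex property is already built into the definitions of the spaces (cf.\ \cite{Ex2020}) -- and to upgrade ``complex'' to ``exact'' one node at a time, transferring the exactness of the \emph{continuous} de~Rham complexes on the contractible domain $\Omega$ down to the discrete level by means of the commuting Fortin-like projections constructed in Section~\ref{sec-Fortin}. At each node the regularity needed to apply those projections will be supplied by the embedding Proposition~\ref{emb}.

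The heart of the matter is the stated identity, i.e.\ exactness of \eqref{equ:bspace} at the node $\NN^n_{r-2}$. The inclusion $\bcurl\,\LL^t_{r-1}\subseteq \mathbf{ker}(\NN^n_{r-2},\Div)$ is immediate from $\Div\,\bcurl=0$ together with $\bcurl\,\LL^t_{r-1}\subset \NN^n_{r-2}$. For the reverse inclusion I would take $\bp\in \NN^n_{r-2}$ with $\Div\bp=0$, so $\bp\in \bH_0({\rm div}^0,\Omega)$, and use $\bH_0({\rm div}^0,\Omega)=\bcurl\,\bH_0(\bcurl,\Omega)$ to write $\bp=\bcurl\,\bw$ with $\bw\in \bH_0(\bcurl,\Omega)$. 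I would then correct $\bw$ by the gradient of the $H^1_0$-solution $\phi$ of $\Delta\phi=-\Div\bw$: since $\mathbf{grad}\,\phi\in \bH_0(\bcurl,\Omega)$, the field $\tilde\bw:=\bw+\mathbf{grad}\,\phi$ satisfies $\bcurl\,\tilde\bw=\bp$, $\Div\tilde\bw=0$, and $\tilde\bw\in \bH_0(\bcurl,\Omega)$. Thus $\tilde\bw\in \VV^t$ is divergence free with $\bcurl\,\tilde\bw\in \bL^2(\Omega)$, so Proposition~\ref{emb} gives $\tilde\bw\in \bH^{1/2+\delta}(\Omega)$; moreover $\bcurl\,\tilde\bw=\bp\in \NN_{r-2}$ shows $\tilde\bw\in \bH^D(\bcurl,\Omega)$. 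Now $\tilde\bw$ lies in the domain of $\bPi_V$ and obeys the tangential boundary condition, so Theorem~\ref{thm:PiV} yields $\bPi_V\tilde\bw\in \LL^t_{r-1}$ with $\bcurl\,\bPi_V\tilde\bw=\bcurl\,\tilde\bw=\bp$. Hence $\bp\in \bcurl\,\LL^t_{r-1}$, which proves the identity.

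The remaining nodes follow the same template. For surjectivity of $\Div$ onto $\WW^0_{r-3}$ I would take $w\in \WW^0_{r-3}$, use that $\int_\Omega w=0$ to solve the Neumann problem $\Delta\psi=w$ in $\Omega$ with $\partial_n\psi=0$ on $\partial\Omega$, and set $\bp=\mathbf{grad}\,\psi$, so that $\bp\in \VV^n$ with $\bcurl\,\bp=0$ and $\Div\bp=w$; Proposition~\ref{emb} then gives $\bp\in \bH^{1/2+\delta}(\Omega)$, and the commuting relation of Theorem~\ref{thm:Nest} yields $\Div\,\bPi_N\bp=\Pi_W\Div\bp=\Pi_W w=w$ (using that $\Pi_W$ restricts to the identity on $\WW_{r-3}$), whence $w\in \Div\,\NN^n_{r-2}$. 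Exactness at the first two nodes I would argue directly: $\mathbf{grad}$ is injective on $\mathfrak{S}^0_r$ and has kernel exactly $\mathbb{R}$ on $\mathfrak{S}_r$ since $\Omega$ is connected; and if $\bv\in \LL^t_{r-1}$ has $\bcurl\,\bv=0$, then contractibility furnishes $q$ with $\mathbf{grad}\,q=\bv$, where $q$ is automatically $C^1(\Omega)$ with $q|_T\in \pol_r(T^{wf})\cap H^2(T)=S_r(T^{wf})$ because $\bv$ is a continuous piecewise polynomial of degree $r-1$, while $\bv\times\bn=0$ forces $q$ to be constant on $\partial\Omega$, hence $q\in \mathfrak{S}^0_r$. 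The identical arguments, dropping the boundary conditions and using $\VV^n$ and the embedding of the no-boundary spaces where appropriate, give the exactness of \eqref{equ:exact}.

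I expect the main obstacle to be the middle node treated above, as it is the only place where all three ingredients must be combined simultaneously: the Hodge-type correction producing a divergence-free, boundary-conforming continuous potential, the embedding Proposition~\ref{emb} to gain the $\bH^{1/2+\delta}$ regularity that $\bPi_V$ demands on its domain, and the joint commuting and boundary-preservation properties of $\bPi_V$ from Theorem~\ref{thm:PiV}. The regularity claim at the gradient node ($q\in S_r(T^{wf})$) is comparatively minor and is settled by the local structure of $S_r(T^{wf})$; one must, however, keep careful track of the mean-value constraint that distinguishes $\WW^0_{r-3}$ from $\WW_{r-3}$ at the $L^2$ end of the complex.
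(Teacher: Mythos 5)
Your proof is correct and follows essentially the route the paper intends: the paper's (omitted) argument likewise combines a sufficiently regular continuous potential with the commuting projections $\bPi_L$/$\bPi_V$ and $\bPi_N$. The only cosmetic difference is at the curl node, where the paper invokes the Bogovskii-type result (Lemma \ref{ss}), which hands you a divergence-free potential in $\bH^1_0(\Omega)$ directly, so your Hodge correction and the appeal to Proposition \ref{emb} at that node can be replaced by a single citation.
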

This result follows from the Bogovskii %and Poincare 
operator %by Costabel and McIntosh 
in \cite[Theorem 4.9]{ON2010} and the projections $\bPi_L$, $\bPi_N$. We omit the details.

In Assumption \ref{VQ}, set $\VV_h = \LL^t_{r-1}$, $\QQ_h = \mathbf{\Psi}_{r-2}$ and $$\VV^t(\mathbf{\Psi}_{r-2}):=\{\btau \in \VV^t: \bcurl \tau \in \mathbf{\Psi}_{r-2}\}.$$
Then Theorems %\ref{thm:PiV}, \ref{thm:Nest}, \ref{thm:Lest} 
\ref{thm:PiV}--\ref{thm:Nest} 
 lead to the following results:
\begin{cor} \label{cor:assp}
The projection $\bPi_{V}:\VV^t(\mathbf{\Psi}_{r-2}) \to \LL_{r-1}^t $ satisfies
\begin{alignat*}{2}
    \bcurl\bPi_{V}\btau &= \bcurl\btau ~~~~~~~ &&\forall\btau \in \VV^t(\mathbf{\Psi}_{r-2}),\\
    \|\bPi_{V} \btau - \btau\|_{L^2(\Omega)}&\le C\big(h^{1/2+\delta} \|\btau\|_{H^{1/2+\delta}(\Omega)} + h \|{\bf curl} \btau \|_{L^2(\Omega)}\big)~~~~~~~ &&\forall\btau \in \VV^t(\mathbf{\Psi}_{r-2}).
\end{alignat*}
Furthermore,  the $L^2$-orthogonal projection $\bbP_{\Psi}:\bL^2(\Omega) \rightarrow \mathbf{\Psi}_{r-2}$ satisfies
 \begin{equation*}
     \|\bbP_{\Psi} \bp-\bp\|_{L^2(\Omega)} \leq Ch^{1/2+\delta}\|\bcurl \bp\|_{L^2(\Omega)}~~~~~\forall \bp \in \bH(\mathbf{curl}, \Omega) \cap \bH_0({\rm div}^0, \Omega).
\end{equation*}
%Here, 
%$\lim_{h\rightarrow0^+} Ch^{1/2+\delta}=0$ and $\lim_{h\rightarrow0^+} Ch=0$ for $0<\delta \le 1/2$.
\end{cor}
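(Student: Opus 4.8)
The plan is to verify the three conditions of Corollary \ref{cor:assp} by reading off Theorems \ref{thm:PiV}--\ref{thm:Nest} once the correct identifications $\VV_h = \LL^t_{r-1}$ and $\QQ_h = \mathbf{\Psi}_{r-2}$ are in place. The first two bullets are nearly immediate: since $\mathbf{\Psi}_{r-2} \subset \NN_{r-2}$ consists of divergence-free fields, any $\btau \in \VV^t(\mathbf{\Psi}_{r-2})$ satisfies $\bcurl\btau \in \NN_{r-2}$, so $\btau$ lies in the domain $\bH^D(\bcurl,\Omega) \cap \bH^{1/2+\delta}(\Omega)$ of $\bPi_V$ --- here I would invoke Proposition \ref{emb} to confirm $\btau \in \VV^t$ actually gives $\btau \in \bH^{1/2+\delta}(\Omega)$, so the domains match. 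The commuting identity and the $L^2$ bound are then exactly \eqref{eqn:PiVCommute} and \eqref{boundV}; moreover, since $\btau \in \VV^t \subset \bH_0(\bcurl,\Omega)$, the final clause of Theorem \ref{thm:PiV} guarantees $\bPi_V\btau \in \LL_{r-1}^t$, so the projection maps into the asserted target space.

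The third statement, the bound on the $L^2$-orthogonal projection $\bbP_\Psi$, is where the real work lies, and here I would exploit $\bPi_N$ together with the exactness supplied by Proposition \ref{prop:exact}. For $\bp \in \bH(\bcurl,\Omega) \cap \bH_0({\rm div}^0,\Omega)$, I would use the standard best-approximation property of the orthogonal projection,
\begin{equation*}
\|\bbP_\Psi \bp - \bp\|_{L^2(\Omega)} = \inf_{\bq_h \in \mathbf{\Psi}_{r-2}} \|\bq_h - \bp\|_{L^2(\Omega)} \le \|\bPi_N \bp - \bp\|_{L^2(\Omega)},
\end{equation*}
provided $\bPi_N \bp$ genuinely lands in $\mathbf{\Psi}_{r-2} = \mathbf{ker}(\NN^n_{r-2}, \Div)$. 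To see this I would check that $\bp$ lies in the domain of $\bPi_N$: since $\bp \in \bH_0({\rm div}^0,\Omega) \subset \bH(\Div,\Omega)$ and, by Proposition \ref{emb}, $\bp \in \bH^{1/2+\delta}(\Omega)$, it does. The commuting relation \eqref{eqn:PiNCommute} then gives $\Div \bPi_N \bp = \Pi_W \Div \bp = \Pi_W(0) = 0$, so $\bPi_N \bp$ is divergence-free; the boundary clause of Theorem \ref{thm:Nest} gives $\bPi_N \bp \in \NN_{r-2}^n$ because $\bp \in \bH_0({\rm div},\Omega)$. Hence $\bPi_N \bp \in \mathbf{\Psi}_{r-2}$, justifying the displayed inequality.

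It remains to bound $\|\bPi_N \bp - \bp\|_{L^2(\Omega)}$, and this is the crux. Applying \eqref{boundN} with $\Div \bp = 0$ collapses the estimate to
\begin{equation*}
\|\bPi_N \bp - \bp\|_{L^2(\Omega)} \le C h^{1/2+\delta} \|\bp\|_{H^{1/2+\delta}(\Omega)}.
\end{equation*}
The final step is to replace the full $H^{1/2+\delta}$ norm on the right by $\|\bcurl \bp\|_{L^2(\Omega)}$. I would do this with the embedding Proposition \ref{emb} applied to $\bp \in \VV^n = \bH(\bcurl,\Omega) \cap \bH_0({\rm div},\Omega)$, giving $\|\bp\|_{H^{1/2+\delta}(\Omega)} \le C(\|\bp\|_{L^2} + \|\bcurl\bp\|_{L^2} + \|\Div\bp\|_{L^2})$, and then using Lemma \ref{lemmaemb} together with $\Div\bp = 0$ to absorb the $L^2$ term, yielding $\|\bp\|_{H^{1/2+\delta}(\Omega)} \le C\|\bcurl\bp\|_{L^2(\Omega)}$. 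Combining the three displays closes the proof.

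The step I expect to be the main obstacle is the verification that $\bp$ and the test fields $\btau$ actually satisfy the regularity hypotheses ($\bH^{1/2+\delta}$) required by the domains of $\bPi_V$ and $\bPi_N$; the entire argument hinges on the Lipschitz-polyhedron embedding of Proposition \ref{emb} being applicable to $\VV^t$ and $\VV^n$, and on tracking that divergence-free, boundary-conforming fields remain in those spaces so that both $\bPi_N \bp \in \mathbf{\Psi}_{r-2}$ and the norm $\|\bp\|_{H^{1/2+\delta}}$ can be controlled purely by $\|\bcurl\bp\|_{L^2}$.
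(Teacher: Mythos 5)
Your proposal is correct and follows essentially the same route as the paper: identify $\VV^t(\mathbf{\Psi}_{r-2})\subset \bH^D(\bcurl,\Omega)\cap \bH^{1/2+\delta}(\Omega)$ via Proposition \ref{emb} and read off Theorem \ref{thm:PiV}, then show $\bPi_N\bp\in\mathbf{\Psi}_{r-2}$ via the commuting property \eqref{eqn:PiNCommute}, use best approximation of $\bbP_\Psi$ against $\bPi_N\bp$, and close with \eqref{boundN}, Proposition \ref{emb}, and Lemma \ref{lemmaemb}. If anything, you are slightly more explicit than the paper in verifying the boundary clause $\bPi_N\bp\in\NN_{r-2}^n$, which is needed for membership in $\mathbf{\Psi}_{r-2}=\mathbf{ker}(\NN^n_{r-2},\Div)$.
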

 \begin{proof}
 Let $\delta\in (0,\frac12]$ be the constant in the embedding
 result of Proposition \ref{emb}.
 % for $\VV^t \cup \VV^n$ from the embedding results, Proposition \ref{emb}. % and Lemma \ref{lemmaemb}. 
 The results for the operator $\bPi_{V}$ is then a direct consequence of Theorem \ref{thm:PiV} since $\VV^t(\mathbf{\Psi}_{r-2}) \subset \bH^D(\bcurl,\Omega) \cap \bH^{1/2+\delta}(\Omega)$. %by embedding results: Proposition \ref{emb} and Lemma \ref{lemmaemb}. 
 
 Next, we prove the estimate for $\bbP_{\Psi}$. Since $\bH(\mathbf{curl}, \Omega) \cap \bH_0({\rm div}^0, \Omega) \subset \bH^{1/2+\delta}(\Omega) \cap \bH_0(\Div,\Omega)$ %by embedding results: Proposition \ref{emb} and Lemma \ref{lemmaemb},
 , we use Theorem \ref{thm:Nest} to obtain $\Div \bPi_N \bp = \Pi_W \Div \bp = 0$. Thus, $\bPi_N \bp \in \mathbf{\Psi}_{r-2}$. Consequently, we have the estimate for $\bbP_{\Psi}$:
 \begin{equation*}
 \begin{split}
     \|\bbP_{\Psi} \bp-\bp\|_{L^2(\Omega)} & \leq  \|\bPi_{N}\bp-\bp\|_{L^2(\Omega)} \leq Ch^{1/2+\delta}\|\bp\|_{H^{1/2+\delta}(\Omega)} \\
     & \leq Ch^{1/2+\delta} (\|\bcurl\bp \|_{L^2(\Omega)}+\|\Div\bp \|_{L^2(\Omega)})=Ch^{1/2+\delta}\|\bcurl\bp\|_{L^2(\Omega)},
 \end{split}
 \end{equation*}
 where we used Proposition \ref{emb} and Lemma \ref{lemmaemb}.
 \end{proof}

Corollary \ref{cor:assp} tells us $(\LL^t_{r-1},\mathbf{\Psi}_{r-2})$ satisfy Assumption \ref{VQ} and now with Theorem \ref{thm:Th}, we have the final result. 
\begin{cor}
Let $\VV_h = \LL^t_{r-1}$ and  $\QQ_h = \mathbf{\Psi}_{r-2}$. Consider the problem \eqref{equ:CMod} with the nonzero eigenvalues  $0<\lambda^{(1)} \leq \lambda^{(2)} \leq ...$ and the problem (\ref{equ:DMod}) 
with the nonzero eigenvalues $0<\lambda_h^{(1)} \leq \lambda_h^{(2)} \leq ...$. Then, for any fixed $i$, $\lim_{h \rightarrow 0} \lambda_h^{(i)} = \lambda^{(i)}$.
\end{cor}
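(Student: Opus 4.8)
The plan is to assemble the abstract convergence machinery developed earlier in the paper with the concrete verification of Assumption \ref{VQ} carried out in Corollary \ref{cor:assp}. The entire argument is a chain of implications: exactness of the discrete complex fixes the correct space pairing, Corollary \ref{cor:assp} supplies the two approximation estimates required by Assumption \ref{VQ}, Theorem \ref{thm:Th} converts these into operator-norm convergence $\|\T-\T_h\|\to 0$, and Proposition \ref{prop1} upgrades operator convergence to eigenvalue convergence.

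First I would confirm that the pair $(\VV_h,\QQ_h)=(\LL^t_{r-1},\mathbf{\Psi}_{r-2})$ is a legitimate instance of the discrete mixed formulation \eqref{equ:DMod}, which demands $\QQ_h=\bcurl\VV_h$. This is precisely the content of Proposition \ref{prop:exact}, which gives $\bcurl\LL^t_{r-1}=\mathbf{\Psi}_{r-2}=\mathbf{ker}(\NN^n_{r-2},\Div)$; consequently the discrete eigenvalues $\lambda_h^{(i)}$ of \eqref{equ:DMod} are well defined for this choice and the discrete source operators $\T_h$ of \eqref{DSour} map into $\QQ_h=\mathbf{\Psi}_{r-2}$.

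Next I would invoke Corollary \ref{cor:assp}, which shows that $(\LL^t_{r-1},\mathbf{\Psi}_{r-2})$ satisfies Assumption \ref{VQ} with $\omega_0(h)=\omega_1(h)=Ch^{1/2+\delta}$, where $\delta\in(0,\tfrac12]$ is the embedding exponent of Proposition \ref{emb}. Since $\delta>0$, both $\omega_i(h)\to 0$ as $h\to 0^+$, so the hypotheses of Theorem \ref{thm:Th} are met. Applying Theorem \ref{thm:Th} then yields
\[
\|\T-\T_h\|\le C\big(\omega_0(h)+\omega_1(h)\big)\le Ch^{1/2+\delta}\xrightarrow{h\to 0} 0.
\]
With $\|\T-\T_h\|\to 0$ in hand, Proposition \ref{prop1} immediately gives $\lim_{h\to 0}\lambda_h^{(i)}=\lambda^{(i)}$ for every fixed $i$, which is the assertion.

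Because all the analytic difficulty has already been absorbed into the construction and estimates of the two Fortin-like projections (Theorems \ref{thm:PiV}--\ref{thm:Nest}) and into the exactness result (Proposition \ref{prop:exact}), the only task remaining here is bookkeeping: checking that the concrete spaces slot correctly into the abstract framework and that the rates $\omega_i$ vanish. The single point deserving care --- and what I regard as the main, if modest, obstacle --- is ensuring that the pairing $\QQ_h=\bcurl\VV_h$ required by \eqref{equ:DMod} genuinely coincides with the divergence-free kernel $\mathbf{\Psi}_{r-2}$ used in Corollary \ref{cor:assp}. This identification is exactly where the exactness of the boundary-conditioned complex \eqref{equ:bspace}, and hence the contractibility of $\Omega$, enters.
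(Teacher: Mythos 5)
Your proposal is correct and follows essentially the same route as the paper: verify Assumption \ref{VQ} via Corollary \ref{cor:assp}, apply Theorem \ref{thm:Th} to get $\|\T-\T_h\|\to 0$, and conclude with Proposition \ref{prop1}. Your explicit remark that Proposition \ref{prop:exact} is what identifies $\bcurl\LL^t_{r-1}$ with $\mathbf{\Psi}_{r-2}$ (so that the pair genuinely instantiates \eqref{equ:DMod}) is a point the paper leaves implicit, and it is a worthwhile clarification.
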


%%%%%%%%%%%%%%%%%%%%%%%%%%%%%%%%%%%%%%%%%%%%%%
\section{Numerical Experiments}\label{sec-numerics}
In this section we provide numerical experiments which support our theoretical work.  All the computations were carried out using FEniCS \cite{alnaes2015fenics}. We consider the domain $\Omega = (0,\pi)^3$, \MJN{so that the exact eigenvectors of problem \eqref{equ:Ceig} (with non-zero eigenvalues)} are of the following form:
 \begin{equation}
         \bu= \begin{pmatrix}
           a_1 \cos(k_1x)\sin(k_2y)\sin(k_3z) \\
           a_2 \sin(k_1x)\cos(k_2y)\sin(k_3z) \\
           a_3 \sin(k_1x)\sin(k_2y)\cos(k_3z)
          \end{pmatrix},\quad \MJN{k_j\in \mathbb{N}\cup\{0\},\ k_1+k_2+k_3\ge 2,}
 \end{equation}
 where the coefficient $a_1, a_2,a_3$ are determined by the divergence-free equation: $a_1 k_1 +a_2k+a_3k_3=0$ and the non-zero eigenvalues are of the form $\MJN{\eta^2 = \lambda} = k_1^2+k_2^2+\MJN{k_3^2}$. We can compute the first few eigenvalues explicitly: 2 (with multiplicity 3), 3 (with multiplicity 2), 5 (with multiplicity 6), 6 (with multiplicity 6). 
 
 We \MJN{compute problem \eqref{equ:Deig}} using 
 three different choices of $\VV_h$: (i) linear and quadratic Lagrange finite element on mesh without Worsey-Farin refinement, 
 %with unstructured meshes, 
(ii) linear Lagrange finite element space on Worsey-Farin meshes and (iii) quadratic Lagrange finite element space on Worsey-Farin meshes.
 % and N\'ed\'elec finite element space with unstructured meshes. 

Table \ref{PG} states the first $13$ computed eigenvalues 
using linear and quadratic elements on a mesh without Worsey-Farin refinement
with $h=\pi/8$.  The numerics clearly indicate
that the discrete eigenvalues are poor approximations with $O(1)$ errors.
Likewise, numerical experiments using the linear Lagrange finite element space 
on Worsey-Farin meshes lead to computed eigenvalues
that are far from the exact solution (cf.~Table \ref{P1WF}).

Finally, we report the computed eigenvalues 
of method \eqref{equ:Deig} using quadratic Lagrange elements
on Worsey-Farin meshes and report the first $13$ non-zero eigenvalues with $h=1/6$ in 
Table \ref{P2WF}.  As expected from the theoretical results,
this scenario leads to accurate approximate eigenvalues.
In addition the right column in Table \ref{P2WF} lists
the errors of the first computed (non-zero) eigenvalue and indicates
converges with at least cubic rate.

% %In this is example, which is the one we analyzed, we see that the method seems to be converging as expected; see (\textbf{Table~\ref{P2WF}}). 
% The left table provides the result of first 13 non-zero eigenvalues with $h=\pi/6$. The right table provides the error and the rate of the first non-zero eigenvalue with the mesh size from $\pi/5$ to $\pi/9$ with computed rates.  

%  \textit{1. $P_1$ and $P_2$ Lagrange finite element space on a mesh without Worsey-Farin refinement}
% In this case, both $P_1$ and $P_2$ Lagrange finite element space can not provide satisfying results (\textbf{Table~\ref{PG}}) as expected.  
 
 \begin{table}[htbp]
 \centering
 \begin{tabular}{ccc}
 \hline
 $i$ & $\lambda_h$ & $|\lambda-\lambda_h|$ \\ \hline
 1 & 2.0610  & 6.10 $\times 10^{-2}$ \\
 2 & 2.0610  & 6.10 $\times 10^{-2}$ \\
 3 & 2.0774  & 7.74 $\times 10^{-2}$ \\
 4 & 2.0900  & 9.10 $\times 10^{-1}$ \\
 5 & 2.0900  & 9.10 $\times 10^{-1}$ \\
 6 & 2.1506  & 2.85  \\
 7 & 2.1506  & 2.85 \\
 8 & 2.2698  & 2.73 \\
 9 & 2.2698  & 2.73 \\
 10 & 2.2910  & 2.71 \\
 11 & 2.3304  & 2.67 \\
 12 & 2.3514  & 3.65 \\
 13 & 2.3514  & 3.65 \\\hline
 \end{tabular}
 \quad
 \begin{tabular}{ccc}
 \hline
 $i$ & $\lambda_h$ & $|\lambda-\lambda_h|$ \\ \hline
 1 & 2.6699  & 6.67 $\times 10^{-1}$ \\
 2 & 2.6766  & 6.77 $\times 10^{-1}$ \\
 3 & 2.7369  & 7.34 $\times 10^{-1}$ \\
 4 & 2.7510  & 2.49 $\times 10^{-1}$ \\
 5 & 2.7510  & 2.49 $\times 10^{-1}$ \\
 6 & 2.7615  & 2.24 \\
 7 & 2.7615  & 2.24 \\
 8 & 2.7782  & 2.22 \\
 9 & 2.7782  & 2.22  \\
 10 & 2.7941  & 2.21  \\
 11 & 2.8244  & 2.18  \\
 12 & 2.8244  & 3.17  \\
 13 & 2.8855  & 3.11  \\\hline
 \end{tabular}\caption{The case of linear (left) and quadratic (right) Lagrange finite element space on a mesh without Worsey-Farin refinement, $h=\pi/8$.}
 \label{PG}
 \end{table}

% \textit{2. $P_1$ Lagrange finite element space with Worsey-Farin meshes } 

%  For Worsey-Farin meshes, with $P_1$ Lagrange finite element space, the eigenvalues computed are far from the exact solutions (\textbf{}). This says that one needs at least piece-wise quadratics for convergence. 

 \begin{table}[htbp]
 \centering
 \begin{tabular}{ccc}
 \hline
 $i$ & $\lambda_h$ & $|\lambda-\lambda_h|$ \\ \hline
 1 & 2.6763  & 6.77 $\times 10^{-1}$ \\
 2 & 2.6763  & 6.77 $\times 10^{-1}$ \\
 3 & 2.6775  & 6.78 $\times 10^{-1}$ \\
 4 & 2.6775  & 3.23 $\times 10^{-1}$ \\
 5 & 2.7112  & 2.89 $\times 10^{-1}$ \\
 6 & 2.7825  & 2.22 \\
 7 & 2.7825  & 2.22 \\
 8 & 2.7860  & 2.21 \\
 9 & 2.8254  & 2.17  \\
 10 & 2.8878  & 2.11  \\
 11 & 2.9440  & 2.06  \\
 12 & 2.9440  & 3.56  \\
 13 & 2.9881  & 3.01  \\\hline
 \end{tabular}\caption{The case of linear Lagrange finite element space on a Worsey-Farin mesh, $h=\pi/10$.}
 \label{P1WF}
 \end{table}

%\textit{3.$P_2$ Lagrange space with Worsey-Farin meshes } 

 \begin{table}[htbp]
 \centering
 \begin{tabular}{ccc}
 \hline
 $i$ & $\lambda_h$ & $|\lambda-\lambda_h|$ \\ \hline
 1 & 2.000121  & 1.21 $\times 10^{-4}$ \\
 2 & 2.000243  & 2.43 $\times 10^{-4}$ \\
 3 & 2.000243  & 2.43 $\times 10^{-4}$ \\
 4 & 3.000741  & 7.41 $\times 10^{-4}$ \\
 5 & 3.000741  & 7.41 $\times 10^{-4}$ \\
 6 & 5.001307  & 1.31 $\times 10^{-3}$ \\
 7 & 5.001307  & 1.31 $\times 10^{-3}$ \\
 8 & 5.001862  & 1.86 $\times 10^{-3}$ \\
 9 & 5.002382  & 2.38 $\times 10^{-3}$ \\
 10 & 5.002913  & 2.91 $\times 10^{-3}$ \\
 11 & 5.002913  & 2.91 $\times 10^{-3}$ \\
 12 & 6.001822  & 1.82 $\times 10^{-3}$ \\
 13 & 6.002854  & 2.85 $\times 10^{-3}$ \\\hline
 \end{tabular}
 %\end{table}
 \quad
 %\begin{table}[!htbp]
 \begin{tabular}{ccc}
 \hline
 $h$ & $|\lambda^{(1)}-\lambda_h^{(1)}|$ & Rate \\ \hline
 $\pi/5$ &  1.95 $\times 10^{-4}$ & / \\
 $\pi/6$ &  1.21 $\times 10^{-4}$ & 2.62 \\
 $\pi/7$ &  7.40 $\times 10^{-5}$ & 3.19 \\
 $\pi/8$ &  4.64 $\times 10^{-5}$ & 3.48 \\
 $\pi/9$ &  3.03 $\times 10^{-5}$ & 3.62 \\\hline
 \end{tabular} \caption{The case of $P_2$ finite element space with Worsey-Farin meshes}
 \label{P2WF}
 \end{table}

\SG{ \section{Conclusion}
In this paper, we studied and justified the convergence theory of 
the three-dimensional Maxwell eigenvalue problem using Lagrange finite element spaces
on Worsey-Farin splits. % by extending the two-dimensional problems. 
Although we only focus on Worsey-Farin splits in this paper, we provide a framework of proof which may apply to other refinements if we could fit the spaces into a de Rham complex. 
}

%  \textit{4.N\'{e}d\'{e}lec space with unstructured meshes } 

%  To be more precise, here we use the lowest order N\'{e}d\'{e}lec space and the mesh size is $\pi/10$ and provide the error and convergence rate of the first eigenvalue with the mesh size from $\pi/5$ to $\pi/20$  (\textbf{Table~\ref{ND}}). 

%  \begin{table}[htbp]
%  \centering
%  \begin{tabular}{ccc}
%  \hline
%  $i$ & $\lambda_h$ & $|\lambda-\lambda_h|$ \\ \hline
%  1 & 1.9863  & 1.37 $\times 10^{-2}$ \\
%  2 & 2.0038   & 3.8 $\times 10^{-3}$ \\
%  3 & 2.0038  & 3.8 $\times 10^{-3}$ \\
%  4 & 3.0126  & 1.26 $\times 10^{-2}$ \\
%  5 & 3.0126  & 1.26 $\times 10^{-2}$ \\
%  6 & 4.9188  & 8.12 $\times 10^{-2}$ \\
%  7 & 4.9188  & 8.12 $\times 10^{-2}$ \\
%  8 & 4.9467  & 5.33 $\times 10^{-2}$ \\
%  9 & 4.9842  & 1.58 $\times 10^{-2}$ \\
%  10 & 5.0140  & 1.40 $\times 10^{-2}$ \\
%  11 & 5.0140  & 1.40 $\times 10^{-2}$ \\
%  12 & 5.9524  & 4.76 $\times 10^{-2}$ \\
%  13 & 5.9524  & 4.76 $\times 10^{-2}$ \\\hline
%  \end{tabular}
%  %\end{table}
%  \quad
%  %\begin{table}[!htbp]
%  \begin{tabular}{ccc}
%  \hline
%  $h$ & $|\lambda^{(1)}-\lambda_h^{(1)}|$ & Rate \\ \hline
%  $\pi/5$ &  5.2 $\times 10^{-2}$ & / \\
%  $\pi/10$ &  1.37 $\times 10^{-2}$ & 1.92 \\
%  $\pi/20$ &  3.50 $\times 10^{-3}$ & 1.97 \\\hline
%  \end{tabular}\caption{The case of the lowest order N\'{e}d\'{e}lec space with unstructured meshes}
%  \label{ND}
%  \end{table}

 \bibliographystyle{siam}
 \bibliography{ref}

 \appendix

\section{Proof of Theorem \ref{thm:Th}}
Before we prove Theorem \ref{thm:Th} we will need a few lemmas. 
%%%%%%%%%%%%%%%
%%%%%%%%%%%%%%%
%%%%%%%%%%%%%%%
\begin{lemma}\label{AT}
With Assumption \ref{VQ}, there exists a positive constant $C$ such that 
\begin{equation*}
    \|\bA\bbP_Q {\bm f}-\bA {\bm f}\|_{L^2(\Omega)}+\|\bT\bbP_Q{\bm f}-\bT {\bm f}\|_{L^2(\Omega)} \leq C \omega_1(h)\|{\bm f}\|_{L^2(\Omega)}~~~\forall {\bm f} \in \bL^2(\Omega).
\end{equation*}
\end{lemma}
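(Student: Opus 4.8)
The plan is to reduce both terms to a single bound on $\|\bA{\bm g}\|_{L^2(\Omega)}$, where ${\bm g}:=\bbP_Q{\bm f}-{\bm f}$, and to exploit the one structural fact that ${\bm g}$ is $L^2$-orthogonal to $\QQ_h$. By linearity, $\bsigma:=\bA{\bm g}$ and $\bp:=\bT{\bm g}$ solve the source problem \eqref{CSour} with data ${\bm g}$. Testing the first equation with $\btau=\bsigma$ gives $\|\bsigma\|_{L^2(\Omega)}^2=-(\bp,\bcurl\bsigma)$, while testing the second equation with $\bq=\bp\in\bH_0({\rm div}^0,\Omega)$ gives $(\bcurl\bsigma,\bp)=({\bm g},\bp)$. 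Combining the two yields the key identity
\begin{equation*}
\|\bA{\bm g}\|_{L^2(\Omega)}^2=-(\bT{\bm g},{\bm g}).
\end{equation*}

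Next I would insert the projection error. Since $\bbP_Q\bp\in\QQ_h$ and $({\bm g},\bq_h)=0$ for every $\bq_h\in\QQ_h$ (by definition of the $L^2$-projection $\bbP_Q$), the identity rewrites as $\|\bsigma\|^2=-(\bp-\bbP_Q\bp,{\bm g})\le\|\bp-\bbP_Q\bp\|_{L^2(\Omega)}\|{\bm g}\|_{L^2(\Omega)}$. The crucial observation, which makes Assumption \ref{VQ} applicable, is that $\bp=\bT{\bm g}$ belongs to $\bH(\mathbf{curl},\Omega)\cap\bH_0({\rm div}^0,\Omega)$: the first equation of \eqref{CSour} reads $(\bp,\bcurl\btau)=-(\bsigma,\btau)$ for all $\btau\in\bH_0(\mathbf{curl},\Omega)$, which exhibits a weak curl $\bcurl\bp=-\bsigma\in\bL^2(\Omega)$, while $\bp\in\bH_0({\rm div}^0,\Omega)$ holds by construction. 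The second estimate of Assumption \ref{VQ} then gives $\|\bp-\bbP_Q\bp\|_{L^2(\Omega)}\le\omega_1(h)\|\bcurl\bp\|_{L^2(\Omega)}=\omega_1(h)\|\bsigma\|_{L^2(\Omega)}$. Cancelling one factor of $\|\bsigma\|_{L^2(\Omega)}$ and using $\|{\bm g}\|_{L^2(\Omega)}=\|\bbP_Q{\bm f}-{\bm f}\|_{L^2(\Omega)}\le\|{\bm f}\|_{L^2(\Omega)}$ (best approximation, with $0\in\QQ_h$) produces $\|\bA{\bm g}\|_{L^2(\Omega)}\le\omega_1(h)\|{\bm f}\|_{L^2(\Omega)}$.

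For the remaining term I would invoke the embedding Lemma \ref{lemmaemb}. Because $\bp\in\bH(\mathbf{curl},\Omega)\cap\bH_0({\rm div},\Omega)=\VV^n$ with $\Div\bp=0$, that lemma yields $\|\bT{\bm g}\|_{L^2(\Omega)}=\|\bp\|_{L^2(\Omega)}\le C\|\bcurl\bp\|_{L^2(\Omega)}=C\|\bA{\bm g}\|_{L^2(\Omega)}\le C\omega_1(h)\|{\bm f}\|_{L^2(\Omega)}$. Adding the two contributions gives the asserted estimate.

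The main obstacle I anticipate is not computational but organizational: arranging the algebra so that the \emph{only} discretization error entering the final bound is precisely the projection error $\|\bp-\bbP_Q\bp\|_{L^2(\Omega)}$ that Assumption \ref{VQ} is built to control. This hinges on two points, (i) deriving the self-testing identity $\|\bA{\bm g}\|^2=-(\bT{\bm g},{\bm g})$ from the two mixed equations, and (ii) verifying that $\bT{\bm g}$ has weak curl equal to $-\bA{\bm g}$ and vanishing divergence, so that $\bT{\bm g}$ is an admissible argument in the second hypothesis of Assumption \ref{VQ}. Once these are established, the orthogonality ${\bm g}\perp\QQ_h$ together with $\|\bcurl(\bT{\bm g})\|_{L^2(\Omega)}=\|\bA{\bm g}\|_{L^2(\Omega)}$ closes the argument by absorption.
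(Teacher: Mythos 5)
Your proof is correct and follows essentially the same route as the paper: both hinge on the energy identity $\|\bA\bm g\|_{L^2(\Omega)}^2=-(\bT\bm g,\bm g)$ obtained by testing the subtracted source problems, on applying the second estimate of Assumption \ref{VQ} to $\bT\bm g\in \bH(\mathbf{curl},\Omega)\cap\bH_0({\rm div}^0,\Omega)$ (the paper phrases this as a supremum over $\bphi$ and moves $\bbP_Q$ onto $\bphi$ by self-adjointness, which is the same computation as your insertion of $\bbP_Q\bp$ via ${\bm g}\perp\QQ_h$), and on the embedding of Lemma \ref{lemmaemb} for the $\bT$-term. No gaps.
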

\begin{proof}
Let ${\bm f} \in \bL^2(\Omega)$ and set $\bsigma=\A \f$, $\bu=\T\f$,$\bpsi=\A\bbP_Qf$ and $\bw=\T \bbP_Q \f$. With $\f$ and $\bbP_Q \f$ in (\ref{CSour}), we see that 
\begin{equation}
    \begin{split}
        (\bsigma-\bpsi,\btau)+(\bu-\bw,\bcurl\btau)=0~~~~~~~~~& \forall\btau \in \bH_0(\mathbf{curl}, \Omega), \\
        (\bcurl (\bsigma-\bpsi),\bq)=({\bm f}-\bbP_Q \f,\bq)~~~~~~ & \forall \bq \in \bH_0({\rm div}^0, \Omega).
    \end{split}
\end{equation}
Setting $\bq=\bw-\bu$ and $\btau=\bsigma-\bpsi$ in the above equations gives $\|\bsigma-\bpsi\|^2_{L^2(\Omega)}=({\bm f}-\bbP_Q \f,\bw-\bu)$, and the above equations also tell us that $\bcurl (\bu-\bw)=\bsigma-\bpsi$. Furthermore, $\bu-\bw \in \bH(\mathbf{curl}, \Omega) \cap \bH_0({\rm div}^0, \Omega)$ and there holds
\begin{equation*}
    \|\bsigma-\bpsi\|_{L^2(\Omega)} \leq \sup\limits_{\boldsymbol{\phi} \in \bH(\mathbf{curl}, \Omega) \cap \bH_0({\rm div}^0, \Omega)} \frac{(\f-\bbP_Q \f,\boldsymbol{\phi})}{\|\bcurl \boldsymbol{\phi}\|_{L^2(\Omega)}}.
\end{equation*}
Moreover, Assumption \ref{VQ} gives us
\begin{alignat*}{1}
    \sup\limits_{\boldsymbol{\phi} \in \bH(\mathbf{curl}, \Omega) \cap \bH_0({\rm div}^0, \Omega)} \frac{(\f-\bbP_Q \f,\boldsymbol{\phi})}{\|\bcurl \boldsymbol{\phi}\|_{L^2(\Omega)}} =  &\sup\limits_{\boldsymbol{\phi} \in \bH(\mathbf{curl}, \Omega) \cap \bH_0({\rm div}^0, \Omega)} \frac{(\f,\boldsymbol{\phi}-\bbP_Q \boldsymbol{\phi})}{\|\bcurl \boldsymbol{\phi}\|_{L^2(\Omega)}} \\
    \leq &\,  \omega_1(h) \|\f\|_{L^2(\Omega)}.
\end{alignat*}

Thus, we have shown 
\begin{equation*}
    \|\bA\bbP_Q {\bm f}-\boldsymbol{Af}\|_{L^2(\Omega)} \leq  \omega_1(h)\|{\bm f}\|_{L^2(\Omega)}
\end{equation*}
On the other hand, since $\bT(\bbP_Q {\bm f} -\f) \in \bH(\mathbf{curl}, \Omega) \cap \bH_0({\rm div}^0, \Omega)$, we have by Lemma \ref{emb},
\begin{equation*}
\begin{split}
    \|\bT\bbP_Q {\bm f}-\boldsymbol{Tf}\|_{L^2(\Omega)} & \leq C\|\bcurl(\bT\bbP_Q {\bm f}-\boldsymbol{Tf})\|_{L^2(\Omega)} \\ 
    &= C\|\bA\bbP_Q {\bm f}-\boldsymbol{Af}\|_{L^2(\Omega)} \leq C \omega_1(h)\|{\bm f}\|_{L^2(\Omega)}.
\end{split}
\end{equation*}
\end{proof}

%Here, we need to present an important lemma with its proof shown in . 
Since the domain $\Omega$ is contractible, then the set of harmonic forms is trivial and 
we have (cf.~\cite[Theorem 4.9]{ON2010}): 
%%%%%%%%%%%%%%%
%%%%%%%%%%%%%%%
%%%%%%%%%%%%%%%
\begin{lemma} \label{ss}
Let $\Omega$ be a bounded, contractible,  Lipschitz domain in $\mathbb{R}^3$. Then for all $\bu \in \bH_0({\rm div}^0,\Omega)$,  there exists $\bv \in \bH^{1}_0(\Omega)$ such that $\bu=\bcurl \bv$ and $\|\bv\|_{H^{1}(\Omega) }\leq C\|\bu\|_{L^2(\Omega)}$.
\end{lemma}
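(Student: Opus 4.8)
The plan is to deduce the lemma from the regularized Bogovskii operators for the de Rham complex constructed in \cite{ON2010}, after checking that the topological and compatibility hypotheses needed for that construction are in force. The organizing idea is that the statement is exactly the curl-analogue of the classical Bogovskii result for the divergence: it asserts that the divergence-free, zero-normal-trace field $\bu$ is the curl of a potential that is not merely in $\bH_0(\bcurl,\Omega)$ but enjoys full $\bH^1_0$-regularity, with a linear stability bound.

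First I would place the problem inside the three-dimensional de Rham complex with homogeneous boundary conditions,
\[
H^1_0(\Omega) \xrightarrow{\nabla} \bH_0(\bcurl,\Omega) \xrightarrow{\bcurl} \bH_0(\Div,\Omega) \xrightarrow{\Div} L^2_0(\Omega),
\]
identifying vector fields with differential forms through the usual proxies, so that $\bu\in\bH_0({\rm div}^0,\Omega)$ corresponds to a closed $2$-form with vanishing trace. Observe first that the target regularity is consistent: if $\bv\in\bH^1_0(\Omega)$ then $\bv\times\bn=0$ on $\partial\Omega$, hence $\bv\in\bH_0(\bcurl,\Omega)$ and $\bcurl\bv\in\bH_0({\rm div}^0,\Omega)$, so the curl does map $\bH^1_0(\Omega)$ into the space where $\bu$ lives.

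Next I would record the compatibility. The paper already notes $\bH_0({\rm div}^0,\Omega)=\bcurl\,\bH_0(\bcurl,\Omega)$, so a potential with $\bH_0(\bcurl)$-regularity exists; equivalently, because $\Omega$ is contractible the space of harmonic fields is trivial and the complex above is exact in the middle, which is precisely the solvability condition making $\Div\bu=0$ (together with the zero normal trace) both necessary and sufficient for $\bu$ to lie in the range of $\bcurl$. The remaining content of the lemma is therefore twofold: (i) upgrading the potential from $\bH_0(\bcurl)$ to $\bH^1_0$, and (ii) the quantitative estimate $\|\bv\|_{H^1(\Omega)}\le C\|\bu\|_{L^2(\Omega)}$. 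Both are supplied by \cite[Theorem 4.9]{ON2010}: for a domain star-shaped with respect to a ball, the Bogovskii-type homotopy operators satisfy $d\,T_\ell+T_{\ell+1}\,d=\mathrm{Id}$, are bounded $H^s\to H^{s+1}$ for every $s$ (in particular smoothing from $L^2$ into $H^1$), and, being of Bogovskii rather than Poincaré type, produce forms with vanishing trace. Applying the operator of the appropriate degree to $\bu$ then yields $\bv$ with $\bcurl\bv=\bu$, $\bv\in\bH^1_0(\Omega)$, and the stated bound; a general contractible Lipschitz $\Omega$ is reduced to the star-shaped case by a finite partition of unity subordinate to a cover by star-shaped subdomains, exactly as carried out in that reference.

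I expect the main obstacle to be the simultaneous achievement of the derivative gain and the full homogeneous trace on a merely Lipschitz boundary: securing $\bv\in\bH^1$ (not just $\bH_0(\bcurl)$) \emph{and} $\bv|_{\partial\Omega}=0$ is genuinely delicate, and a naive extension-plus-gradient-correction fails because the kernel of $\bcurl$ on a contractible domain is $\nabla H^2$, which furnishes only one scalar degree of freedom and cannot match the three scalar boundary conditions of $\bH^1_0$. This is precisely why one must build the boundary conditions into the potential from the outset through the Bogovskii kernels, and why the boundedness theory of those kernels on Lipschitz domains—and the preservation of regularity and boundary behavior under the partition-of-unity patching—is essential. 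Accordingly, in the write-up I would invoke \cite[Theorem 4.9]{ON2010} for the star-shaped building block and the patching, and limit myself to verifying the contractibility and divergence-free hypotheses that make its conclusion applicable here.
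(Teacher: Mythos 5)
Your proposal is correct and takes essentially the same route as the paper: the paper gives no standalone proof of this lemma, but derives it directly from the Bogovskii-type operators of \cite[Theorem 4.9]{ON2010} together with the observation that contractibility makes the space of harmonic fields trivial, which is exactly the reduction you carry out (with more detail on why the hypotheses of that theorem are met and why the zero normal trace of $\bu$ is needed). Your added discussion of the $H^1_0$-regularity gain and the partition-of-unity reduction to star-shaped pieces is accurate but already contained in the cited reference.
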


%%%%%%%%%%%%%%%
%%%%%%%%%%%%%%%
%%%%%%%%%%%%%%%
\begin{lemma} \label{IS}
With Assumption \ref{VQ}, there exists a positive constant $C$ such that for every $\bp_h \in \QQ_h$, there exists $\btau_h \in \VV_h$ such that $\bcurl\btau_h = \bp_h$ and $\|\btau_h\|_{L^2(\Omega)} \leq C\|\bp_h\|_{L^2(\Omega)}$.
\end{lemma}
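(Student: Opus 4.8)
The plan is to produce the required $\btau_h$ by applying the Fortin-like projection $\bPi_{\VV}$ from Assumption \ref{VQ} to a suitable $H^1$-regular vector potential of $\bp_h$, and then to bound its $L^2$-norm using the approximation estimate in that assumption together with the stability of the potential.

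First I would observe that $\bp_h \in \QQ_h = \bcurl \VV_h$ lies in $\bH_0({\rm div}^0,\Omega)$, since $\VV_h \subset \bH_0(\bcurl,\Omega)$ and $\bcurl \bH_0(\bcurl,\Omega) = \bH_0({\rm div}^0,\Omega)$. Invoking Lemma \ref{ss}, I obtain a vector potential $\bv \in \bH^1_0(\Omega)$ with $\bcurl \bv = \bp_h$ and $\|\bv\|_{H^1(\Omega)} \le C\|\bp_h\|_{L^2(\Omega)}$. The key observation is that $\bv$ belongs to the domain $\VV^t(\QQ_h)$ of $\bPi_{\VV}$: indeed $\bv \in \bH^1_0(\Omega)$ implies $\bv \times \bn = 0$ on $\partial\Omega$, so $\bv \in \bH_0(\bcurl,\Omega) \cap \bH({\rm div},\Omega) = \VV^t$, while $\bcurl \bv = \bp_h \in \QQ_h$. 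I then set $\btau_h = \bPi_{\VV}\bv \in \VV_h$; the commuting property of Assumption \ref{VQ} gives $\bcurl \btau_h = \bcurl \bv = \bp_h$ immediately.

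It remains to establish the $L^2$-bound. Writing $\|\btau_h\|_{L^2(\Omega)} \le \|\bPi_{\VV}\bv - \bv\|_{L^2(\Omega)} + \|\bv\|_{L^2(\Omega)}$ and applying the approximation estimate of Assumption \ref{VQ}, I get $\|\bPi_{\VV}\bv - \bv\|_{L^2(\Omega)} \le \omega_0(h)\big(\|\bv\|_{H^{1/2+\delta}(\Omega)} + \|\bcurl \bv\|_{L^2(\Omega)}\big)$. Bounding $\|\bv\|_{H^{1/2+\delta}(\Omega)} \le \|\bv\|_{H^1(\Omega)}$ (valid since $\delta \le \tfrac12$) and using $\bcurl \bv = \bp_h$ together with the stability of the potential, all terms reduce to a multiple of $\|\bp_h\|_{L^2(\Omega)}$, yielding $\|\btau_h\|_{L^2(\Omega)} \le C\big(1 + \omega_0(h)\big)\|\bp_h\|_{L^2(\Omega)}$.

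The main obstacle is obtaining a constant $C$ independent of $h$: the factor $\omega_0(h)$ must be controlled uniformly. Since $\lim_{h\to 0^+}\omega_0(h) = 0$, the quantity $\omega_0(h)$ is bounded over the relevant range of mesh sizes, so $1 + \omega_0(h)$ can be absorbed into a single uniform constant. Apart from this bookkeeping, the only genuinely delicate point is verifying that the regular potential $\bv$ lands in the precise space $\VV^t(\QQ_h)$ so that $\bPi_{\VV}$ is applicable; the boundary regularity $\bv\in\bH^1_0(\Omega)$ supplied by Lemma \ref{ss} is exactly what makes this work.
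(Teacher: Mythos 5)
Your proposal is correct and follows essentially the same route as the paper: invoke Lemma \ref{ss} to obtain an $\bH^1_0(\Omega)$ potential of $\bp_h$, verify it lies in $\VV^t(\QQ_h)$, apply $\bPi_{\VV}$, and use the commuting property plus the approximation estimate of Assumption \ref{VQ} (with $\|\cdot\|_{H^{1/2+\delta}}\le\|\cdot\|_{H^1}$ and boundedness of $\omega_0$) to get the uniform $L^2$-bound. Your explicit checks that $\bp_h\in\bH_0({\rm div}^0,\Omega)$ and that the potential lands in $\VV^t(\QQ_h)$ are details the paper leaves implicit, but the argument is the same.
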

\begin{proof}
By Lemma \ref{ss}, for every $\bp_h \in \QQ_h \subset \bH_0({\rm div}^0, \Omega)$, there exists $\btau \in \bH^1_0(\Omega)$ such that $\bcurl\btau = \bp_h$ and $\|\btau\|_{H^1(\Ome)}\le C \|\bp_h\|_{L^2(\Omega)}$. 
Note that since $\btau \in \VV^t(\QQ_h)$, we can set $\btau_h=\bPi_{\VV}\btau$. By  Assumption \ref{VQ}, we have $\bcurl\btau_h=\bcurl\btau=\bp_h$. Additionally, 
\begin{equation*}
    \|\btau_h\|_{L^2(\Omega)} \leq C (\|\btau\|_{H^{1/2+\delta}(\Omega)} + \|\bcurl\btau\|_{L^2(\Omega)} ) \leq C \|\btau\|_{H^1(\Omega)} \leq C \|\bp_h\|_{L^2(\Omega)}.
\end{equation*}
\end{proof}

%%%%%%%%%%%%%%%
%%%%%%%%%%%%%%%
%%%%%%%%%%%%%%%
We are now in position to prove Theorem \ref{thm:Th}.
\begin{proof}[Proof of Theorem \ref{thm:Th}.]
Let $\f \in \bL^2(\Omega)$ and set $\bsigma=\A \f$, $\bu=\T\f$,$\bsigma_h=\A_h \f$, $\bu_h=\T_h\f$, $\bpsi=\A\bbP_Qf$ and $\bw=\T \bbP_Q \f$. The first step is to estimate $\bpsi-\bsigma_h$. We note that $\bcurl \bpsi= \bbP_Q \f$ and $\bcurl \bw= \bpsi$.  From this we have that $\bpsi \in \VV^t(\QQ_h)$, $\Div \bpsi=0$ and $\bw \in   \bH(\bcurl, \Omega) \cap \bH_0({\rm div}^0, \Omega)$. Hence, by Assumption 1 we get $ \bcurl (\bPi_{\VV} \bpsi)=\bcurl \bpsi= \bbP_Q \f$. We can write the error equations
\begin{subequations}
\begin{alignat}{2}
    (\bPi_{\VV} \bpsi-\bsigma_h,\btau_h)+(\bbP_Q\bw- \bu_h,\bcurl\btau_h) & =(\bPi_{\VV} \bpsi-\bpsi,\btau_h) \quad \quad && \forall\btau_h \in \VV_h, \label{eq712a}\\
     (\bcurl (\bPi_{\VV} \bpsi-\bsigma_h),\bq_h)& =0 && \forall \bq_h \in \QQ_h. \label{eq712b}
\end{alignat}
\end{subequations}
Setting $\btau_h=\bPi_{\VV} \bpsi-\bsigma_h$ and using the Cauchy-Schwarz inequality provides:
\begin{equation}\label{eq715}
    \|\bPi_{\VV} \bpsi-\bsigma_h\|_{L^2(\Omega)} \leq \|\bPi_{\VV} \bpsi-\bpsi\|_{L^2(\Omega)} \leq \omega_0(h)(\|\bpsi\|_{H^{1/2+\delta}(\Omega)} + \|\bcurl \bpsi\|_{L^2(\Omega)} ).
\end{equation}
Next, with the embedding result in Proposition \ref{emb} and noting that $\Div \bpsi=0$, we obtain
\begin{equation}\label{eq714}
\begin{split}
    \|\bpsi\|_{H^{1/2+\delta}(\Omega)} & \leq C(\|\bcurl \bpsi\|_{L^2(\Omega)}+\|\Div \bpsi\|_{L^2(\Omega)}) \\  
    & \leq C\|\bcurl \bpsi\|_{L^2(\Omega)} = C\|\bbP_Q \f\|_{L^2(\Omega)}  \leq C\|\f\|_{L^2(\Omega)}.
\end{split}
\end{equation}
Thus, we have $\|\bPi_{\VV} \bpsi-\bsigma_h\|_{L^2(\Omega)} \leq C\omega_0(h)\|\f\|_{L^2(\Omega)}$.
Lemma \ref{AT} shows that
\begin{equation}\label{eq713}
     \|\bsigma-\bpsi\|_{L^2(\Omega)}+\|\bw-\bu\|_{L^2(\Omega)} \leq C \omega_1(h)\|{\bm f}\|_{L^2(\Omega)},
\end{equation}
and therefore, combining \SG{\eqref{eq715}, \eqref{eq714}, \eqref{eq713}} we obtain 
\begin{equation*}
    \begin{split}
        \|(\A-\A_h) \f\|_{L^2(\Omega)}& =\|\bsigma-\bsigma_h\|_{L^2(\Omega)}\\
        & \leq \|\bsigma-\bpsi\|_{L^2(\Omega)}+\|\bsigma_h-\bPi_{\VV} \bpsi\|_{L^2(\Omega)}+\|\bPi_{\VV} \bpsi-\bpsi\|_{L^2(\Omega)} \\
        & \leq C (\omega_0(h)+\omega_1(h))\|{\bm f}\|_{L^2(\Omega)}.
    \end{split}
\end{equation*}

By Lemma \ref{IS}, we know that %for $\bbP_Q \bw-\bu_h$, 
there exists $\btau_h \in \VV_h$ such that $\bcurl\btau_h = \bbP_Q \bw-\bu_h$ and  $\|\btau_h\|_{L^2(\Omega)}\le C \SG{\|\bbP_Q \bw-\bu_h\|_{L^2(\Omega)}}$.
Then using \eqref{eq712a} and the Cauchy-Schwarz inequality, we have
\begin{equation*}
    \|\bbP_Q \bw-\bu_h\|_{L^2(\Omega)}^2=(\bbP_Q \bw-\bu_h, \bcurl \btau_h)= -(\bpsi-\bsigma_h,\btau_h) \leq C\|\bbP_Q \bw-\bu_h\|_{L^2(\Omega)}\|\bpsi-\bsigma_h\|_{L^2(\Omega)}.
\end{equation*}
Furthermore, using the triangle inequality, and Assumption 1,  we have
\begin{alignat}{1}
        \|\bw-\bu_h\|_{L^2(\Omega)} &\leq \|\bbP_Q \bw-\bu_h\|_{L^2(\Omega)}+\|\bw-\bbP_Q \bw\|_{L^2(\Omega)}  \nonumber \\
        & \leq C\|\bpsi-\bsigma_h\|_{L^2(\Omega)}+\omega_1(h)\|\bcurl \bw\|_{L^2(\Omega)} \nonumber \\
        & \SG{\leq C (\omega_0(h)+\omega_1(h))\|{\bm f}\|_{L^2(\Omega)}}+ \omega_1(h)\|\bcurl \bw\|_{L^2(\Omega)}. \label{eq717}
\end{alignat}
We also have using Lemma \ref{lemmaemb}, 
\begin{equation*}
\|\bcurl \bw\|_{L^2(\Omega)}=\| \bpsi\|_{L^2(\Omega)} \le C \| \bcurl \bpsi\|_{L^2(\Omega)}= C\|\bbP_Q \f\|_{L^2(\Omega)} \leq C\|\f\|_{L^2(\Omega)},
\end{equation*}
and therefore using this inequality with \eqref{eq717} and \eqref{eq713} we arrive at 
\begin{equation*}
\begin{split}
      \|(\T-\T_h)\f\|_{L^2(\Omega)} & =\|\bu-\bu_h\|_{L^2(\Omega)} \leq \|\bu-\bw\|_{L^2(\Omega)} +\|\bw-\bu_h\|_{L^2(\Omega)}\\
      & \leq C (\omega_0(h)+\omega_1(h))\|{\bm f}\|_{L^2(\Omega)}.
\end{split}
\end{equation*}
\end{proof}

\comment{
\section{Proof of Proposition \ref{prop:exact} (Exact sequences)}
\begin{proof}
In order to prove the complex \eqref{equ:exact} is an exact sequence, we only need to prove $\mathbf{grad~} \mathfrak{S}_r = \textbf{Ker~}(\LL_{r-1}, \bcurl)$, $\bcurl~\LL_{r-1} = \textbf{Ker~}(\NN_{r-1},\Div)$ and $\Div \NN_{r-2} = \WW_{r-3}$. \\
(i) $\forall v \in \mathfrak{S}_r$, $\mathbf{grad~} v \in C[\Omega]^3$, local exact sequence of (3.2b) in \cite{Ex2020} gives that $\mathbf{grad~} \mathfrak{S}_r \subset \textbf{Ker~}(\LL_{r-1}, \bcurl)$. $\forall \bv \in \LL_{r-1}$, local exact sequence of (3.2b) in \cite{Ex2020} also gives that there exists $\phi_T \in S_r(T^{wf})$ such that $\mathbf{grad~}\phi_T = \bv|_T$, so we can find $\phi \in C^1(\Omega)$ such that $\phi|_T=\phi_T$ and then $\phi \in \mathfrak{S}_r$. Therefore, $\mathbf{grad~} \mathfrak{S}_r = \textbf{Ker~}(\LL_{r-1}, \bcurl)$. \\
(ii) Since $\Div \NN_{r-2} \subset \Div \bH(\Div, \Omega) = L^2(\Omega)$, we have $\Div \NN_{r-2} \subset \WW_{r-3}$. On the other hand, $\forall q \in \WW_{r-3} \subset L^2(\Omega) = \Div \bH(\Div, \Omega)$, by \cite[Theorem 4.9]{ON2010}, there exists $\bv \in %\bH(\Div, \Omega) \cap 
\bH^1(\Omega)$ such that $\Div \bv = q$ and then we have $q = \Pi_W q = \Pi_W \Div \bv = \Div \bPi_N \bv$. Therefore $\Div \NN_{r-2} = \WW_{r-3}$. \\
(iii) We already have $\bcurl~\LL_{r-1} \subset \textbf{Ker~}(\NN_{r-1},\Div)$. On the other hand, $\forall \btau \in \textbf{Ker}(\NN_{r-1},\Div) \subset \bH(\Div^0, \Omega)$, by \cite[Theorem 4.9]{ON2010}, there exists $\bv \in %\bH(\bcurl, \Omega) \cap 
\bH^1(\Omega)$ such that $\bcurl \bv = \btau$ and then we have $\btau = \bPi_N \btau = \bPi_N \bcurl \bv = \bcurl \bPi_L \bv$. Therefore, $\bcurl~\LL_{r-1} = \textbf{Ker~}(\NN_{r-1},\Div)$

Next, we are ready to prove the exactness of \eqref{equ:bspace}. By direct checking the boundary conditions, we can get the result.
\end{proof}}
%%%%%%%%%%%%%%%%%%%%%%%%%%%%%%%%%%%%%%%%%%%%%
\section{Proof of Lemma \ref{lem: SZ}}
Before proving Lemma \ref{lem: SZ} let us develop some \SG{notations}. Let  
$\{E_1, \ldots, E_M\}$ 
and $\{ v_1, \ldots, v_m\}$ be the sets of edges and (corner) vertices, respectively, of the polyhedral domain $\Omega$.  For each $E_j$ there exists two faces of $\partial \Omega$, $f_j^1$ and $f_j^2$ that share $E_j$, and we denote their unit-normal vectors by $\bn_j^i$ for $i=1,2$. 
We let $\tilde{\bT}_j^3$ be tangent to $E_j$ and set  $\tilde{\bT}_j^i= \bn_j^i \times  \tilde{\bT}_j^3$ for $i=1,2$. \SG{Note that $\tilde{\bT}_j^i$ for $i=1,2,3$ are linearly independent.} 

Let $\mathcal{V}_h^j$ denote all the vertices of $\mathcal{T}_h^{wf}$ that are in the interior of $E_j$. Recall that $\mathcal{V}_h$ is the set of vertices of $\mathcal{T}_h^{wf}$. We decompose them in the following form 
\begin{equation*}
\mathcal{V}_h= \mathcal{V}_h^C \cup \mathcal{V}_h^E \cup \mathcal{V}_h^0,   
\end{equation*}
where $\mathcal{V}_h^C=\{ v_1, \ldots, v_m\}$ are the corner points and 
$\mathcal{V}_h^E= \cup_{1 \le j \le M } \SG{\mathcal{V}_h^j}$ 
are the vertices lying on edges of $\partial \Omega$. Finally,  $\mathcal{V}_h^0= \mathcal{V}_h \backslash (\mathcal{V}_h^C \cup \mathcal{V}_h^E)$. 

For any $z \in \mathcal{V}_h^j$ we choose $F_z^i \in \Delta_2(\mathcal{T}_h^{wf})$ such that $z$ is a vertex of $F_z^i$ and $F_z^i \subset f_j^i$ for $i=1,2$. We then set $F_z^3=F_z^2$. We also set $\bT_z^i= \tilde \bT_j^i$ for $i=1,2,3$.  If $z \in \mathcal{V}_h^C$ then $z$ is an end point  of some $E_j$ and we define $F_z^i$  and $\bT_z^i$ for $i=1,2,3$ in the same way.

 \begin{proof}[Proof of Lemma \ref{lem: SZ}.]
 First, we summarize the construction of the 
Scott-Zhang interpolant in \cite{SZ1990}. Let $\mathcal{V}_h$ be the set of all the vertices
%interpolation nodes
of $\mathcal{T}_h$.  For each $z \in \mathcal{V}_h$, let $\phi_z$ be corresponding nodal basis of $\mathcal{P}_1^c(\mathcal{T}_h)$, i.e., $\phi_z \in \mathcal{P}_1^c(\mathcal{T}_h)$ satisfies $\phi_z(y)=\delta_{yz}$ for all $y \in \mathcal{V}_h$. For every $z \in \mathcal{V}_h$, 
we identify an arbitrary face $F_z$ of the mesh that contains $z$ with the only constraint that $F_z$ is a boundary face if $z$ is a boundary vertex. 
Then there exists function $\psi_z \in L^{\infty}(F_z)$ such that
 \begin{equation}\label{SZorth}
     \int_{F_z} \psi_z \phi_y = \delta_{yz},~~~~~ \forall y \in \mathcal{V}_h.
 \end{equation}
 Furthermore, the function $\psi$ satisfies the estimate:
 \begin{equation} \label{SZsigma}
     \|\psi_z\|_{L^{\infty}(F_z)} \leq \frac{C}{|F_z|}.
 \end{equation}
 The Scott-Zhang interpolant $\boldsymbol{\tilde{I}}_h$ is given by:
 \begin{equation}
     \boldsymbol{\tilde{I}}_h\btau(x)=\sum\limits_{z \in \mathcal{V}_h} (\int_{F_z}\psi_z\btau)\phi_z(x).
 \end{equation}

\noindent{\bf Construction of $\bI_h^{curl}$:}
 Similar to the construction in \cite{CLM2021} for
 the two-dimensional case,
 we modify the Scott-Zhang interpolant $\tilde{\bm I}_h$ on edges
 and corner vertices of $\Omega$ to preserve the vanishing tangential trace.  We also let $\psi_z^i$ (for $i=1,2,3$) satsify:
 \begin{equation}
         \int_{F_z^i} \psi_z^i \phi_y = \delta_{yz},~~~~~ \forall y \in \mathcal{V}_h,~i=1,2,3
 \end{equation}
 \begin{equation} \label{SZf}
     \|\psi_z^i\|_{L^{\infty}(F_z^i)} \leq \frac{C}{|F_z^i|}.
 \end{equation}
 The modified Scott-Zhang interpolant $\bI_h^{curl}$ is given as 
 \begin{equation} \label{SZcurl}
     \boldsymbol{I}_h^{curl}\btau(x)=\sum\limits_{z \in \mathcal{V}_h^0} (\int_{F_z}\psi_z\btau)\phi_z(x) + \sum \limits_{z \in \mathcal{V}_h^C \cup \mathcal{V}_h^E} \boldsymbol{\beta}^t_z(\btau) \phi_z(x)
 \end{equation}
 where 
 \begin{equation*}
     \boldsymbol{\beta}^t_z(\btau):= \sum\limits_{i=1}^3 \frac{\boldsymbol{C}_i}{(\bT_z^1 \times \bT_z^2) \cdot \bT_z^3} \int_{F_z^i}(\btau\cdot\bT_z^i) \psi_z^i,
 \end{equation*}
 with $\boldsymbol{C}_1=\bT_z^2 \times \bT_z^3$,$\boldsymbol{C}_2=-\bT_z^1 \times \bT_z^3$ and $\boldsymbol{C}_3=\bT_z^1 \times \bT_z^2$. Note that $\boldsymbol{C}_i \cdot \bT_z^{\ell} = \delta_{i\ell }$.

\noindent{\bf Construction of $\bI_h^{div}$:}
If $z \in \mathcal{V}_h^E$  we define $F_z^i$ for $i=1,2,3$ as above. Moreover, we set $\bn_z^i=\bn_j^i$ for $i=1,2$ and $\bn_z^3=\bT_z^3$. On the other hand, if \SG{$z \in \mathcal{V}_h^C$} we let $F_z^i \in \Delta_2(\mathcal{T}_h^{wf})$ for $i=1,2,3$ be such that $F_z^i \subset \partial \Omega$ with each $z$ being a vertex of $F_z^i$, and such that each \SG{lies} on a distinct plane. We then let $\bn_z^i$ be unit normal vectors to $F_z^i$.  
We then define
 \begin{equation} \label{SZdiv}
     \boldsymbol{I}_h^{div}\btau(x)=\sum\limits_{z \in \mathcal{V}_h^0} (\int_{F_z}\psi_z\btau)\phi_z(x) + \sum\limits_{z \in \mathcal{V}_h^E \bigcup \mathcal{V}_h^C} \boldsymbol{\beta}^n_z(\btau) \phi_z(x),
 \end{equation}
 where 
 \begin{equation*}
     \boldsymbol{\beta}^n_z(\btau):= \sum\limits_{i=1}^3 \frac{\boldsymbol{D}_i}{(\bn_z^1 \times \bn_z^2) \cdot \bn_z^3} \int_{F_z^i}(\btau\cdot\bn_z^i) \psi_z^i,
 \end{equation*}
 with $\boldsymbol{D}_1=\bn_z^2 \times \bn_z^3$, $\boldsymbol{D}_2=-\bn_z^1 \times \bn_z^3$ and $\boldsymbol{D}_3=\bn_z^1 \times \bn_z^2$. Note that $\boldsymbol{D}_i \cdot \bn_z^\ell = \delta_{i\ell}$.

\noindent{\bf Proof of estimates \eqref{SZest}--\eqref{SZestd}:}
We prove the estimates in four steps:
\begin{itemize}
\item[(ia)]  $\boldsymbol{I}_h^{curl}: \bH^{1/2+\delta}(\Omega) \cap \bH_0(\mathbf{curl},\Omega) \rightarrow \boldsymbol{\mathcal{P}}_1^c(\mathcal{T}_h) \cap \bH_0(\mathbf{curl},\Omega)$: 
 if $\btau \in \bH^{1/2+\delta}(\Omega) \cap \bH_0(\mathbf{curl},\Omega)$, then $\btau \times \bn |_{\partial \Omega} = 0$,
 and so 
 %for the normal vector $\bn$ on ${\partial \Omega}$ and in other words, 
 $(\btau \cdot \bT)(z)=0$ for any tangential vector $\bT$ at $z \in \partial \Omega$. Therefore, for every $z \in \mathcal{V}_h^E \bigcup \mathcal{V}_h^C$, $\boldsymbol{I}_h^{curl}\btau(z)=\boldsymbol{\beta}^t_z(\btau)=0$. 
 On the other hand, for every $z \in \mathcal{V}_h^0 \cap \partial \Omega$, we have $\boldsymbol{I}_h^{curl}\btau(z) \times \bn_{F_z}=\int_{F_z}\psi_z (\btau \times \bn_{F_z})=0$, where $\bn_{F_z}$ is the outward normal vector of $F_z \subset \partial \Omega$. 
These two identities yield $(\bI_h^{curl} \btau \times \bn)|_{\p\Omega} = 0$.
 
\item[(ib)] $\boldsymbol{I}_h^{div}: \bH^{1/2+\delta}(\Omega) \cap \bH_0({\rm div},\Omega) \rightarrow \boldsymbol{\mathcal{P}}_1^c(\mathcal{T}_h) \cap \bH_0({\rm div},\Omega)$: if $\btau \in \bH^{1/2+\delta}(\Omega) \cap \bH_0(\Div,\Omega)$, then $\btau \cdot \bn |_{\partial \Omega} = 0$. Suppose $z \in \mathcal{V}_h^E$, then the definition of $\bI_h^{div}$ shows $\bI_h^{div} \btau(z) \cdot \bn_{z}^i= \bbeta^n_z(\btau)\cdot \bn_z^i=0$ for $i=1,2$. On the other hand, if $z \in \mathcal{V}_h^C$ then $\boldsymbol{I}_h^{div}\btau(z) \cdot \bn_z^i=\boldsymbol{\beta}^n_z(\btau) \cdot \bn_z^i=0$, $i=1,2,3$,
and so in this case $\bI_h^{div} \btau(z)= 0$. Finally, if  $z \in \mathcal{V}_h^0 \cap \partial \Omega$, we have $\boldsymbol{I}_h^{div}\btau(z) \cdot \bn_{F_z}=\int_{F_z}\psi_z (\btau \cdot \bn_{F_z})=0$, where $\bn_{F_z}$ is the outward normal vector of $F_z \subset \partial \Omega$. We conclude that $(\bI_h^{curl} \btau)\cdot \bn|_{\p \Omega} = 0$.
 \item[(ii)] $\boldsymbol{I}_h^{curl}$ and $\boldsymbol{I}_h^{div}$ \textit{are projections.}  
 We show that if $\btau_1\in \bpol_1(\calT_h)\cap \bH(\mathbf{curl},\Omega)$
 and $\btau_2\in \bpol_1(\calT_h)\cap \bH({\rm div},\Omega)$,
 then $\bI_h^{curl} \btau_1 = \btau_2$ and $\bI_h^{div} \btau_2 = \btau_2$.
 Since $\btau_i \in \boldsymbol{\mathcal{P}}_1^c(\mathcal{T}_h)$ ($i=1,2$), we can write
\[
\btau_i(x) = \sum_{z \in \calV_h} \btau_i(y) \phi_{y}(x).
\]
 If $z \in \mathcal{V}_h^0 $,  
 $\boldsymbol{I}_h^{curl}\btau_1(z) = \int_{F_z}\psi_z\btau_1 = \sum\limits_{y \in \mathcal{V}_h}\btau_1(y)\int_{F_z}\psi_z \phi_{y}=\btau_1(z)$ by (\ref{SZorth}).  
 Similarly, $\boldsymbol{I}_h^{div}\btau_2(z) =\btau_2(z)$. 
 However, if $z \in \mathcal{V}_h^E \bigcup \mathcal{V}_h^C$ then $\boldsymbol{I}_h^{curl}\btau_1(z) = \boldsymbol{\beta}^t_z(\btau_1)$ and 
 $\boldsymbol{I}_h^{div}\btau_2(z) = \boldsymbol{\beta}^n_z(\btau_2)$. 
 We have $\boldsymbol{\beta}_z^t(\btau_1) \cdot \bT_z^i = \int_{F_z^i}(\btau_1\cdot\bT_z^i) \psi_z^i =\btau_1(z)\cdot\bT_z^i$ for $i=1,2,3$. 
 Recalling these three tangential vectors are linearly independent, we conclude  $\boldsymbol{I}_h^{curl}\btau_1(z) =\btau_1(z)$.

 Similarly, since $\boldsymbol{\beta}_z^n(\btau_2) \cdot \bn_z^i = \int_{F_z^i}(\btau_2\cdot\bn_z^i) \psi_z^i =\btau_2(z)\cdot\bn_z^i$, $i=1,2,3$, we have $\boldsymbol{I}_h^{div}\btau_2(z) =\btau_2(z)$.
\item[(iii)] \textit{Stability estimate.} By an inverse estimate  \eqref{inv1} we have 
 \begin{equation*}
     |\boldsymbol{I}_h^{curl}\btau|_{H^{1/2+\delta}(T)} \leq Ch_T^{-1/2-\delta}\|\boldsymbol{I}_h^{curl}\btau\|_{L^2(T)}.
 \end{equation*}
 We will use the following trace inequality (see \cite[Proposition 3.1]{PC2013}; also follows from \eqref{traceinq}, \SG{\eqref{equ:inv}} and a scaling argument),
 \begin{equation}\label{856}
     \|\btau\|_{L^1(F_z)} \leq C(h_T^{\frac{1}{2}}\|\btau\|_{L^2(T)}+h_T^{1+\delta}|\btau|_{H^{\frac{1}{2}+\delta}(T)}), \quad  T \in \mathcal{T}_h, F_z \in \Delta_2(T).
 \end{equation}
 Since the number of edges and vertices  of $\partial \Omega$ is finite  we have  $M_t:= \max\limits_{z \in \mathcal{V}_h^E \bigcup \mathcal{V}_h^C} \frac{1}{(\bT_z^1 \times \bT_z^2) \cdot \bT_z^3 }$ is finite. 
 Using the $L^\infty$ estimates of $\psi_z$, \eqref{SZsigma}, \eqref{SZf}, \eqref{856} and the estimate $\|\phi_z\|_{L^2(T)}\le C h_T^{3/2}$ for $z\in \bar T$, we have
 \begin{equation*}
 \begin{split}
      \|\bI_h^{curl}\btau\|_{L^2(T)}  \leq & \sum\limits_{\substack{z \in \mathcal{V}_h^0 \\ z \in \Bar{T}}}\|\phi_z\|_{L^2(T)}\|\psi_z\|_{L^{\infty}(F_z)}\|\btau\|_{L^1(F_z)} \\
      &~~~~ +M_t \sum\limits_{\substack{z \in \mathcal{V}_h^E \bigcup \mathcal{V}_h^C \\ z \in \Bar{T}}} \|\phi_z\|_{L^2(T)}\sum\limits_{i=1}^3 (\|\psi_z^i\|_{L^{\infty}(F_z^i)}\|\btau\|_{L^1(F_z^i)}) \\
      & \leq C(1+M_t)\big(\|\btau\|_{L^2(\omega(T))}+h_T^{1/2+\delta}|\btau|_{H^{\frac{1}{2}+\delta}(\omega(T))}\big).
 \end{split}
 \end{equation*}
 %where we used the estimate $\|\phi_z\|_{L^2(T)} \leq Ch_T^{3/2}$ and also use the H\"{o}lder's inequality to estimate $\|\btau\|_{L^1(T_z)}$. 
 Therefore, we conclude
 \begin{equation}\label{SZstab}
      h_T^{1/2+\delta}|\boldsymbol{I}_h^{curl}\btau|_{H^{1/2+\delta}(T)} + \|\boldsymbol{I}_h^{curl}\btau\|_{L^2(T)} \leq C(1+M_t)(\|\btau\|_{L^2(\omega(T))}+h_T^{1/2+\delta}|\btau|_{H^{1/2+\delta}(\omega(T))}).
 \end{equation}
% and similarly, with  $\|\phi_z\|_{H^1(T)} \leq Ch_T^{1/2}$, we have:
% \begin{equation}\label{SZstab2}
%     \|\boldsymbol{I}_h^{curl}\btau\|_{H^1(T)} \leq C(1+M_t)(\|\btau\|_{H^1(\omega(T))}+h_T^{-1}|\btau|_{L^2(\omega(T))}).
% \end{equation}
 On the other hand, by following the same process, we obtain
%   \begin{equation}\label{SZstabDiv}
%      \|\boldsymbol{I}_h^{div}\btau\|_{H^1(T)} \leq C(1+M_n)(\|\btau\|_{H^1(\omega(T))}+h_T^{-1}|\btau|_{L^2(\omega(T))}),
%  \end{equation}
 \begin{equation}\label{SZstabDiv}
           h_T^{1/2+\delta}|\boldsymbol{I}_h^{div}\btau|_{H^{1/2+\delta}(T)} + \|\boldsymbol{I}_h^{div}\btau\|_{L^2(T)} \leq C(1+M_n)(\|\btau\|_{L^2(\omega(T))}+h_T^{1/2+\delta}|\btau|_{H^{1/2+\delta}(\omega(T))}),
 \end{equation}
 where $M_n:= \max\limits_{z \in \mathcal{V}_h^E \bigcup \mathcal{V}_h^C} \frac{1}{(\bn_z^1 \times \bn_z^2) \cdot \bn_z^3 }$.
 
 \item[(iv)] \textit{Estimate (\ref{SZest})}: Let $\boldsymbol{\omega} = \frac{1}{|\omega(T)|} \int_{\omega(T)}\btau\, dx$, so that  
by the Poincare inequality (cf.~\cite[Section 4]{SZ1990} and \cite[Proposition 2.1]{IR2018})
 \begin{equation}\label{SZest4}
     \|\btau-\boldsymbol{\omega}\|_{L^2(\omega(T))} \leq Ch_T^{1/2+\delta} |\btau|_{H^{1/2+\delta}(\omega(T))}.
 \end{equation}

 Since $\boldsymbol{\omega}$ is a constant, there holds $\boldsymbol{I}_h^{curl}\boldsymbol{\omega}|_T=\boldsymbol{\omega}$.
 Using the estimate \eqref{SZest4} and the stability result \eqref{SZstab}, we obtain
 \begin{equation*}
     \begin{split}
         \|\bI_h^{curl}\btau-\btau\|_{L^2(T)} & =  \|\boldsymbol{I}_h^{curl}(\btau-\boldsymbol{\omega})-(\btau-\boldsymbol{\omega})\|_{L^2(T)} \\
         & \leq \SG{C}(1+M_t) (\|\btau-\boldsymbol{\omega}\|_{L^2(\omega(T))}+h_T^{1/2+\delta}|\btau|_{H^{1/2+\delta}(\omega(T))})\\
         & \leq C(1+M_t) h_T^{1/2+\delta}|\btau|_{H^{1/2+\delta}(\omega(T))}.
     \end{split}
 \end{equation*}
 Similarly, we have
 \begin{equation*}
     \begin{split}
         |\bI_h^{curl}\btau|_{H^{1/2+\delta}(T)} & =  \SG{|\boldsymbol{I}_h^{curl}(\btau-\boldsymbol{\omega})|_{H^{1/2+\delta}(T)}}
         %\le \|\boldsymbol{I}_h^{curl}(\btau-\boldsymbol{\omega})\|_{H^{1/2+\delta}(T)} 
         \\
         & \leq C (1+M_t) (h_T^{-1/2-\delta}\|\btau-\boldsymbol{\omega}\|_{L^2(\omega(T))}+|\btau|_{H^{1/2+\delta}(\omega(T))})\\
         & \leq C(1+M_t)|\btau|_{H^{1/2+\delta}(\omega(T))}.
     \end{split}
 \end{equation*}
 These last two estimates complete the proof of (\ref{SZest}).
 By the same process, (but replacing the stability estimate \eqref{SZstab} with \eqref{SZstabDiv})
we obtain the estimate for $\boldsymbol{I}^{div}_h$ \eqref{SZestd}.
 \end{itemize}
 \end{proof}
 
 %%%%%%%%%%%%%%%%%%%%%%%%%%%%%%%%%%%%%%%%%%%%
 \section{Scaling Properties}
 In this section, \SG{we need to prove Proposition \ref{prop:dialT}, Lemma \ref{lem:trace}, %Lemma \ref{lem:lp},
 and Lemma \ref{lem:lift}. }
 \SG{\subsection{Proof of Proposition \ref{prop:dialT}}
 \begin{proof}
 The proof of (i) follows from the definition of $\hat T$
 and Definition \ref{def:calKDef}.
 The identities given in (ii) follow  from the chain rule, 
 and the scaling result in (iii) is a direct application of \cite[Lemma 2.9]{frac2014}.
\end{proof}}
 
\SG{To prove those two lemmas}, we  transform $\hat T$ to the standard reference tetrahedron $\tilde T$ with unit size.
 \begin{definition} \label{def:transK}
 Let $\tilde  T$ be the tetrahedron with vertices  $(0,0,0),(1,0,0),(0,1,0),(0,0,1)$.
 For any tetrahedra $\hat T$ % such that ${\rm diam}(\hat T) = 1$ and $\frac{1}{c_0} \le \rho_{\hat T}$, 
 set $\phi_{\hat T}:\tilde T\to \hat T$ to be an affine diffeomorphism with
  \[
  \phi_{\hat T}(\tilde x) = B_{\hat T}\SG{\tilde x} +b_{\hat T}\qquad \tilde x\in \tilde T
  %~~~ \hat \kappa = \kappa \circ \phi_K,
  \]
for some $B_{\hat T}\in \mathbb{R}^{3\times 3}$ and $b_{\hat T}\in \mathbb{R}^3$.

 \end{definition}
 \begin{remark}\label{rem:MonkRegul}
 There holds (cf.~\cite[Page 80 and Lemma 5.10]{FEM2003})
 \begin{alignat*}{2}
 &|\det(B_{\hat T})| = \frac{|\hat T|}{|\tilde T|} = 6 |\hat T|,\quad \|B_{\hat T}\|\le h_{\hat T} = 1,\quad \|B_{\hat T}^{-1}\|\le \rho_{\hat T}^{-1}\le c_0.
 \end{alignat*}
%  \[ |\det B_K| = , \quad \frac{\pi \rho_K^3}{6} \le |K| \le \frac{\pi h_K^3}{6} = \frac{\pi}{6}; \]
%  \[ \|B_K\| \le h_K = 1, \quad \|B^{-1}_K\| \le 1/\rho_K; \]
%  \[ \int_{\p K} \kappa\, dA = \int_{\p K_0} \hat \kappa |\det B_K| |B_K^{-t} \hat n| d\hat A.
%  \] 
 
 \end{remark}
 
% Note that we could let $K$ to be $\hat T$. Such affine transformation have good properties shown in .
%  \begin{prop} \label{prop:transK}
%  With Definition \ref{def:transK}, we have:

%  \end{prop}
%We start by proving \textbf{Lemma } with Proposition \ref{prop:transK}. 

%%%%%%%%%%%%%%%%%%%%%%%%
\subsection{Proof of Lemma \ref{lem:trace}}
We require an intermediate result
to prove Lemma \ref{lem:trace}.
% \textbf{Lemma } and \textbf{Lemma \ref{lem:lift}} need Sobolev norms and fractional Sobolev norms, so we need to first have the relations of these norms between $K_0$ and $K$. 
\begin{prop} \label{prop:frac}
 (Equivalence norms of Sobolev space)
 \begin{itemize}
 \item[(i)] $\forall \hat \bv \in \bH^s(\hat T)$ with $0<s<1$, we have
 \begin{equation} \label{equ:frac}
     C_1 |\hat \bv|_{H^{s}(\hat T)}\le  |\hat \bv\circ \phi_{\hat T}|_{H^{s}(\tilde T)} \le C_2|\hat \bv|_{H^{s}(\hat T)},
 \end{equation} 
 where $C_1$ and $C_2$ depends on $c_0$ (the shape regularity of $\calT_h$) and $s$.
 \item[(ii)] For all $\hat \kappa \in W^{s,q}(\hat e)$ with $sq<1$ and $0<s<1/2$, we have 
 \[ 
 C_3\|\hat \kappa\circ \phi_{\hat T}\|_{W^{s,q}(\tilde e)} \le \| \hat \kappa\|_{W^{s,q}(\hat e)},
 \]
 where $\tilde e = \phi_{\hat T}^{-1}(\hat e)$, and  $C_3$ depends on $c_0$, $s$ and $q$. 
\item[(iii)] For all $\hat \kappa \in W^{1,p}(\hat T)$, we have
 \begin{equation} \label{equ:sobolev}
    \|\hat \kappa\|_{W^{1,p}(\hat T)} \le  C_4  \|\kappa\circ \phi_{\hat T}\|_{W^{1,p}(\tilde T)}.
 \end{equation} 
Moreover, for any $\hat F \in \Delta_2(\hat T)$, 
%we have $\phi_K(\hat F) = F$ and $\forall \kappa \in W^{1,p}(F)$
  \begin{equation} \label{equ:psobolev}
    \|\hat \kappa\|_{W^{1,p}(\hat F)} \le  C_5  \| \hat \kappa\circ \phi_{\hat T}\|_{W^{1,p}(\tilde F)}
    \quad \forall \hat \kappa \in W^{1,p}(\hat F),
 \end{equation} 
 where $\tilde F = \phi_{\hat T}^{-1}(\hat F)$, and $C_4$ and $C_5$ depends on $c_0$ and $p$.
 \end{itemize}
 \end{prop}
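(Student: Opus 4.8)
The plan is to derive every estimate from the change of variables induced by the affine map $\phi_{\hat T}(\tilde x)=B_{\hat T}\tilde x+b_{\hat T}$ of Definition \ref{def:transK}, and then to absorb the resulting Jacobian and metric factors using the uniform bounds $|\det B_{\hat T}|=6|\hat T|$, $\|B_{\hat T}\|\le 1$ and $\|B_{\hat T}^{-1}\|\le c_0$ recorded in Remark \ref{rem:MonkRegul}. Since $\hat T$ has unit diameter and is shape regular (Proposition \ref{prop:dialT}(i)), the volume $|\hat T|$, and hence $|\det B_{\hat T}|$, is bounded above and below by constants depending only on $c_0$, so all such factors are two-sided controlled. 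For part (i), I would substitute $x=\phi_{\hat T}(\tilde x)$, $y=\phi_{\hat T}(\tilde y)$ into the double integral defining $|\hat\bv|_{H^s(\hat T)}^2$. This yields a factor $|\det B_{\hat T}|^2$ from $dx\,dy$ and turns the denominator into $|B_{\hat T}(\tilde x-\tilde y)|^{3+2s}$. The elementary two-sided bound
\[
\|B_{\hat T}^{-1}\|^{-1}|\tilde x-\tilde y|\le |B_{\hat T}(\tilde x-\tilde y)|\le \|B_{\hat T}\|\,|\tilde x-\tilde y|
\]
then converts this denominator into $|\tilde x-\tilde y|^{3+2s}$ up to constants, producing the equivalence \eqref{equ:frac} with $C_1,C_2$ depending only on $c_0$ and $s$.

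For part (ii), I would perform the same substitution on the one-dimensional edge. The key simplification is that along the line containing $\tilde e$ one has $|B_{\hat T}(\tilde x-\tilde y)|=J_e\,|\tilde x-\tilde y|$ \emph{exactly}, where $J_e=|B_{\hat T}\tau_{\tilde e}|$ is the arc-length scaling factor of the unit tangent $\tau_{\tilde e}$, and $J_e\in[c_0^{-1},1]$. Substituting into the $L^q$ term contributes a factor $J_e$, while the fractional seminorm (a double integral over $\hat e$ with kernel $|x-y|^{-(1+sq)}$) contributes a net factor $J_e^{\,2-(1+sq)}=J_e^{\,1-sq}$. Because $sq<1$ the exponent $1-sq$ is positive, so both factors are bounded below by powers of $c_0^{-1}$; taking $q$-th roots gives the one-sided estimate with $C_3$ depending on $c_0$, $s$, $q$.

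For part (iii), the integer-order case is classical. One has $\|\hat\kappa\|_{L^p(\hat T)}^p=|\det B_{\hat T}|\,\|\hat\kappa\circ\phi_{\hat T}\|_{L^p(\tilde T)}^p$, and the chain rule gives $(\nabla\hat\kappa)\circ\phi_{\hat T}=B_{\hat T}^{-T}\nabla(\hat\kappa\circ\phi_{\hat T})$, whence $|\nabla\hat\kappa\circ\phi_{\hat T}|\le\|B_{\hat T}^{-1}\|\,|\nabla(\hat\kappa\circ\phi_{\hat T})|$. Combining these two identities and bounding $|\det B_{\hat T}|$ and $\|B_{\hat T}^{-1}\|\le c_0$ yields \eqref{equ:sobolev}. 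The face estimate \eqref{equ:psobolev} follows by the identical argument after restricting $\phi_{\hat T}$ to the plane of $\tilde F$: the induced surface measure scales by the (bounded) two-dimensional Jacobian of this restriction, and the surface gradient is controlled by the operator norm of $B_{\hat T}^{-1}$ acting in the tangent plane.

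I expect the main obstacle to be the lower-dimensional bookkeeping in part (ii) and in \eqref{equ:psobolev}: one must verify that the induced arc-length (resp. surface) measure and the intrinsic distance on $\hat e$ (resp. $\hat F$) scale by factors that are still pinched two-sided by $\|B_{\hat T}\|$ and $\|B_{\hat T}^{-1}\|$, since the ambient operator-norm bounds do not automatically restrict to a proper subspace. For the edge this is exact, as noted above; for the face one argues that the singular values of $B_{\hat T}$ restricted to the tangent plane still lie in $[\|B_{\hat T}^{-1}\|^{-1},\|B_{\hat T}\|]$, so the two-dimensional volume factor and the metric distortion on $\tilde F$ remain controlled by $c_0$.
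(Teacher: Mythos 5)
Your proposal is correct and follows essentially the same route as the paper: a direct change of variables under the affine map $\phi_{\hat T}$, absorbing the Jacobian and metric factors via the bounds $|\det B_{\hat T}|=6|\hat T|$, $\|B_{\hat T}\|\le 1$, $\|B_{\hat T}^{-1}\|\le c_0$ of Remark \ref{rem:MonkRegul} together with shape regularity. The only difference is cosmetic: the paper outsources part (i) to a cited scaling lemma and omits the details of (iii), whereas you carry out those computations explicitly (and your exact arc-length identity on the edge is a slightly cleaner version of the paper's $|\hat e|^2/|\tilde e|^2$ bookkeeping).
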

 \begin{proof}
 \begin{itemize}
\item[(i)]
This estimate follows from \cite[Lemma 2.9]{frac2014} and Remark \ref{rem:MonkRegul}.
 \item[(ii)]  
 Let $\hat \kappa\in W^{s,q}(\hat e)$
 and to ease notation, set $\tilde \kappa = \hat \kappa \circ \phi_{\hat T}$.
 Recalling the definition of $W^{s,q}(\hat e)$
 and applying a change of variables, we have
  \begin{equation*}
  \begin{split}
      |\hat \kappa|^q_{W^{s,q}(\hat e)} & = \int_{\hat e} \int_{\hat e} \frac{|\hat \kappa(\hat x)-\hat \kappa(\hat y)|^{q}}{|\hat x-\hat y|^{1+sq}} \, d\hat x\, d\hat y = 
      \frac{|\hat e|^2}{|\tilde e|^2}\int_{\tilde e} \int_{\tilde e} \frac{|\tilde \kappa(\tilde x)-\tilde \kappa(\tilde y)|^{q}}{|B_{\hat T}(\tilde x-\tilde y)|^{1+sq}}\, d\tilde x\,d \tilde y\\
        &\ge C |\tilde \kappa|^q_{W^{s,k}(\tilde e)}.
  \end{split}
 \end{equation*} 
Because
\begin{equation*}
  \begin{split}
      \|\hat \kappa\|^q_{L^{q}(\hat e)} & = \frac{|\hat e|}{|\tilde e|}\|\tilde \kappa\|_{L^q(\tilde e)}^q
      \ge C\|\tilde \kappa\|_{L^q(\tilde e)}^q,
  \end{split}
 \end{equation*} 
we conclude $\|\hat \kappa\|_{W^{s,q}(\hat e)}\ge C_3\|\tilde \kappa\|_{W^{s,q}(\tilde e)}$.

\item[(iii)] The proof of \eqref{equ:sobolev} and \eqref{equ:psobolev}
follows the same arguments as (ii); we omit the details.
 \end{itemize}
\end{proof}

Now we are ready to prove Lemma \ref{lem:trace}.
\begin{proof}[Proof of Lemma \ref{lem:trace}.]

Let $\hat \bv\in \bH^{1/2+\delta}(\hat T)$
and set $\tilde \bv = \hat \bv\circ \phi_{\hat T}$.
Then $\tilde \bv\in \bH^{1/2+\delta}(\tilde T)$ by Proposition \ref{prop:frac}. Setting $s=\delta-1+3/p$ then we see that since $p < \frac{2}{1-\delta}$ we have $s>1/p$. Hence by the trace inequality \eqref{traceinq} we have
\[  
\|\tilde \bv\|_{L^p(\p \tilde T)} \le \| \tilde \bv\|_{W^{s-1/p,p}(\p \tilde T)} 
\le C\| \tilde \bv\|_{W^{s,p}(\tilde T)}. 
\]
By our choice we have $s-3/p=1/2+\delta-3/2$ and hence by the Sobolev inequality \eqref{sobolevembedding} 
\[  
\| \tilde \bv\|_{W^{s,p}(\tilde T)} \le C\| \tilde \bv\|_{H^{1/2+\delta}(\tilde T)}. 
\]
A change of variables along with \eqref{equ:frac} then shows
 \SG{\begin{equation*}
     \begin{split}
         \| \hat \bv\|_{L^p(\p \hat T)} &
         \le C \|\tilde \bv\|_{L^p(\p \tilde T)}\le C\| \tilde \bv\|_{H^{1/2+\delta}(\tilde T)}
         \le C \|\hat \bv\|_{H^{1/2+\delta} (\hat T)}.
     \end{split}
 \end{equation*}}
\end{proof}

\subsection{Proof of Lemma  \ref{lem:lift}}
We start with the same result for the reference element $\tilde T$.  
%%%%%%%%%%%%%%%%
\begin{lemma} \label{lem:lift0}
Let $\tilde e\in \Delta_1(\tilde T)$
and $\tilde F\in \Delta_2(\tilde T)$ be an edge and face
of the reference tetrahedron, respectively, such that $\tilde e \in \Delta_1(\tilde F)$.
Then for $1< q<2$, there exists $\tilde E: \pol_{r-3}(\tilde e) \rightarrow W^{1,q}(\tilde T)$  such that $(\tilde E \tilde \kappa)|_{\tilde e} = \tilde \kappa$, $(\tilde E \tilde \kappa)|_{\p \tilde F \backslash \tilde e} = 0$, $(\tilde E \tilde \kappa)|_{\p \tilde T \backslash \tilde F} = 0$  for all $\tilde \kappa\in \pol_{r-3}(\tilde e)$,
 and the following estimates hold:
 \begin{align*}
 \|\tilde E\tilde \kappa \|_{W^{1,q}(\tilde F)} 
 &\le C \|\tilde \kappa\|_{W^{1-1/q,q}(\tilde e)},\\
 \|\tilde E\tilde \kappa\|_{W^{1,q}(\tilde T)} 
 &\le C \|\tilde \kappa\|_{W^{1-1/q,q}(\tilde e)}.
\end{align*}
\end{lemma}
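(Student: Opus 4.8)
The target is Lemma \ref{lem:lift0} on the reference tetrahedron $\tilde T$; the passage from there to Lemma \ref{lem:lift} is handled by Proposition \ref{prop:frac}. The plan is to construct $\tilde E$ in two explicit stages: first extend $\tilde \kappa$ from the edge $\tilde e$ to the face $\tilde F$, then lift the result from $\tilde F$ into $\tilde T$. Throughout I would exploit that $\pol_{r-3}(\tilde e)$ is finite-dimensional, so that once I exhibit a \emph{linear} map $\tilde E$ whose image lies in $W^{1,q}$ and which satisfies the three trace conditions, both claimed bounds follow automatically: all norms on $\pol_{r-3}(\tilde e)$ are equivalent and $\|\cdot\|_{W^{1-1/q,q}(\tilde e)}$ is one of them. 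Since every incident (edge, face) pair of $\tilde T$ is related by an affine symmetry, I may fix $\tilde e$ and $\tilde F$ once and for all, allowing all constants to depend on $r$ and the reference configuration.

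\emph{Stage 1 (edge to face).} First I would build $g:=(\tilde E\tilde \kappa)|_{\tilde F}\in W^{1,q}(\tilde F)$ with $g|_{\tilde e}=\tilde \kappa$ and $g=0$ on the two remaining edges of $\tilde F$. The only obstruction is at the endpoints $v_0,v_1$ of $\tilde e$, where $\tilde \kappa$ is generally nonzero yet $g$ must vanish along the adjacent edge. Near each such vertex I would pass to polar coordinates $(\rho,\theta)$ and carry the value down to $0$ by a transition that is homogeneous of degree zero (depending only on the angle $\theta$); away from the two vertices $g$ is smooth. Such a transition has gradient of size $\rho^{-1}$, and $\int \rho^{-q}\,\rho\, d\rho\, d\theta<\infty$ precisely because $q<2$. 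This yields $g\in W^{1,q}(\tilde F)$ and, by finite-dimensionality, $\|g\|_{W^{1,q}(\tilde F)}\le C\|\tilde \kappa\|_{W^{1-1/q,q}(\tilde e)}$, which is the first estimate.

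\emph{Stage 2 (face to tetrahedron).} Next I would form the boundary datum $G$ on $\p \tilde T$ equal to $g$ on $\tilde F$ and to $0$ on $\p\tilde T\setminus \tilde F$, and lift it through the right inverse of the trace operator \eqref{righttraceinq} with $s=1,\ p=q$. For this I must check $G\in W^{1-1/q,q}(\p\tilde T)$ with norm controlled by $\|g\|_{W^{1,q}(\tilde F)}$. On $\tilde F$ we have $G=g\in W^{1,q}(\tilde F)\hookrightarrow W^{1-1/q,q}(\tilde F)$, and across the edges of $\p \tilde F$ on which $g$ vanishes the zero extension is harmless. The delicate term is the Gagliardo double integral coupling $\tilde F$ (where $G=g\neq 0$ on $\tilde e$) to the adjacent face across $\tilde e$; estimating it reduces to showing $\int_{\tilde F}|g(x)|^q\,\mathrm{dist}(x,\tilde e)^{1-q}\, dx<\infty$, which holds since $g$ is bounded near $\tilde e$ and $1-q\in(-1,0)$ for $1<q<2$. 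Applying \eqref{righttraceinq} then produces $\tilde E\tilde \kappa\in W^{1,q}(\tilde T)$ with $(\tilde E\tilde \kappa)|_{\p\tilde T}=G$, which gives $(\tilde E\tilde \kappa)|_{\tilde e}=\tilde \kappa$, $(\tilde E\tilde \kappa)|_{\p\tilde F\setminus \tilde e}=0$, $(\tilde E\tilde \kappa)|_{\p\tilde T\setminus \tilde F}=0$, and $\|\tilde E\tilde \kappa\|_{W^{1,q}(\tilde T)}\le C\|G\|_{W^{1-1/q,q}(\p\tilde T)}\le C\|\tilde \kappa\|_{W^{1-1/q,q}(\tilde e)}$.

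\emph{Main obstacle.} The crux, and the only place the hypothesis $1<q<2$ is genuinely used, is the compatibility of the conditions $(\tilde E\tilde \kappa)|_{\tilde e}=\tilde \kappa\neq 0$ and $(\tilde E\tilde \kappa)|_{\p\tilde T\setminus \tilde F}=0$ along the shared edge $\tilde e$. For $q\ge 2$ a $W^{1-1/q,q}$ function on a face has a well-defined trace on its edges, which would force $\tilde \kappa=0$; for $q<2$ no such edge trace exists, so the jump of $G$ across $\tilde e$ does not leave $W^{1-1/q,q}(\p\tilde T)$. Making this quantitative, namely the finiteness of the cross-edge Gagliardo integral in Stage 2 together with the $\rho^{-1}$ gradient estimate in Stage 1, is where the real work lies; the remainder is bookkeeping closed off by equivalence of norms on $\pol_{r-3}(\tilde e)$.
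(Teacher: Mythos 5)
Your proposal is correct, and its second stage (extend $g$ by zero from $\tilde F$ to $\p\tilde T$, verify membership in $W^{1-1/q,q}(\p\tilde T)$ via the cross-face Gagliardo integral, then lift with the right inverse of the trace \eqref{righttraceinq}) coincides with what the paper does. Where you diverge is the first stage. The paper never builds $g$ by hand: it first extends $\tilde\kappa$ by zero along the one-dimensional boundary $\p\tilde F$, shows this zero extension stays in $W^{1-1/q,q}(\p\tilde F)$ because $\int_{\tilde e}\int_{\p\tilde F\setminus\tilde e}|\tilde x-\tilde y|^{-q}$ is finite for $q<2$ (together with $\|\tilde\kappa\|_{L^\infty(\tilde e)}\le C\|\tilde\kappa\|_{W^{1-1/q,q}(\tilde e)}$, itself a finite-dimensionality fact), and then applies \eqref{righttraceinq} on the face to produce $\tilde\kappa_2\in W^{1,q}(\tilde F)$. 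You instead construct $g$ explicitly with an angular cutoff near the endpoints of $\tilde e$, using $q<2$ to make the $\rho^{-1}$ gradient $q$-integrable, and then invoke boundedness of a linear map on a finite-dimensional domain to get the norm estimate. Both mechanisms exploit exactly the same phenomenon (absence of a point/edge trace for $W^{1-1/q,q}$ when $q<2$), so the hypothesis enters at the same place; the paper's route is more uniform in that it reuses the same two tools (zero extension plus \eqref{righttraceinq}) at both stages and requires no explicit local construction, while yours makes the singular behavior at the endpoints of $\tilde e$ completely concrete and shortens the bookkeeping by leaning on equivalence of norms on $\pol_{r-3}(\tilde e)$. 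Your identification of the crux --- compatibility of $\tilde E\tilde\kappa|_{\tilde e}=\tilde\kappa$ with the vanishing on $\p\tilde T\setminus\tilde F$, possible only because $q<2$ --- matches the role this restriction plays in the paper's estimates.
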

\begin{proof}
We first extend $\tilde \kappa \in \pol_{r-3}(\tilde e) \subset W^{1-1/q,q}(\tilde e)$ by zero to $\p \tilde F$,  denoted by $\tilde \kappa_1$. With $q<2$ and the definition of $W^{1-1/q,q}(\tilde e)$, we have
\begin{equation*}
    \begin{split}
        \|\tilde \kappa_1\|_{W^{1-1/q,q}(\p \tilde F)}^q & = \|\tilde \kappa\|_{W^{1-1/q,q}(\tilde e)}^q + 2 \int_{e}\int_{\p \tilde F \backslash \tilde e} \frac{|\tilde \kappa(\tilde x)|^q}{|\tilde x-\tilde y|^q}\, \tilde y\, d \tilde x  \\
        & \le \|\tilde \kappa\|_{W^{1-1/q,q}(\tilde e)}^q + 2\|\tilde \kappa\|_{L^{\infty}(\tilde e)}^q \int_{\tilde e}\int_{\p \tilde F \backslash \tilde e} \frac{1}{|\tilde x-\tilde y|^q}\, d\tilde y\, d \tilde x  \\
        & \le \|\tilde \kappa\|_{W^{1-1/q,q}(\tilde e)}^q + C \|\tilde \kappa\|_{L^{\infty}(\tilde e)}^q \le C\|\tilde \kappa\|_{W^{1-1/q,q}(\tilde e)}^q,
    \end{split}
\end{equation*}
where we used that $\int_{\tilde e}\int_{\p \tilde F \backslash \tilde e} \frac{1}{|\tilde x-\tilde y|^q}\, d \tilde y\, d\tilde x$ is finite for $q<2$ and \eqref{inv1}. 

We extend $\tilde \kappa_1$ to $\tilde F$ using \eqref{righttraceinq}  and denote the extension by $\tilde \kappa_2 \in W^{1,q}(\tilde F)$ with the estimate:
\begin{equation}
    \|\tilde \kappa_2\|_{W^{1,q}(\tilde F)} \le C\|\tilde \kappa_1\|_{W^{1-1/q,q}(\p \tilde F)} \le C \|\tilde \kappa\|_{W^{1-1/q,q}(\tilde e)}.
\end{equation}
%where $\Bar{\kappa}_1$ is the image after lifting. 
Similarly, we extend $\tilde{\kappa}_2 \in W^{1,q}(\tilde F)$ by zero to $\p \tilde T$, which 
we denote by $\tilde \kappa_3$. Set $s=2/(2-q)$ then we see that using a Sobolev inequality $\|\tilde{\kappa}_2\|_{L^{qs}(\tilde F)}\le \SG{C} \|\tilde{\kappa}_2\|_{W^{1,q}(\tilde F)}$. 
Using definition of $W^{1-1/q,q}(\p \tilde T)$ 
and H\"older's inequality, we have:
\begin{equation*}
    \begin{split}
        \|\tilde \kappa_3\|_{W^{1-1/q,q}(\p \tilde T)}^q & = \|\tilde{\kappa}_2\|_{W^{1-1/q,q}(\tilde F)}^q + \int_{\tilde F}\int_{\p \tilde T \backslash \tilde F} \frac{|\tilde{\kappa}_2(\tilde x)|^q}{|\tilde x-\tilde y|^{q+1}}\, dA(y) dA(x) \\
        & \le \|\tilde{\kappa}_2\|_{W^{1-1/q,q}(\tilde F)}^q + \|\tilde{\kappa}_2\|_{L^{qs}(\tilde F)}^q \Big(\int_{\tilde F}\int_{\p \tilde T \backslash  \tilde F} \frac{1}{|x-y|^{(q+1)s'}}\, dA(y) dA(x) \Big)^{1/s'} \\
        & \le \|\tilde{\kappa}_2\|_{W^{1-1/q,q}(\tilde F)}^q + C \|\tilde{\kappa}_2\|_{L^{qs}(\tilde F)}^q \le C\|\tilde{\kappa}_2\|_{W^{1,q}(\tilde F)}^q,
    \end{split}
\end{equation*}
where we used \SG{$s'=2/q$} which implies $(q+1)s'=(q+1) \frac{2}{q} < 3 <4$ and hence the double integral is finite. 
 
Again we use \eqref{righttraceinq} to lift $\tilde \kappa_3$ to $\tilde T$ where we denote the lifting by $\tilde E \tilde \kappa \in W^{1,q}(\tilde T)$ and it has the estimate
\begin{equation*}
    \|\tilde E \tilde \kappa \|_{W^{1,q}(\tilde T)} \le C\|\tilde \kappa_3\|_{W^{1-1/q,q}(\p \tilde T)} \le C\|\tilde \kappa_2\|_{W^{1,q}(\tilde F)}\le C \|\tilde \kappa\|_{W^{1-1/q,q}(\tilde e)}.
\end{equation*}
 
Furthermore, we have
\[
 \|\tilde E \tilde \kappa\|_{W^{1,q}(\tilde F)} = \|\tilde{\kappa}_2\|_{W^{1,q}(\tilde F)} \le C \|\tilde \kappa\|_{W^{1-1/q,q}(\tilde e)}.
\]
\end{proof}

Using  Lemma \ref{lem:lift0} and Proposition \ref{prop:frac},
we now prove 
Lemma \ref{lem:lift}.
\begin{proof}[Proof of Lemma \ref{lem:lift}.]
Let $\hat \kappa\in \pol_{r-3}(\hat e)$,
and let $\tilde \kappa \in \pol_{r-3}(\tilde e)$ (with $\tilde e = \phi_{\hat T}^{-1}(\hat e)$)
be given as $\tilde \kappa = \hat \kappa \circ \phi_{\hat T}$.
By (ii) of Proposition \ref{prop:frac} there holds  %it holds that if $q<2$
\[ \|\hat \kappa\|_{W^{1-1/q,q}(e)} \ge C_3 \|\tilde \kappa\|_{W^{1-1/q,q}(\tilde e)}.
\]
By Lemma \ref{lem:lift0}, there exists $\tilde E: \pol_{r-3}(\tilde e) \rightarrow W^{1,q}(\tilde T)$ such that $(\tilde E \tilde \kappa)|_{\tilde e} = \tilde \kappa$ and  $(\tilde E \tilde \kappa)|_{\p \tilde F \backslash \tilde e} = 0$, $(\tilde E \tilde \kappa)|_{\p \tilde T \backslash \tilde F} = 0$. Let $E:\pol_{r-3}(e) \rightarrow W^{1,q}(\hat T)$ be defined by $(E \hat \kappa) = (\tilde E \tilde \kappa)\circ \phi_{\hat T}$. Then $E \hat \kappa|_{\hat e} = \hat \kappa$ $(E \hat \kappa)|_{\p \hat F \backslash \hat e} = 0$ , $( E \hat \kappa)|_{\p \hat T \backslash \hat F} = 0$ and with (iii) of Proposition \ref{prop:frac}, we have

\begin{equation*}
        \|E \hat \kappa\|_{W^{1,q}(\hat F)} \le C \|\tilde E \tilde \kappa\|_{W^{1,q}(\tilde F)} \le C \|\tilde \kappa\|_{W^{1-1/q,q}(\tilde e)} \le C
        \|\hat \kappa\|_{W^{1-1/q,q}(\hat e)},
\end{equation*}
and 
\begin{equation*}
        \|E \hat \kappa\|_{W^{1,q}(\hat T)} \le C\|\tilde E \tilde \kappa\|_{W^{1,q}(\tilde T)} \le C \|\tilde \kappa\|_{W^{1-1/q,q}(\tilde e)} \le C
        \|\hat \kappa\|_{W^{1-1/q,q}(\hat e)}.
\end{equation*}
\end{proof}

\end{document}